\newcommand{\Aref}[1]{\hyperref[A#1]{(A#1)}}
\newcommand{\Sref}[1]{\hyperref[S#1]{(S#1)}}
\title[On the behavior of free boundaries to generalized two-phase Stefan problems]{On the behavior of free boundaries to generalized two-phase Stefan problems for parabolic partial differential\\equation systems}
\authors{Toyohiko Aiki and Hana Kakiuchi}
\abstract{%
   Recently, we have proposed a new free boundary problem representing the bread baking process in a hot oven. Unknown functions in this problem are the position of the evaporation front, the temperature field, and the water content.  In solving this problem, we observed two difficulties: the growth rate of the free boundary depends on the water content, and the boundary condition for the water content involves the temperature. In this paper, by improving the regularity of solutions, we overcome these difficulties and establish the existence of a solution locally in time and its uniqueness. Moreover, under some sign conditions for initial data, we derive a result on the maximal interval of existence for solutions.}
\keywords{Free boundary problem, Stefan problem, global existence, behavior, bread baking process.}
\begin{document}
\section{Introduction}
\label{sec1}
On the bread baking process, several mathematical results have been investigated in \cite{MD, ZP, ZPP}. In particular, in \cite{PS}, the free boundary problem was proposed as a mathematical model for the process and considered in the enthalpy formulation. The enthalpy formulation allows the treatment of the multidimensional domains. However, it does not describe the behavior of the free boundary directly. Therefore, in our previous result  \cite{AK}, we derived the following one-dimensional free boundary problem and established existence and uniqueness theorems only for its approximation problems, due to certain mathematical difficulties that will be discussed later in this section. The aim of the present paper is to show the existence, uniqueness, and the behavior of the solutions to the free boundary problem under suitable assumptions. 
The problem is to find a triplet $(e, u, w)$ of the evaporation front $x=e(t) \text{ for } 0 < t < T$, the temperature field $u = u(t,x)$ and the water content $w=w(t,x)$ for $(t,x) \in Q(T) := (0,T) \times (0,1)$, where $T > 0$ is a given time, $t$ is the time variable and $x$ indicates the position. Here, we note that $u$ is given by $u = \theta - \theta_c$, where $\theta$ is the temperature and $\theta_c$ is a positive constant indicating the phase transition from water to air. In this model we assume that the bread occupies the one-dimensional interval $(0,1)$, and the common domain of $u$ and $w$ consists of the crumb region \[ Q_l(T, e) = \{ (t,x) \mid 0 < x < e(t) \text{ for } 0 < t < T\}\] and the crust region
\[
  Q_a(T, e) = \{ (t,x) \mid e(t) < x < 1 \text{ for } 0 < t < T \}.
\]
(see \cite{AK} for details). Also, the triplet $(e, u, w)$ satisfies:
 \begin{align}
 c_l \frac{\partial u}{\partial t} &= k_l \frac{\partial^2 u}{\partial x^2} \text{ in } Q_l(T, e), \quad c_a \frac{\partial u}{\partial t} = k_a \frac{\partial^2 u}{\partial x^2} \text{ in } Q_a(T, e),  \label{EQl}  \\
 \frac{\partial w}{\partial t} &= d_l \frac{\partial^2 w}{\partial x^2} \text{ in } Q_l(T, e),  \quad \frac{\partial w}{\partial t} = d_a \frac{\partial^2 w}{\partial x^2} \text{ in } Q_a(T, e), \label{EQa} \\
\frac{\partial u}{\partial x}(t,0) &= 0, \quad  u(t,e(t)) = 0 \text{ for } 0 < t < T,  \label{BCl} \\
 - k_a \frac{\partial u}{\partial x}(t,1) &= h( u(t,1) + \theta_c - u_b(t)) + \sigma( (u(t,1) + \theta_c)^4 - u_b(t)^4) \text{ for } 0 < t < T, \label{BCau} \\
 - d_a \frac{\partial w}{\partial x}(t,1) &= b_1p(u(t,1)+\theta) -  b_2 p(u_b(t)) \text{ for } 0 < t < T,  \label{BCaw} \\
 l w(t,e(t))  e'(t) &= k_l \frac{\partial u}{\partial x}(t, e(t)-) -   k_a \frac{\partial u}{\partial x}(t, e(t)+) \text{ for } 0 < t < T,  \label{FBP1} \\
   \frac{\partial w}{\partial x}(t, 0) &= 0,  \quad  d_l \frac{\partial w}{\partial x}(t, e(t)-) = d_a \frac{\partial w}{\partial x}(t, e(t)+) \text{ for } 0 < t < T,  \label{FBP2} \\
e(0) &= e_0, u(0) = u_0, w(0) = w_0 \text{ on } [0,1],\label{IC}
\end{align} 
where $c_l$ and $c_a$ are the specific heats, $k_l$ and $k_a$ are the thermal conductivities, and $d_l$ and $d_a$ are the water diffusion coefficients in the crumb and the crust regions, respectively. In addition, $h$ and $\sigma$ are non-negative constants corresponding to the heat transfer constant at the boundary at $x=1$ and the Stefan--Boltzman constant, respectively, $u_b$ is a given function on $[0,T]$ and indicates the temperature of the oven, $b_1$ and $b_2$ are positive constants and $p$ is a continuous function on $\mathbb{R}$. Moreover, $l$ is the latent heat, $e_0$ is the initial position of the free boundary, and $u_0$ and $w_0$ are the initial temperature field and the water content, respectively. Throughout this paper P denotes the system \eqref{EQl}-\eqref{IC}.
\par We refer  \cite{AK} for the physical background in detail. Here, we explain the derivation of the system \eqref{EQl}-\eqref{IC}, briefly. Equation \eqref{EQl} is the standard heat equation and equation \eqref{EQa} describes the conservation law of the mass of water. The homogeneous Neumann boundary conditions \eqref{BCl} and \eqref{FBP2} at $x=0$ follow from the symmetry of the bread, and the boundary conditions \eqref{BCau} and \eqref{BCaw} at $x=1$ are assumed in \cite{PS}. In this paper we call the equation \eqref{FBP1} the generalized Stefan condition, since the water content $w$ appears as the coefficient of the time derivative of the free boundary. Moreover, \eqref{FBP2} is called the transmission condition.
\par First, we show features of the system P as follows. As mentioned above, the generalized Stefan condition contains the water content on the free boundary in the coefficient of the growth rate. Accordingly, in order to discuss the solvability we need an estimate for the minimum value of the water content $w$. Also, the boundary value $u(\cdot,1)$ appears in the boundary condition for $w$. For existence of  strong solutions for the two-phase Stefan problem we suppose that $u_0 \in W^{1,2}(0,1)$ as usual. However, in this case the regularity $u_t(\cdot,1) \in L^2(0,T)$ is not guaranteed, and then  existence of a strong solution $w$ is not proved. To address these difficulties, in \cite{AK} we approximate the  boundary condition \eqref{BCaw} by applying the mollifier to $u(\cdot,1)$ and establish existence and uniqueness based on the fixed-point argument.
\par The first main result of this paper is to obtain strong solutions of P under the high regularity $u_0 \in W^{2,2}(0,1)$. In the proof, the uniform estimate for time difference is essential (see Lemma \ref{lem4}). Our proof is due to \cite{Ke1, Ai,AK}.
\par Moreover, we show a result on the behavior on the free boundary. To present the result, briefly, let $T^*$ be a maximal time for the existence of the solution. In our problem P one of the following conditions holds.
\begin{enumerate}
\item[(a)] $T^* = \infty$
\item[(b)] $T^* < \infty$, $e(t) \to 0$ as $t \uparrow T^*$
\item[(c)] $T^* < \infty$, $e(t) \to 1$ as $t \uparrow T^*$
\end{enumerate}
This type of behavior result was already obtained by \cite{MYY} and  \cite{Ke3}. To show the behavior of the free boundary we assume that the temperature $u_b$ in the oven is constant and the initial value $w_0$ is strictly positive on the domain $(0,1)$.
\section{Main Results}
\label{sec2}
First in this section, we define a solution to P  and a theorem concerned with the uniqueness and the existence of the solution.
\begin{definition}\label{def1}
For given time $T$ a triplet $(e, u, w)$ of functions $e$, $u$ and $w$ is a solution of P on $[0,T_0]$ for  $0 < T_0 \leq T$ if $(e, u, w)$  satisfies the following conditions:
\begin{enumerate}
\item[(S1)]\phantomsection\label{S1} $e \in W^{1, \infty}(0, T_0)$ and $0 < e < 1  \text{ on }  [0, T_0]$.
\item[(S2)]\phantomsection\label{S2} $u \in W^{1,2}(0,T_0;L^2(0, 1)) \cap L^{\infty} (0, T_0; W^{1,2}(0, 1))$, $u_{xx}  \in L^2(Q_l(T_0, e))$, $L^2(Q_a(T_0, e))$, and $u_x(\cdot, e(\cdot)\pm) \in L^{\infty}(0,T_0)$.
\item[(S3)]\phantomsection\label{S3} $u(\cdot, 1) \in W^{1,2}(0,T_0)$.
\item[(S4)]\phantomsection\label{S4} $w \in W^{1,2}(0,T_0;L^2(0, 1)) \cap L^{\infty} (0, T_0; W^{1,2}(0, 1))$, $w_{xx}  \in L^2(Q_l(T_0, e))$, $L^2(Q_a(T_0, e))$, $w(t, e(t)) \geq \delta_1 \text{ for any } t \in [0,T_0]$, where $\delta_1$ is some positive constant.
\item[(S5)]\phantomsection\label{S5} \eqref{EQl}-\eqref{IC} hold.
\end{enumerate}
\end{definition}
Before showing Theorem \ref{th1}, we state the assumptions.
\begin{enumerate}
\item[(A1)]\phantomsection\label{A1} $p \in C^1({\mathbb R})$, $0 \leq p, p' \leq M_p$ on ${\mathbb R}$,  where $M_p$ is a positive constant.
\item[(A2)]\phantomsection\label{A2} $0 <  e_0 < 1$, $u_0 \in W^{1,\infty}(0, 1)$,  $u_0 \geq 0$ on $[e_0,1]$, $u_0 \leq 0$ on  $[0,e_0]$.
\item[(A3)]\phantomsection\label{A3} $w_0  \in W^{1,2}(0,1)$, $w_0(e_0) > 0$.
\item[(A4)]\phantomsection\label{A4} $u_b \in W^{1,2}(0,T)$, $u_b \geq \theta_c \text{ on } [0,T]$.
\item[(A5)]\phantomsection\label{A5} $u_0 \in W^{2,2}(e_0,1)$, $u_{0x}(0) = 0$,\\
$-k_a u_{0x}(1) =  h( u_{0 }(1) + \theta - u_b(0)) + \sigma( (u_{0 }(1) + \theta)^4 - u_b(0)^4)$.
\end{enumerate}
\begin{theorem}\label{th1}
 Assume \Aref{1}--\Aref{5}. Then  P has at least one solution $(e, u, w)$ on $[0,T_0]$ for some $T_0 \in (0,T]$. Also, for any $0 < T_0 \leq T$, P admits at most one solution on $[0,T_0]$  in the sense of Definition \ref{def1}.
\end{theorem} %%%%%%%%%%%%%%%%%%%%%%%%%%%%%%%%%%%%%%%%%%%%%%%%%%%%%%%%%%%%%%%%%%%%%%%%%%%%%%%%%%%%%%%%%%%%%%%%%%%%%%%
\begin{remark}\label{re1-1}
 The physical relevance of \Aref{1}--\Aref{5} is as follows. The function $p = p(\theta)$ indicates the water vapor pressure at temperature $\theta$, namely, its specific form is
\begin{equation*}
p(\theta) = k_p \exp \left(A_p - \frac{B_p}{\theta + C_p}\right) \text{ for } \theta>0,
\end{equation*}
where $k_p$, $A_p$, $B_p$ and $C_p$ are positive constants. Accordingly, \Aref{1} is physically valid. In this paper, we assume that there exist crumb, crust and evaporation front at the initial. Under this assumption, \Aref{2} holds. Since we consider that every crumbs and crusts contain a non-zero amount of water, \Aref{3} is satisfied. On \Aref{4}, when we bake bread, the oven temperature is about $200^{\circ}$C, that is, the temperature $u_b$ oh the hot air is greater than $\theta_c$. Finally, \Aref{5} is imposed for mathematical reasons.

\end{remark}
\begin{remark}\label{re1}
 We already established existence and uniqueness of solutions to approximate problems under \Aref{1}--\Aref{4} in \cite{AK}. In this paper by assuming \Aref{5}, we succeed to obtain uniform estimates for the time difference $\frac{u(t) - u(t - \Delta t)}{\Delta t}$ with $\Delta t > 0$, which leads to the regularity $u(\cdot,1) \in W^{1,2}(0,T)$. This regularity allows us to prove existence of a function $w$ satisfying the water diffusion system. However, due to dependence on the free boundary, it is not easy to show estimates for the difference of solutions to the water diffusion system. In order to overcome this difficulty, we employ the $L^{\infty}$-estimate for $u_x$ given in \cite{LSU}. Furthermore, the application of the Gagliardo-Nirenberg inequality is also the key of our proof to Theorem \ref{th1} (see Lemma \ref{lemma101})
\end{remark}
\begin{definition}\label{def-max}
 For $T^* \leq \infty$, we call that $[0,T^*)$ is the maximal interval of existence of the solution of P, if there exists a solution on $[0,T]$ for any $0 < T < T^*$, and do not exist on $ [0,T^*]$ for $T^* < \infty$. For $T^* = \infty$, P has a solution on $[0,T]$ for any $T>0$.
\end{definition}
\begin{theorem}\label{th2}
 Assume \Aref{1}--\Aref{3} and \Aref{5}. If $b_1 \leq b_2$, $u_b$ is a positive constant such that $u_b \geq u_0 + \theta_c$, and $w_0 > 0$ on $[0,1]$. For $0 < T^* \leq \infty$, let $[0,T^*)$ be the maximal interval of existence of the solution $(u, w, e)$ to P. Then, one and only one of the following cases (a), (b) and (c) always hold:
\begin{enumerate}
\item[(a)] $T^* = \infty$.
\item[(b)] $T^* < \infty$ and $\displaystyle \lim_{t \uparrow T^*}e(t) = 0$.
\item[(c)] $T^* < \infty$ and $\displaystyle \lim_{t \uparrow T^*}e(t) = 1$.
\end{enumerate}
\end{theorem}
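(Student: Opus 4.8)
The plan is to prove the trichotomy by a continuation argument built on Theorem \ref{th1}, whose local existence time depends only on suitable norms of the data. Since (a) is by definition incompatible with $T^* < \infty$, and the limiting values $0$ and $1$ in (b) and (c) are distinct, the ``one and only one'' assertion reduces to the following: whenever $T^* < \infty$, the limit $\lim_{t \uparrow T^*} e(t)$ exists and equals $0$ or $1$. I would therefore assume $T^* < \infty$ and argue by contradiction that $e$ cannot remain bounded away from both fixed endpoints as $t \uparrow T^*$; that is, I would suppose there exist $\varepsilon \in (0,1)$ and $t_0 < T^*$ with $\varepsilon \le e(t) \le 1 - \varepsilon$ for all $t \in [t_0, T^*)$, and derive from this a solution on an interval strictly containing $[0,T^*]$, contradicting the maximality in Definition \ref{def-max}.

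The heart of the matter is a set of a priori bounds on $[t_0,T^*)$ that are uniform in $t$ and depend only on $\varepsilon$ and the data. Using that $u_b$ is a positive constant with $u_b \ge u_0 + \theta_c$, together with the sign structure in (A2), the comparison principle gives $0 \le u \le u_b - \theta_c$, with $u \le 0$ on the crumb side and $u \ge 0$ on the crust side, so that $\|u\|_{L^\infty}$ is controlled independently of $t$. Next, the hypotheses $b_1 \le b_2$ and $p' \ge 0$ from (A1), combined with $u(\cdot,1) \le u_b - \theta_c$, make the boundary flux in \eqref{BCaw} have a favorable sign; a maximum-principle argument then keeps $w$ bounded and, starting from $w_0 > 0$, yields a uniform lower bound $w(t,e(t)) \ge \delta_1 > 0$. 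With $w$ bounded below and the $L^\infty$-gradient estimate of \cite{LSU} furnishing a uniform bound on $u_x(t,e(t)\pm)$ as long as $e \in [\varepsilon, 1-\varepsilon]$, the generalized Stefan condition \eqref{FBP1} forces $|e'(t)| \le C$ on a neighborhood of any such time, so that $e$ is uniformly Lipschitz up to $T^*$ and $e^* := \lim_{t \uparrow T^*} e(t)$ exists and lies in $[\varepsilon, 1-\varepsilon]$.

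Equipped with these bounds, I would restart the problem from a time $s \in [t_0,T^*)$ close to $T^*$ with data $(e(s), u(s), w(s))$. The uniform estimates show that, for a.e.\ such $s$, this data satisfies (A1)--(A5) with constants independent of $s$: the sign conditions propagate to give (A2) at $t=s$, the positivity of $w$ gives (A3), the boundary relation \eqref{BCau} holds identically and hence supplies the compatibility condition in (A5), while the improved regularity $u(\cdot,1) \in W^{1,2}$ from Lemma \ref{lem4} and the Gagliardo--Nirenberg step of Lemma \ref{lemma101} are used to bound the $W^{2,2}$-norm of $u(s)$ on the crust uniformly. Theorem \ref{th1} then produces a solution on $[s, s+\tau]$ with $\tau > 0$ depending only on $\varepsilon$ and the data, not on $s$. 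Choosing $s > T^* - \tau$ and gluing the two solutions along $t=s$ yields a solution on $[0, s+\tau]$ with $s + \tau > T^*$, contradicting the definition of $T^*$. Hence the band assumption fails, so $\liminf_{t \uparrow T^*} e(t) = 0$ or $\limsup_{t \uparrow T^*} e(t) = 1$. To upgrade this to a genuine limit, I would note that on any compact subinterval $[\varepsilon, 1-\varepsilon] \subset (0,1)$ the bound $|e'| \le C$ holds; thus $e$ can traverse such a band only finitely many times in the finite remaining time $T^*-t_0$, so $e$ cannot oscillate between the two endpoints and $\lim_{t \uparrow T^*} e(t)$ exists and equals $0$ (case (b)) or $1$ (case (c)).

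I expect the main obstacle to be the second step, namely securing the a priori estimates uniformly in $t$ up to $T^*$. The most delicate points are the strictly positive lower bound $w(t,e(t)) \ge \delta_1$ (essential because $w$ appears as the coefficient of $e'$ in \eqref{FBP1}, so that any degeneration there would destroy the control of the free boundary speed) and the reconstruction, with non-degenerating constants, of the regularity and compatibility required by (A5) for the restart. These are precisely the quantities that can fail to stay bounded when $e$ approaches $0$ or $1$ and the corresponding phase region collapses, which is exactly why those two scenarios are the only surviving alternatives to global existence.
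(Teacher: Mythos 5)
Your overall architecture --- a uniform lower bound on $w$ from $b_1\le b_2$ and $u+\theta_c\le u_b$, control of $e'$ through the Stefan condition, and a restart via Theorem \ref{th1} to contradict maximality --- is the same as the paper's (Lemmas \ref{lem500}, \ref{lem5}, \ref{lem6}, \ref{lem8}, \ref{lem7}). The genuine gap is in how you control $e'$ and, consequently, in how you exclude oscillation of $e(t)$ as $t\uparrow T^*$. You invoke a pointwise-in-time bound ``$|e'(t)|\le C(\varepsilon)$ as long as $e(t)\in[\varepsilon,1-\varepsilon]$'', obtained from the $L^\infty$ estimate for $u_x(t,e(t)\pm)$. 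But that gradient estimate (see \eqref{maru} in Lemma \ref{lemma100}, which goes back to \cite{LSU} and \cite{AK}) is proved for $e\in K(\delta,e_0,M,T)$, i.e.\ it requires $e$ to stay in $[\delta,1-\delta]$ on the \emph{whole} time interval and to satisfy an a priori bound $\int_0^T|e'|^3\,dt\le M$; it is not available at an instant $t$ at which $e(t)$ happens to lie in the band if $e$ has approached $0$ or $1$ at earlier times (the fixed-domain change of variables degenerates there), and using it to bound $e'$ while presupposing a bound on $\int|e'|^3$ is circular unless that integral bound is produced independently. Since this conditional Lipschitz bound is your only mechanism for the final step --- ruling out, say, $\liminf_{t\uparrow T^*}e=0$ together with $\limsup_{t\uparrow T^*}e>0$, so that the limit actually exists --- the trichotomy is not established as written.

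The paper closes exactly this hole with an unconditional energy estimate (Lemma \ref{lem6}): testing \eqref{EQl} with $k_lu_t$ and $k_au_t$ and rewriting the interface terms via \eqref{FBP1}, the lower bound $w(\cdot,e)\ge\delta_w$ of Lemma \ref{lem5} yields $\int_0^{T^*}|e'|^3\,dt\le C_1$ with a constant that does not depend on the position of $e$. This gives $|e(t)-e(t')|\le C_1|t-t'|^{2/3}$ up to $T^*$, so $e(t)$ is Cauchy and the limit $e_*$ always exists when $T^*<\infty$; no separate oscillation argument is needed, and the extension argument only has to treat the case $e_*\in(0,1)$. If you replace your conditional gradient-based bound on $e'$ by this energy estimate (which needs only $w\ge\delta_w$, not $e$ bounded away from the endpoints), the remainder of your plan goes through and essentially coincides with the paper's proof.
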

In our proofs of Theorems \ref{th1} and \ref{th2}, Proposition \ref{prop1} which guarantees the integrability of the time derivative for the boundary function plays a very important role. %In order to prove it we introduce an initial boundary value problem IP($\delta, u_0$) given by 
\begin{proposition}\label{prop1}
 Under the same assumption as in Theorem \ref{th1}, let $(e,u,w)$ be a solution of P on $[0,T]$. Then, $u(\cdot, 1) \in W^{1,2}(0,T)$. Moreover, if $e \leq 1-\delta$ on $[0,T]$ for some $\delta > 0$, then there exists a positive constant $R_b$ depending only on $\delta > 0$, $\left|u_b\right|_{W^{1,2}(0,T)}$, $\left|u_{0xx}\right|_{L^{2}(1-\delta,1)}$ and $\left|u_0\right|_{W^{1,2}(0,1)}$ such that 
\begin{equation*}
\left|u(\cdot,1)\right|_{W^{1,2}(0,T)} \leq R_b \text{ and } \left|u_t(t)\right|_{L^2(1-\delta / 2,1)} \leq R_b \text{ for } 0 \leq t \leq T.
\end{equation*}
\end{proposition}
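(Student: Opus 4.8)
The plan is to localize the estimate near $x=1$. Since $e\le 1-\delta$ on $[0,T]$, the fixed interval $(1-\delta,1)$ lies entirely in the crust $Q_a(T,e)$ for every $t$, so there $u$ solves the \emph{linear} heat equation $c_a u_t=k_a u_{xx}$ (part of \eqref{EQl}); the only nonlinearity is the Robin-type condition \eqref{BCau} at $x=1$, and no free-boundary term enters. Fix a cutoff $\zeta\in C^\infty([0,1])$ with $\zeta\equiv 0$ on $[0,1-\delta]$, $\zeta\equiv 1$ on $[1-\delta/2,1]$ and $0\le\zeta\le1$. Because differentiating the solution in $t$ is not a priori justified, I would work with the time-difference quotient $U_\tau(t):=(u(t)-u(t-\tau))/\tau$ for small $\tau>0$, which satisfies $c_a (U_\tau)_t=k_a (U_\tau)_{xx}$ on $(1-\delta,1)$ together with the differenced form of \eqref{BCau}. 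A mean-value computation turns the quartic term into $4\xi^3 U_\tau(t,1)$ with $\xi$ lying between $u(t-\tau,1)+\theta_c$ and $u(t,1)+\theta_c$; here I would use that $u\ge0$ on the crust (the sign structure in (A2) propagated by the maximum principle), so that $u+\theta_c\ge\theta_c>0$ and this coefficient is bounded below by $4\theta_c^3>0$, and likewise bound the $u_b^4$-difference by $C|u_b(t)-u_b(t-\tau)|/\tau$ using $u_b\in W^{1,2}(0,T)\hookrightarrow C([0,T])$.

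The core step is the energy estimate. Multiplying the equation for $U_\tau$ by $U_\tau\zeta^2$ and integrating over $(1-\delta,1)$, the contribution at $x=1-\delta$ vanishes since $\zeta(1-\delta)=0$, while integration by parts produces the boundary term $k_a(U_\tau)_x(t,1)U_\tau(t,1)$. The differenced \eqref{BCau} converts this into the dissipative terms $-(h+4\sigma\xi^3)U_\tau(t,1)^2$ plus a forcing of the form $\big(h+4\sigma\eta^3\big)(u_b)_\tau(t)\,U_\tau(t,1)$. Using Young's inequality I would absorb the forcing into $\tfrac h2 U_\tau(t,1)^2$, and absorb the interior cross term $2k_a\int (U_\tau)_x U_\tau\zeta\zeta_x$ into $\tfrac{k_a}{2}\int (U_\tau)_x^2\zeta^2$, at the cost of a lower-order term $C\int_{1-\delta}^{1-\delta/2}U_\tau^2\,dx$ (where $\zeta_x\neq0$). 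Integrating in time then gives, uniformly in $\tau$,
\begin{equation*}
\sup_{0\le t\le T}\int_{1-\delta}^{1}U_\tau(t)^2\zeta^2\,dx + \int_0^T\!\!\int_{1-\delta}^{1}(U_\tau)_x^2\zeta^2\,dx\,dt + \int_0^T U_\tau(t,1)^2\,dt \;\le\; C\Big(|u_{0xx}|_{L^2(1-\delta,1)}^2 + |u_b|_{W^{1,2}(0,T)}^2 + \int_0^T\!\!\int_{1-\delta}^{1-\delta/2}U_\tau^2\,dx\,dt\Big),
\end{equation*}
and letting $\tau\downarrow0$ yields the same bound with $U_\tau$ replaced by $u_t$.

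It remains to control the three terms on the right. The initial energy is handled by evaluating the equation at $t=0$: on $(1-\delta,1)\subset(e_0,1)$ one has $u_t(0)=(k_a/c_a)u_{0xx}$, so it is bounded by $|u_{0xx}|_{L^2(1-\delta,1)}$, and the compatibility condition in (A5) is exactly what guarantees that the differenced boundary condition remains bounded as $t\downarrow0$ (no initial boundary layer). The forcing term is part of $|u_b|_{W^{1,2}(0,T)}^2$. Finally the lower-order term $\int_0^T\!\int_{1-\delta}^{1-\delta/2}u_t^2$ I would bound by the global space–time estimate $|u_t|_{L^2(Q(T))}^2\le C\big(|u_0|_{W^{1,2}(0,1)},|u_b|_{W^{1,2}(0,T)}\big)$ coming from the basic first-order energy estimate for the solution; this is precisely why $|u_0|_{W^{1,2}(0,1)}$ appears in $R_b$. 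Collecting the bounds gives $\sup_t|u_t(t)|_{L^2(1-\delta/2,1)}\le R_b$ (since $\zeta\equiv1$ there), which is the second asserted inequality, and the uniform bound on $\int_0^T U_\tau(t,1)^2\,dt$ shows that the difference quotients of $t\mapsto u(t,1)$ are bounded in $L^2(0,T)$, whence $u(\cdot,1)\in W^{1,2}(0,T)$ with $|u(\cdot,1)|_{W^{1,2}(0,T)}\le R_b$ (the $L^2$-part being controlled by $u\in L^\infty(0,T;W^{1,2}(0,1))\hookrightarrow L^\infty(Q(T))$).

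The main obstacle is the lower-order term $\int_{1-\delta}^{1-\delta/2}U_\tau^2$: because the cutoff $\zeta$ vanishes on part of this strip, it cannot be absorbed by the dissipative $\int(U_\tau)_x^2\zeta^2$ and must instead be fed by the external global $L^2(Q(T))$ bound on $u_t$. The two supporting points that make the scheme work are the dissipative sign of the quartic boundary term, which relies on $u\ge0$ on the crust, and the finiteness of the initial energy, which relies on the compatibility condition in (A5); both are where the hypotheses of Theorem \ref{th1} are genuinely used.
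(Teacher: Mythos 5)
Your proposal follows essentially the same route as the paper: localize to the fixed crust strip $(1-\delta,1)$ with a cutoff $\zeta$ vanishing on $[0,1-\delta]$, run an energy estimate on time-difference quotients, use the monotone (dissipative) structure of the radiative boundary condition at $x=1$, invoke the compatibility condition in (A5) to kill the initial layer so that only $|u_{0xx}|_{L^2(1-\delta,1)}$ enters, absorb the lower-order terms where $\zeta_x\neq 0$ via the global bound $u_t\in L^2(Q(T))$ (hence $|u_0|_{W^{1,2}(0,1)}$ in $R_b$), and recover $u_t(\cdot,1)\in L^2(0,T)$ by a trace argument. The only substantive difference is technical: the paper works with $\tilde u=\zeta u$ and first builds smooth approximations $u_{0n}$, $u_n$ and auxiliary problems (Lemmas \ref{lem1}--\ref{lem2}) so that the difference-quotient computation and the backward-in-time extension \eqref{star} are rigorously justified, whereas you perform the computation directly on $u$ (in particular, ``evaluating the equation at $t=0$'' to define the initial energy needs exactly this kind of regularization to be made precise).
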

We shall prove this proposition in Section \ref{sec3}.
\rm \section{\text{Known results}} 
\label{sec10}
First, we give a list of useful inequalities.
\begin{lemma}\label{lemgag}
 For $a_1 < a_2$, there exists a positive constant $C_0 = C_0(a_1,a_2)$ such that
\begin{align*}
&\left|z\right| \leq C_0( \left|z_x\right|_{L^2(a_1,a_2)}^{1/2}\left|z\right|_{L^2(a_1,a_2)}^{1/2} + \left|z\right|_{L^2(a_1, a_2)}) \text{ on } [a_1, a_2]  \text{ for } z \in W^{1,2}(a_1,a_2),\\
&\left|z(a_2)\right| \leq C_0 \left|z_x\right|_{L^2(a_1,a_2)}^{1/2}\left|z\right|_{L^2(a_1,a_2)}^{1/2} \text{ for } z \in W^{1,2}(a_1,a_2) \text{ with } z(a_1)=0.
\end{align*}
\end{lemma}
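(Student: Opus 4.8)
The plan is to establish the bound on $u(\cdot,1)$ by deriving a uniform estimate for the difference quotient $\frac{u(t)-u(t-\Delta t)}{\Delta t}$ near the boundary $x=1$, in the spirit of the ``uniform estimate for time difference'' mentioned in Remark \ref{re1}. Since the assumption $e \leq 1-\delta$ keeps the free boundary uniformly away from $x=1$, the heat equation \eqref{EQa} holds on the fixed strip $(0,T)\times(1-\delta,1)$, so near $x=1$ we are dealing with a \emph{fixed-domain} parabolic problem $c_a u_t = k_a u_{xx}$ with the nonlinear Robin condition \eqref{BCau} at $x=1$. This decouples the boundary regularity question from the moving-interface difficulties.

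First I would introduce, for small $\Delta t>0$, the time difference $v(t,x) = u(t,x) - u(t-\Delta t,x)$ on the strip $x \in (1-\delta,1)$, which solves the same heat equation $c_a v_t = k_a v_{xx}$ there. I would test this equation with $v$ (or with a suitable cutoff-weighted version, using a smooth cutoff $\eta$ with $\eta=1$ near $x=1$ and $\eta=0$ near $x=1-\delta$, so as not to be disturbed by the interior behaviour near the free boundary). Integration by parts produces the boundary term at $x=1$ coming from $v_x(t,1)$; using the Robin condition \eqref{BCau} differenced in time, $-k_a v_x(t,1) = h\,v(t,1) + \sigma\bigl((u(t,1)+\theta_c)^4 - (u(t-\Delta t,1)+\theta_c)^4\bigr) - \bigl(h + 4\sigma\,\xi^3\bigr)\Delta u_b$, and the monotonicity of $s \mapsto hs + \sigma(s+\theta_c)^4$, the $h v(t,1)^2$ and quartic boundary contributions have a favourable sign and can be controlled. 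The forcing from $\Delta u_b = u_b(t)-u_b(t-\Delta t)$ contributes a term estimated by $|u_b|_{W^{1,2}(0,T)}$ after dividing by $\Delta t$ and integrating in $t$.

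The second step is to convert these energy estimates for $v/\Delta t$ into the two claimed bounds. Dividing by $(\Delta t)^2$, integrating in $t$ over $(0,T)$, and passing to the limit $\Delta t \to 0$ yields $u_t \in L^2((0,T)\times(1-\delta/2,1))$ together with $\int_0^T |u_t(t)|_{L^2(1-\delta/2,1)}^2\,dt \leq C$; a more careful argument, testing against $v_t/\Delta t$ or invoking the $L^\infty(0,T;L^2)$ part of the energy identity, upgrades this to the pointwise-in-time bound $|u_t(t)|_{L^2(1-\delta/2,1)} \leq R_b$. For the trace $u(\cdot,1)$, I would combine the control of $u_t$ and $u_{xx}$ on the strip with the trace form of the Gagliardo--Nirenberg inequality (Lemma \ref{lemgag}, second inequality, applied to $u_t$ on $(1-\delta/2,1)$ after subtracting its value at $1-\delta/2$): this bounds $|u_t(t,1)|$ by $|u_{xt}(t)|_{L^2}^{1/2}|u_t(t)|_{L^2}^{1/2}$-type quantities, which are integrable in the required norm. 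The initial data appear through $|u_{0xx}|_{L^2(1-\delta,1)}$ and $|u_0|_{W^{1,2}(0,1)}$ precisely because the $\Delta t \to 0$ limit of the difference quotient at $t=0$ is governed by $u_t(0) = \frac{k_a}{c_a}u_{0xx}$ on the strip, making these the natural constants in $R_b$, consistent with (A5).

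The main obstacle will be handling the nonlinear quartic term in \eqref{BCau} when differencing in time: the factor $4\sigma\xi^3$ arising from the mean-value theorem involves an intermediate value $\xi$ between $u(t,1)+\theta_c$ and $u(t-\Delta t,1)+\theta_c$, so I first need an a priori $L^\infty$ bound on $u(\cdot,1)$ to keep this factor bounded. I would obtain such a bound from a maximum-principle or energy argument for $u$ itself on the strip, using \eqref{BCau} and the assumption $u_b \in W^{1,2}(0,T) \subset L^\infty(0,T)$; the sign structure (the boundary operator is monotone increasing) makes the quartic term an ally rather than an enemy once its coefficient is bounded. A secondary technical point is ensuring the cutoff commutator terms, which transfer information from the interior region near the free boundary onto the strip, are absorbed using only $|u_0|_{W^{1,2}(0,1)}$ and the already-available $L^\infty(0,T;W^{1,2})$ regularity from (S2); this is where keeping $e \leq 1-\delta$ is essential, since it guarantees $u_{xx}\in L^2$ up to $x=1-\delta$ without interference from the interface.
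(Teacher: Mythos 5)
Your proposal does not prove the statement at hand. The statement is Lemma \ref{lemgag}, an elementary one-dimensional interpolation and trace inequality for functions $z \in W^{1,2}(a_1,a_2)$: it involves no parabolic equation, no free boundary, no time variable, and none of the structures \eqref{EQa}, \eqref{BCau} that your argument revolves around. What you have sketched is instead a proof of Proposition \ref{prop1} (the $W^{1,2}(0,T)$ regularity of the trace $u(\cdot,1)$ via time-difference quotients on the strip $(1-\delta,1)$) --- which, incidentally, is close in spirit to what the paper actually does in Section \ref{sec3} with the cutoff $\zeta$, the extension \eqref{star}, and Lemmas \ref{lem1}--\ref{lem4}, but it is a different statement. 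Worse, read as a proof of Lemma \ref{lemgag} your text is circular: in your second step you explicitly invoke ``the trace form of the Gagliardo--Nirenberg inequality (Lemma \ref{lemgag}, second inequality)'' as an ingredient, and the paper likewise uses Lemma \ref{lemgag} \emph{inside} the proofs of Lemmas \ref{lem3} and \ref{lem4}. So nothing in your proposal establishes the two inequalities that were to be proved.

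For the record, the paper gives no proof at all --- Lemma \ref{lemgag} is listed under ``Known results'' --- and the correct argument is a few lines. For $z \in W^{1,2}(a_1,a_2)$ and any $x,y \in [a_1,a_2]$, the identity $z(x)^2 = z(y)^2 + 2\int_y^x z z_\xi \, d\xi$ together with the Cauchy--Schwarz inequality gives $z(x)^2 \leq z(y)^2 + 2|z_x|_{L^2(a_1,a_2)}|z|_{L^2(a_1,a_2)}$; averaging over $y \in (a_1,a_2)$ yields
\begin{equation*}
z(x)^2 \leq \frac{1}{a_2-a_1}|z|_{L^2(a_1,a_2)}^2 + 2|z_x|_{L^2(a_1,a_2)}|z|_{L^2(a_1,a_2)},
\end{equation*}
and taking square roots with $\sqrt{A+B} \leq \sqrt{A}+\sqrt{B}$ gives the first inequality with $C_0 = \max\{\sqrt{2}, (a_2-a_1)^{-1/2}\}$. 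If moreover $z(a_1)=0$, choosing $y=a_1$ gives directly $z(a_2)^2 = 2\int_{a_1}^{a_2} z z_x \, dx \leq 2|z_x|_{L^2(a_1,a_2)}|z|_{L^2(a_1,a_2)}$, which is the second inequality with the constant $\sqrt{2}$, in fact independent of $a_1, a_2$. If you want to salvage your draft, it belongs with Proposition \ref{prop1}, where the key ideas you list (uniform difference-quotient estimates, monotonicity of the Robin operator, the $L^\infty$ bound on $\tilde{u}_n(\cdot,1)$ from Lemma \ref{lem3}, and the role of (A5) in controlling the quotient at $t=0$) do match the paper's strategy.
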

Next, we consider auxiliary problems
\begin{gather*}
    \text{AP}(e)
    := \{ \eqref{EQl}-\eqref{BCaw}, \eqref{FBP2}, \eqref{IC} \}, 
    \\
    \text{AP1}(e, u_0)
    := \{ \eqref{EQl}, \eqref{BCl}, \eqref{BCau}, \eqref{mazda} \},
    \quad \text{and} \quad
    \text{AP2}(e, b)
    := \{ \eqref{EQa}, \eqref{honda}, \eqref{toyota} \}
\end{gather*}
for given $e \in C([0,T])$ with $0 < e < 1$ on $[0,T]$ and $b \in C([0,T])$ as a preliminary step. Here, \eqref{mazda}, \eqref{honda} and \eqref{toyota} are as follows:
\begin{align}
u(0) &= u_0 \text{ on } [0,1], \label{mazda}\\
\frac{\partial w}{\partial x}(t,0) &= 0, \quad - d_a \frac{\partial w}{\partial x}(t,1) = b_1p(b(t)) - b_2 p(u_b(t)) \text{ for } 0 < t < T, \label{toyota}\\
w(0) &= w_0 \text{ on } [0,1]. \label{honda}
\end{align}
For the problems AP1($e, u_0$) and AP2($e,b$) we have already obtained the following lemmas.
\begin{lemma}\label{prop3-1}
(cf. \cite[Proposition 3.1 and Lemma 3.8]{AK}) Let $T>0$ and $e \in W^{1,2}(0,T)$ satisfying $0<e<1 \text{ on } [0,T]$. If \Aref{2} and \Aref{4} hold, then AP1($e, u_0$) has one and only one solution $u$ satisfying \Sref{2}. Moreover, $u \leq 0$ in $Q_l(T,e)$ and $u \geq 0$ in $Q_a(T,e)$.
\end{lemma}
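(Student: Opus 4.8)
The final statement to prove is Lemma \ref{prop3-1}, concerning the auxiliary problem AP1($e,u_0$), which is the heat system \eqref{EQl} posed on the fixed-in-form two-phase domain determined by a \emph{given} free boundary $e\in W^{1,2}(0,T)$, together with the boundary data \eqref{BCl}, \eqref{BCau} and the initial condition \eqref{mazda}. We must establish (i) existence and uniqueness of a solution $u$ with the regularity (S2), and (ii) the sign properties $u\le 0$ in $Q_l(T,e)$ and $u\ge 0$ in $Q_a(T,e)$.

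\medskip

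\emph{Plan for existence and uniqueness.} Since $e$ is prescribed here (it is not an unknown), the problem decouples into a genuine linear parabolic transmission problem: find $u$ solving the two heat equations in $Q_l(T,e)$ and $Q_a(T,e)$, with the Neumann condition $u_x(t,0)=0$, the interface condition $u(t,e(t))=0$, and the nonlinear Robin--Stefan-Boltzmann condition \eqref{BCau} at $x=1$. The natural approach is to first flatten the moving interface by a change of variables $y=x/e(t)$ on the crumb side and $y=(x-e(t))/(1-e(t))$ on the crust side, which transforms the time-dependent domains into fixed cylinders at the cost of introducing lower-order, $t$-dependent coefficients (controlled because $e\in W^{1,2}(0,T)$ with $0<e<1$). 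I would then set up the weak/variational formulation on the fixed domain, treat the boundary nonlinearity in \eqref{BCau} via monotonicity (the map $r\mapsto hr+\sigma((r+\theta_c)^4-u_b^4)$ is monotone increasing after truncation), and invoke the standard theory of nonlinear parabolic equations (e.g.\ the Galerkin method or the abstract theory of maximal monotone operators, as in \cite{Ke1} and \cite{Ai}) to obtain a unique weak solution. Uniqueness follows by taking the difference of two solutions, testing with the difference, and using the monotonicity of the boundary term together with Gronwall's inequality.

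\medskip

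\emph{Plan for the regularity (S2).} To upgrade the weak solution to the strong regularity demanded by (S2)---namely $u\in W^{1,2}(0,T_0;L^2(0,1))\cap L^\infty(0,T_0;W^{1,2}(0,1))$, $u_{xx}\in L^2(Q_l)\cap L^2(Q_a)$, and the trace bound $u_x(\cdot,e(\cdot)\pm)\in L^\infty(0,T_0)$---I would derive the standard energy estimates: testing the equation with $u_t$ gives control of $|u_x|_{L^\infty(0,T;L^2)}$ and $|u_t|_{L^2(Q)}$, after which the elliptic regularity for the spatial operator on each subdomain yields $u_{xx}\in L^2$. The flux bound $u_x(\cdot,e(\cdot)\pm)\in L^\infty$ is then obtained from the interior regularity together with the trace/interpolation inequalities of Lemma \ref{lemgag}. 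Here I would rely on the cited results \cite{AK} and \cite{LSU}, since this is essentially the regularity theory already developed there.

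\medskip

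\emph{Plan for the sign properties and the main obstacle.} The inequalities $u\le 0$ in $Q_l$ and $u\ge 0$ in $Q_a$ should follow from a maximum-principle / comparison argument. On the crust side $Q_a$, the interface value is $u(t,e(t))=0$ by \eqref{BCl}, and at $x=1$ the boundary flux condition \eqref{BCau} is consistent with nonnegativity provided $u_b\ge\theta_c$ (assumption (A4)): testing with the negative part $u^-=\max(-u,0)$ on $Q_a$, the boundary term carries a favorable sign because the monotone map sends $0$ to a value bounded below appropriately, and the interface contributes nothing since $u=0$ there; Gronwall then forces $u^-\equiv 0$ on $Q_a$. The analogous argument with $u^+$ on $Q_l$, where the initial data satisfies $u_0\le 0$ on $[0,e_0]$ by (A2) and the Neumann and interface conditions are sign-neutral, gives $u\le 0$ in $Q_l$. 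The main obstacle I anticipate is controlling the nonlinear Stefan-Boltzmann boundary term in \eqref{BCau} while keeping the sign argument clean: the quartic term is only locally Lipschitz, so one must either truncate $u$ at the outset to make the boundary operator globally monotone and bounded (then remove the truncation a posteriori using an $L^\infty$ bound on $u$), or establish an a priori $L^\infty$ estimate first. Reconciling this truncation with the sign test functions $u^\pm$, and ensuring the boundary flux keeps the correct sign across the truncation, is the delicate point; this is precisely where I expect to lean most heavily on the framework of \cite{AK}.
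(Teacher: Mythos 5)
Your plan follows essentially the same route as the paper and its antecedent \cite{AK}: flatten the moving interface by $y=x/e(t)$ on the crumb side and the affine map on the crust side, treat the radiative boundary condition as a monotone operator after truncating with the positive part --- exactly the device $g_+$ in \eqref{second} --- obtain (S2) by the standard $u_t$-energy estimate, and get the sign properties by testing with $u^{\pm}$, where (A4) ($u_b \geq \theta_c$) makes the truncated boundary term favorably signed, which is also how the paper argues in Lemma \ref{lem500}. One caution on a detail you hedge: the bound $u_x(\cdot,e(\cdot)\pm)\in L^{\infty}(0,T)$ cannot come from Lemma \ref{lemgag}-type interpolation alone (that would require $u_{xx}(t)$ bounded in $L^2(0,1)$ uniformly in $t$, which the energy estimates do not give), but rather from the parabolic $L^{\infty}$ gradient estimate of \cite{LSU}, as the paper indicates in Remark \ref{re1} and uses via \eqref{maru}.
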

\begin{lemma}\label{prop3-2}
 Let $T>0$, $e \in W^{1,2}(0,T)$ with $0<e<1 \text{ on } [0,T]$ and $w_0 \in W^{1,2}(0,1)$. If $b \in W^{1,2}(0,T)$, then AP2($e, b$) has a unique solution
\[
w \in W^{1,2}(0,T;L^2(0, 1)) \cap L^{\infty} (0, T; W^{1,2}(0,1)),
\quad
w_{xx} \in L^2(Q_l(T, e)),\; L^2(Q_a(T, e)).
\]
\end{lemma}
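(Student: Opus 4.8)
The plan is to read AP2$(e,b)$ as one linear parabolic problem on the fixed interval $(0,1)$ with diffusion coefficient $D(x,t):=d_l$ for $x<e(t)$ and $D(x,t):=d_a$ for $x>e(t)$ jumping across the moving interface $x=e(t)$, whose only time-dependent right-hand datum is the boundary flux $g(t):=b_1p(b(t))-b_2p(u_b(t))$ at $x=1$. I would first record that the transmission condition \eqref{FBP2} is the natural conormal interface condition here: multiplying each equation in \eqref{EQa} by $\varphi\in W^{1,2}(0,1)$, integrating over $(0,e(t))$ and $(e(t),1)$ and integrating by parts, the interface terms collapse to $[d_lw_x(t,e-)-d_aw_x(t,e+)]\varphi(e)$, so that asking $w(t,\cdot)\in W^{1,2}(0,1)$ (continuity across the interface, forced by the regularity class) together with
\begin{equation*}
\int_0^1 w_t\varphi\,dx+\int_0^1 D(x,t)w_x\varphi_x\,dx=-g(t)\varphi(1),\quad \varphi\in W^{1,2}(0,1),\ \text{a.e. } t,
\end{equation*}
is equivalent to AP2$(e,b)$ together with \eqref{FBP2}. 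I take this identity and $w(0)=w_0$ as the working formulation.

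Existence and uniqueness in the energy class $L^2(0,T;W^{1,2}(0,1))\cap C([0,T];L^2(0,1))$ with $w_t\in L^2(0,T;W^{1,2}(0,1)^{*})$ then follow from the standard theory of linear parabolic variational equations: because $e\in W^{1,2}(0,T)\subset C([0,T])$ with $0<e<1$, there is $\delta_0>0$ with $\delta_0\le e\le 1-\delta_0$, so the form $(z,\varphi)\mapsto\int_0^1 D(x,t)z_x\varphi_x\,dx$ is bounded, measurable in $t$ and satisfies a G\aa{}rding-type inequality with constants independent of $t$, while $\varphi\mapsto\varphi(1)$ is bounded on $W^{1,2}(0,1)$ and $g\in L^2(0,T)$. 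Uniqueness is immediate: the difference $\bar w$ of two solutions satisfies the homogeneous identity, and testing with $\bar w$ gives $\tfrac12\frac{d}{dt}|\bar w|_{L^2}^2+\int_0^1 D\bar w_x^2\,dx=0$, whence $\bar w\equiv 0$; note this argument never differentiates $D$ in time.

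The substance of the lemma is the upgrade to $w\in W^{1,2}(0,T;L^2(0,1))\cap L^\infty(0,T;W^{1,2}(0,1))$ with $w_{xx}\in L^2(Q_l(T,e)),L^2(Q_a(T,e))$, and here the moving jump of $D$ is exactly the main obstacle. Testing the equation with $w_t$ is the natural device, but the cross term $\int D w_x w_{xt}$ yields $\tfrac12\frac{d}{dt}\int Dw_x^2-\tfrac12\int(\partial_t D)w_x^2$, and $\partial_t D=(d_l-d_a)\,e'(t)\,\delta_{\{x=e(t)\}}$ is a moving Dirac mass, so the correction $\propto e'(t)\,w_x(t,e(t))^2$ is not controllable by $|w_x|_{L^2}^2$. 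To remove it I would straighten the interface by the piecewise-affine change of variables $\xi=\Phi(t,x)$ mapping $[0,e(t)]$ and $[e(t),1]$ affinely onto $[0,\tfrac12]$ and $[\tfrac12,1]$, and set $W(t,\xi)=w(t,\Phi(t,\cdot)^{-1}(\xi))$. The transformed problem is again uniformly parabolic on $(0,1)$ (the principal coefficient stays bounded above and below precisely because $e$ is bounded away from $0$ and $1$), its coefficient jump now sits at the fixed point $\xi=\tfrac12$, its principal coefficient is $W^{1,2}$ in $t$ (since $1/e^2,1/(1-e)^2\in W^{1,2}(0,T)$), and the only new feature is a first-order term with coefficient proportional to $e'(t)$, hence merely $L^2$ in time.

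On the straightened problem (after a harmless lift of the inhomogeneous flux at $\xi=1$) I would run a Galerkin scheme in a basis adapted to the Neumann condition at $\xi=0$ and test with $W^N_t$. Now the coefficient-time-derivative term is an honest integral $\le C|e'(t)|\,|W^N_\xi|_{L^2}^2$, the first-order term is absorbed as $\tfrac14|W^N_t|_{L^2}^2+C|e'(t)|^2|W^N_\xi|_{L^2}^2$, and the boundary forcing is handled by integrating $\int_0^T g\,W^N_t(\cdot,1)\,dt$ by parts in time into $[g\,W^N(\cdot,1)]_0^T-\int_0^T g'\,W^N(\cdot,1)\,dt$, the endpoint term being absorbed by Young's inequality into the energy. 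This last step is exactly where $g\in W^{1,2}(0,T)$ is needed, which holds because $b,u_b\in W^{1,2}(0,T)$ and $p\in C^1$ with bounded derivative imply $p(b),p(u_b)\in W^{1,2}(0,T)$. Since $|e'|,|e'|^2\in L^1(0,T)$, a Gronwall argument closes the estimate and gives uniform bounds on $W^N$ in $L^\infty(0,T;W^{1,2})\cap W^{1,2}(0,T;L^2)$; passing to the limit, reading $W_{\xi\xi}$ off the equation, and transforming back yields the claimed regularity for $w$, with $w_{xx}\in L^2$ separately on $Q_l(T,e)$ and $Q_a(T,e)$. The one step demanding genuine care is this $w_t$-estimate: the straightening is what converts the distributional $\partial_t D$ into an honest $L^2(0,T)$ coefficient, and the integrability $e'\in L^2(0,T)$ together with $g\in W^{1,2}(0,T)$ is precisely what the Gronwall and the time integration by parts require.
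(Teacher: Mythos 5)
Your proposal is correct and follows essentially the route the paper intends: the paper omits this proof, delegating it to [\cite{AK}, Proposition 3.2] ``on account of $b \in W^{1,2}(0,T)$'', and that argument rests on exactly the fixed-domain straightening of the moving interface (the same device as the maps $\overline{u}(\cdot,y)$ used in the proofs of Lemmas \ref{lemma100} and \ref{lemma101}) followed by the $w_t$-type energy estimate. You correctly isolate the two points the citation hides: the straightening turns the distributional time derivative of the jumping diffusion coefficient into honest coefficients controlled by $e' \in L^2(0,T)$, and (A1) together with $b, u_b \in W^{1,2}(0,T)$ gives $g = b_1 p(b) - b_2 p(u_b) \in W^{1,2}(0,T)$, which is precisely what the time integration by parts of the boundary term in the $w_t$-estimate requires.
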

We can prove Lemma \ref{prop3-2} in the similar way to that of \cite[Proposition 3.2]{AK}, on account of $b \in  W^{1,2}(0,T)$. Thus, we omit the proof of the lemma.
\par Due to the results obtained in \cite{AK}, we provide some uniform estimates for solutions of AP1($e, u_0$). Here, we put
\begin{align*}
K(\delta, e_0, M,T_0) =\Big \{ e \in W^{1,3}(0,T) \mid e(0) = e_0, \delta \leq e \leq 1 - \delta \text{ on } [0, T_0], \int_0^{T_0} \left|e'\right|^3 dt \leq M
 \Big \},
\end{align*}
where $\delta \in (0,1)$, $e_0 \in [\delta, 1-\delta]$, $M>0$ and $0 < T_0 \leq T$.
\begin{lemma}
 [cf. Lemmas 4.2, 4.4 and 4.5 in \cite{AK}]\label{lemma100} Let $\delta > 0$, $e_0 \in [\delta, 1-\delta]$, $M>0$, $T > 0$ and $e \in K(\delta, e_0, M,T)$ and assume \Aref{2} and \Aref{4}. If $u$ is a solution of AP1($e, u_0$), then for some $C_1 = C_1(\delta, M,T,\left|u_0\right|_{W^{1,2}(0,1)}) > 0$ such that
\begin{align}
\int^1_0 \left|u(t,x)\right|^2 dx +  \int^{t}_{0} \!\!\! \int^{1}_{0} \left| u_x(\tau,x)\right|^2 dx d\tau  \leq C_{1} \text{ for } 0 \leq t \leq T,  \label{tana}
\end{align}
and
\begin{align}
\int^1_0 \left|u_x(t,x)\right|^2 dx +  \int^{t}_{0} \!\!\! \int^{1}_{0} \left| u_{\tau}(\tau,x)\right|^2 dx d\tau  \leq C_{1} \text{ for } 0 \leq t \leq T. \label{kiku}
\end{align}
Moreover, there exists a positive constant $C_2 = C_2(\delta, M,T,\left|u_0\right|_{W^{1,\infty}(0,1)}) $ such that
\begin{align}
\left|u_x(t, e(t)-)\right|, \left|u_x(t, e(t)+)\right|  \leq C_{2} \text{ for }  0 \leq t \leq T. \label{maru}
\end{align}
\end{lemma}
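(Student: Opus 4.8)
The plan is to obtain all three bounds from energy identities produced by testing the heat equations \eqref{EQl} against $u$ (for \eqref{tana}) and against $u_t$ (for \eqref{kiku}) on each sub-interval $(0,e(t))$ and $(e(t),1)$ separately, together with an $L^\infty$-gradient estimate at the free boundary for \eqref{maru}. The hypothesis $e\in K(\delta,e_0,M,T)$ is used through two facts: the non-degeneracy $\delta\le e\le 1-\delta$, which keeps both sub-intervals of length at least $\delta$ and makes their lateral boundary of class $C^{0,2/3}$ via $W^{1,3}(0,T)\hookrightarrow C^{0,2/3}([0,T])$, and the bound $\int_0^T|e'|^3\,dt\le M$, which by H\"older's inequality gives $\int_0^T|e'|\,dt\le T^{2/3}M^{1/3}$. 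As a preliminary I would record the maximum-principle bound $\|u\|_{L^\infty(Q(T))}\le C_\infty$; combined with the sign information $u\le 0$ in $Q_l(T,e)$ and $u\ge 0$ in $Q_a(T,e)$ from Lemma \ref{prop3-1}, this turns the quartic term in \eqref{BCau} into a bounded lower-order perturbation.

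For \eqref{tana} I multiply \eqref{EQl} by $u$ and integrate over each region. Since $u(t,e(t))=0$, both the interface flux terms and the transport terms arising from differentiating $\int_0^{e(t)}$ and $\int_{e(t)}^1$ in time vanish, leaving
\begin{equation*}
\frac{d}{dt}\Big(\tfrac{c_l}{2}\int_0^{e(t)}u^2\,dx+\tfrac{c_a}{2}\int_{e(t)}^1 u^2\,dx\Big)+k_l\int_0^{e(t)}u_x^2\,dx+k_a\int_{e(t)}^1 u_x^2\,dx=k_a u_x(t,1)u(t,1).
\end{equation*}
After substituting \eqref{BCau} for $k_a u_x(t,1)$, the quadratic part $-h\,u(t,1)^2$ carries a favorable sign, and the remaining terms are controlled using the trace estimate of Lemma \ref{lemgag} on $(e(t),1)$ (where $u(t,e(t))=0$), giving $|u(t,1)|^2\le\epsilon\,|u_x|_{L^2(e(t),1)}^2+C_\epsilon\,|u|_{L^2(e(t),1)}^2$; absorbing the gradient term on the left and applying Gronwall's inequality yields \eqref{tana}, with $C_1$ depending only on $\delta$, $M$, $T$ and $|u_0|_{W^{1,2}(0,1)}$.

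I would prove \eqref{maru} next, as it is needed to close \eqref{kiku}. Flattening the two regions onto $(0,T)\times(0,1)$ by $y=x/e(t)$ and $y=(x-e(t))/(1-e(t))$ converts \eqref{EQl} into uniformly parabolic equations whose coefficients are bounded in terms of $\delta$, $e$ and $e'$, with bounded data ($u=0$ on the transformed free boundary, and \eqref{BCau} with $u_b\in L^\infty$ at $x=1$); the $L^\infty$-gradient estimate for linear parabolic equations in \cite{LSU} then gives $\|u_x\|_{L^\infty}\le C_2$, and restriction to $x=e(t)$ is \eqref{maru}. For \eqref{kiku} I multiply \eqref{EQl} by $u_t$; differentiating $u(t,e(t))=0$ gives $u_t(t,e(t)\pm)=-u_x(t,e(t)\pm)e'(t)$, and collecting the transport terms produces
\begin{equation*}
c_l\int_0^{e(t)}u_t^2\,dx+c_a\int_{e(t)}^1 u_t^2\,dx+\frac{d}{dt}\Big(\tfrac{k_l}{2}\int_0^{e(t)}u_x^2\,dx+\tfrac{k_a}{2}\int_{e(t)}^1 u_x^2\,dx\Big)=-\tfrac{k_l}{2}u_x(t,e(t)-)^2 e'(t)+\tfrac{k_a}{2}u_x(t,e(t)+)^2 e'(t)+k_a u_x(t,1)u_t(t,1).
\end{equation*}
The two interface terms, integrated in time, are bounded by \eqref{maru} and $\int_0^T|e'|\,dt\le T^{2/3}M^{1/3}$. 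For the flux term I use the primitive $\Phi(s,t)=\tfrac h2(s+\theta_c-u_b)^2+\tfrac\sigma5(s+\theta_c)^5-\sigma u_b^4 s$, whose $s$-derivative equals the right-hand side of \eqref{BCau}, so that $k_a u_x(t,1)u_t(t,1)=-\tfrac{d}{dt}\Phi(u(t,1),t)+\Phi_t$ with $\Phi_t$ linear in $u_b'\in L^2(0,T)$ by (A4); integrating over $[0,t]$, moving the bounded term $\Phi(u(t,1),t)$ to the left, and invoking \eqref{tana} for the initial contributions gives \eqref{kiku}.

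The main obstacle is the interface pair $u_x(t,e(t)\pm)^2 e'(t)$ in the $u_t$-identity: lacking an independent pointwise bound on the trace of $u_x$ at the moving boundary, it cannot be absorbed, which is exactly why \eqref{maru} must be secured first. Establishing \eqref{maru} is itself the delicate step, since it invokes the parabolic gradient theory of \cite{LSU} on a domain whose lateral boundary is only $C^{0,2/3}$ (coming from $e\in W^{1,3}(0,T)$); once \eqref{maru}, the sign structure of \eqref{BCau}, and the non-degeneracy furnished by $K(\delta,e_0,M,T)$ are in hand, the energy manipulations for \eqref{tana} and \eqref{kiku} are routine, the formal computations being justified by a difference-quotient approximation in time as in \cite{AK}.
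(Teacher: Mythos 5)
Your overall strategy is sound but genuinely different in execution from the paper's. The paper proves this lemma essentially by transplantation: it introduces the flattening $\overline{u}(t,y)=u(t,ye(t))$ for $0\le y\le 1$ and $\overline{u}(t,y)=u(t,e(t)+(y-1)(1-e(t)))$ for $1\le y\le 2$, quotes the estimates of Lemmas 4.2, 4.4 and 4.5 of \cite{AK} for $\overline{u}$ on the fixed rectangle (inequalities \eqref{hoshi1} and \eqref{hoshi2}), and transforms back using $\delta\le e\le 1-\delta$. You instead run the energy identities directly in the moving domain. For \eqref{tana} this works: your cancellation of the interface flux and transport terms via $u(t,e(t))=0$ is exactly right, and the monotone structure of \eqref{BCau} plus the trace inequality of Lemma \ref{lemgag} closes the Gronwall argument. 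Your route to \eqref{maru} --- flatten, then invoke the $L^\infty$ gradient estimate of \cite{LSU} --- is also what \cite{AK} Lemma 4.5 does; note, however, that the drift coefficient $ye'/e$ (resp.\ $(y-1)e'/(1-e)$) is only $L^3$ in time, not bounded, and $q=3=n+2$ with $n=1$ is precisely the integrability the structure conditions of \cite{LSU} require --- this is why $K(\delta,e_0,M,T)$ is built on $W^{1,3}(0,T)$ --- so your phrase ``coefficients bounded in terms of $\delta$, $e$ and $e'$'' should be corrected to ``coefficients with $L^3$-in-time drift, controlled by $\delta$ and $M$.''

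The one place the proposal falls short of the statement as written is the constant in \eqref{kiku}. In the moving domain the $u_t$-test produces the interface terms $\mp\frac{k}{2}u_x(t,e(t)\pm)^2e'(t)$, and you correctly observe these cannot be absorbed without a pointwise trace bound, so you establish \eqref{maru} first. But that forces your constant in \eqref{kiku} to depend on $|u_0|_{W^{1,\infty}(0,1)}$ through $C_2$, whereas the lemma asserts $C_1=C_1(\delta,M,T,|u_0|_{W^{1,2}(0,1)})$; this sharper dependence is not cosmetic, since Proposition \ref{prop1} and Lemma \ref{lem4} later claim constants depending only on $|u_0|_{W^{1,2}(0,1)}$ and $|u_{0xx}|_{L^2(1-\delta,1)}$, and those claims inherit the dependence of \eqref{kiku}. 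The flattening removes the obstacle entirely: on the fixed rectangle $\overline{u}(t,1)=0$ for all $t$, hence $\overline{u}_t(t,1)=0$ and no interface flux term appears at all; the moving boundary survives only as the distributed drift $\frac{ye'}{e}\overline{u}_y\overline{u}_t$, which is absorbed by Cauchy--Schwarz into $\epsilon|\overline{u}_t|^2+C|e'|^2|\overline{u}_y|^2$ and Gronwall's inequality with $|e'|^2\in L^{3/2}(0,T)\subset L^1(0,T)$. That yields \eqref{kiku} with only the $W^{1,2}$-norm of $u_0$, independently of and prior to \eqref{maru}. Under (A2) your version still produces a finite bound, so nothing downstream collapses, but to prove the lemma with the stated constant dependence you should derive \eqref{kiku} in the flattened variables rather than after \eqref{maru}.
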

\begin{proof}
In \cite{AK}, by putting
\begin{align*}
\overline{u}(\cdot,y) = 
\begin{cases}
u(\cdot,y e) \text{ for }  0 \leq y \leq 1, \\
u(\cdot,e+(y-1)(1-e)) \text{ for }   1 \leq y \leq 2 \text{ on } [0,T],
\end{cases}
\end{align*}
we obtained the existence of a positive constant  $R_1 = R_1\left(\delta, M,T,\left|u_0\right|_{W^{1,2}(0,1)}\right)$ such that
\begin{align}
&\frac{c_*}{2} \int^2_0 \left|\overline{u}(t,y)\right|^2 dy + \frac{k_a}{2} \int^{t}_{0} \!\!\! \int^{2}_{0} \left|\overline{u}_y(\tau,y)\right|^2 dy d\tau  \leq R_1 \text{ for } 0 \leq t \leq T,\label{hoshi1}\\
& \frac{k_l}{2e(t)} \int^{1}_{0} \left|\overline{u}_y(t,y)\right|^2 dy + \frac{k_a}{2(1-e(t))}  \int^{2}_{1} \left| \overline{u}_y(t,y)\right|^2 dy \notag\\
&\quad \, +\frac{\delta c_a}{2} \int^{t}_{0} \!\!\! \int^{2}_{0} \left| \overline{u}_{\tau}(\tau,y) \right|^2 dy d\tau \leq R_1 \text{ for } 0 \leq t \leq T,\label{hoshi2}
\end{align}
where $c_* = \min\{c_l, c_a\}$. It is clear that $\overline{u}_y(t,y) = e(t) u_x(t,y e(t))  \text{ for }  0 \leq y \leq 1$ and
\[
\overline{u}_y(t,y) = (1-e(t))u_x(t,e(t)+(y-1)(1-e(t))) \text{ for }   1 \leq y \leq 2,
\]
and $0 \leq t \leq T$. Thus, we get \eqref{tana} and \eqref{kiku}. Moreover, \eqref{maru} is a direct consequence of Lemma 4.5 in \cite{AK}.
\end{proof}
In the rest of this section we recall some results covered with estimates for the difference of solutions to AP1($e, u_0$).
\begin{lemma}\label{lemma101}
 Let $\delta > 0$, $M>0$, $T > 0$, $e_{0i} \in [\delta, 1-\delta]$ and $e_i \in K(\delta, e_{0i}, M,T)$ for $i=1,2$ and assume \Aref{2}. If $u_i$ is a solution of AP1($e_i, u_0$) for $i =1,2$, then, for some positive constants $C_3 = C_3(\delta, M,T, \left|u_0\right|_{W^{1,2}(0,1)})$ it holds that
\begin{samepage}
\begin{align}
&\int^1_0 \left|u_1(t) - u_2(t)\right|^2 dx +  \int^{t}_{0} \!\!\! \int^{1}_{0} \left| u_{1x}(\tau) - u_{2x}(\tau) \right|^2 dx d\tau \notag \\
& \leq C_{3} \int^{t}_0 (\left|e_{1} - e_{2}\right|^2 + \left|e'_{1} - e'_{2}\right|^2) d\tau \text{ for } 0 \leq t \leq T.  \label{gen}
\end{align}
\end{samepage}
Moreover, there exists a positive constant $C_4 = C_4(\delta, M,T,  \left|u_0\right|_{W^{1,\infty}(0,1)}) $ such that
\begin{align}
&\int^t_0 \left|u_{1x}(\tau,e_1(\tau)-) - u_{2x}(\tau,e_2(\tau)-)\right|^3 d\tau\notag\\
&\quad \,  +  \int^t_0 \left|u_{1x}(\tau,e_1(\tau)+) - u_{2x}(\tau ,e_2(\tau)+)\right|^3 d\tau\notag\\
& \leq C_{4} t^{1/4} \int^{t}_0 (\left|e_{1} - e_{2}\right|^3 + \left|e'_{1} - e'_{2}\right|^3) d\tau \text{ for } 0 \leq t \leq T. \label{tono}
\end{align}
\end{lemma}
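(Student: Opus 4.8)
The plan is to flatten the two moving domains onto the common fixed interval $(0,2)$ and to run the comparison there. Using the change of variables already employed for Lemma~\ref{lemma100}, set $\bar u_i(t,y)=u_i(t,ye_i(t))$ for $0\le y\le1$ and $\bar u_i(t,y)=u_i(t,e_i(t)+(y-1)(1-e_i(t)))$ for $1\le y\le2$. Each $\bar u_i$ then solves a heat equation on $(0,2)$ whose coefficients depend on $e_i$ and $e_i'$, of the form $c_l\bar u_{it}=\tfrac{k_l}{e_i^2}\bar u_{iyy}+c_l\,y\tfrac{e_i'}{e_i}\bar u_{iy}$ on $(0,1)$ and its crust analogue on $(1,2)$, together with $\bar u_{iy}(t,0)=0$, $\bar u_i(t,1)=0$, and the Robin condition \eqref{BCau} rewritten at $y=2$. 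Writing $U=\bar u_1-\bar u_2$, the interface is now pinned and $U$ satisfies $U_y(t,0)=0$, $U(t,1)=0$ and a Robin-type condition at $y=2$ in which the oven term $u_b$ cancels. Subtracting the equations, every coefficient difference is of type $\tfrac1{e_1^2}-\tfrac1{e_2^2}$ or $\tfrac{e_1'}{e_1}-\tfrac{e_2'}{e_2}$, each controlled by $C(|e_1-e_2|+|e_1'-e_2'|+|e_i'||e_1-e_2|)$ because $e_i\ge\delta$; these multiply the first derivatives $\bar u_{iy}$, which are uniformly bounded by Lemma~\ref{lemma100} and the $L^\infty$-gradient estimate of \cite{LSU}.

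To obtain \eqref{gen} I would test the equation for $U$ by $U$ (with the piecewise weight $c_l,c_a$) and integrate over $(0,2)$; integrating the second-order term by parts produces the dissipation $\int_0^2\kappa(e_1)|U_y|^2$ while the boundary terms at $y=0$ and $y=1$ drop out thanks to $U_y(t,0)=0$ and $U(t,1)=0$, and the term at $y=2$ is absorbed using the Robin condition, the difference of the quartic nonlinearities being Lipschitz in $U(t,2)$ because $\bar u_i(t,2)=u_i(t,1)$ is bounded. After this integration by parts only first derivatives $\bar u_{iy}$ survive in the source terms, so that by Young's inequality the coefficient-difference contributions are dominated by $\varepsilon(|U_y|_{L^2}^2+|U|_{L^2}^2)$ plus $C(|e_1-e_2|^2+|e_1'-e_2'|^2)$ together with cross terms $C|e_i'|(|e_1-e_2|^2+|U|_{L^2}^2)$; the $\varepsilon$-terms are absorbed on the left and a Gronwall argument in which the $|e_i'|$-weighted factors enter the exponent (legitimate since $e_i'\in L^3(0,T)$) yields the $y$-analogue of \eqref{gen}, with $C_3$ absorbing $e^{CT}$. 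Transferring back to $x$ is elementary: for $x<\min(e_1,e_2)$ one writes $u_1(t,x)-u_2(t,x)=U(t,x/e_1)+(\bar u_2(t,x/e_1)-\bar u_2(t,x/e_2))$ and bounds the last bracket by $C|\bar u_{2y}|_{L^\infty}|e_1-e_2|$, and similarly on the crust and on the thin overlap strip, which turns the $y$-estimate into \eqref{gen}.

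For the interface-flux estimate \eqref{tono} I would first write $u_{1x}(\tau,e_1(\tau)-)-u_{2x}(\tau,e_2(\tau)-)=\tfrac1{e_1}U_y(\tau,1-)+(\tfrac1{e_1}-\tfrac1{e_2})\bar u_{2y}(\tau,1-)$; the second term is $\le C|e_1-e_2|$ by the free-boundary gradient bound \eqref{maru} and contributes the $|e_1-e_2|^3$ part of the right-hand side. The genuinely new object is the trace $U_y(\tau,1-)$, which I would control by the Gagliardo--Nirenberg inequality of Lemma~\ref{lemgag} applied to $z=U_y$ on $(0,1)$ (using $U_y(\tau,0)=0$), $|U_y(\tau,1-)|\le C_0|U_{yy}|_{L^2(0,1)}^{1/2}|U_y|_{L^2(0,1)}^{1/2}$. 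To feed the right-hand side I would run a second, higher-order energy estimate by testing the equation for $U$ with $U_t$, which bounds $\sup_\tau|U_y(\tau)|_{L^2}^2+\int_0^t|U_\tau|_{L^2}^2$ and, through the equation, $\int_0^t|U_{yy}|_{L^2}^2$; here the $L^\infty$-bound for $u_{ix}$ from \cite{LSU} is exactly what closes the coefficient-difference terms. Cubing the Gagliardo--Nirenberg bound and applying H\"older in $\tau$ then converts these $L^2$-in-time quantities into $\int_0^t(|e_1-e_2|^3+|e_1'-e_2'|^3)$ and produces the factor $t^{1/4}$. The crust side on $(1,2)$ is handled identically, using the first inequality of Lemma~\ref{lemgag} at the endpoint $y=1$.

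The main obstacle is keeping the right-hand sides exactly linear in $\int_0^t(|e_1-e_2|^2+|e_1'-e_2'|^2)$ (respectively cubic) despite the free-boundary speeds $e_i'$ being only bounded in $L^3$ and not in $L^\infty$. The dangerous terms are the coefficient-difference cross terms carrying $|e_i'|\,|e_1-e_2|$; these are tamed by combining $\int_0^t|e_i'|^3\le M$ with the elementary bound $\sup_\tau|e_1-e_2|^2\le T\int_0^t|e_1'-e_2'|^2$, valid once the two problems share the initial interface position, so that $e_1(0)=e_2(0)$ and hence also $U(0,\cdot)=0$ (consistent with $u_1(0)=u_2(0)=u_0$); this lets $\sup_\tau|e_1-e_2|^2$ be reabsorbed into $\int_0^t|e_1'-e_2'|^2$. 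For \eqref{tono} the delicate point is instead the matching of exponents: because only $L^2$-in-time control of $U_{yy}$ is available, the interface trace of $U_y$ must be extracted through Gagliardo--Nirenberg, and recovering precisely the cube with the weight $t^{1/4}$ forces a specific choice of H\"older exponents in which the two energy levels and the \cite{LSU} gradient bound are made to combine correctly.
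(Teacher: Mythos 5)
Your treatment of \eqref{gen} and the overall architecture for \eqref{tono} (flatten onto $(0,2)$, split the interface flux into a coefficient-difference part controlled by \eqref{maru} plus the trace of $U_y$ at $y=1$, estimate that trace by Lemma~\ref{lemgag} fed with a higher-order energy bound, then choose H\"older exponents to produce $t^{1/4}$) all match the paper. But there is a genuine gap at the one point where your route deviates: you propose to obtain the higher-order bounds $\sup_\tau|U_y(\tau)|_{L^2}^2+\int_0^t|U_\tau|_{L^2}^2$ (and then $\int_0^t|U_{yy}|_{L^2}^2$ through the equation) by testing the equation for $U=\bar u_1-\bar u_2$ directly with $U_t$, and you assert the crust interval $(1,2)$ "is handled identically" to $(0,1)$. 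It is not. On $(0,1)$ the boundary terms from integrating $U_{yy}U_t$ by parts vanish because $U_y(t,0)=0$ and $U(t,1)=0$ (hence $U_t(t,1)=0$). On $(1,2)$, however, integration by parts leaves the outer boundary term $\frac{k_a}{(1-e_1)^2}U_y(t,2)U_t(t,2)$, and via the flattened Robin condition $k_aU_y(t,2)=-\bigl[(1-e_1)\bigl(g(t,\bar u_1(t,2))-g(t,\bar u_2(t,2))\bigr)+(e_2-e_1)g(t,\bar u_2(t,2))\bigr]$ this term carries the trace $U_t(t,2)$ of the \emph{time derivative} of the difference. That trace is not controlled by the energy you have: bounding it by Lemma~\ref{lemgag} would require $\|U_{ty}\|_{L^2(1,2)}$, a third-order quantity, and no monotonicity can absorb it since the difference of the quartic fluxes is not a function of $U(t,2)$ alone and the Jacobian factors $(1-e_i)$ and the remainder $(e_2-e_1)g(\bar u_2)$ destroy any exact $\frac{d}{dt}$-structure. (Integrating the boundary term by parts in time instead would require $u_{it}(\cdot,1)\in L^2(0,T)$, i.e.\ Proposition~\ref{prop1}, which needs (A5) and would change the dependence of $C_4$, whereas the lemma assumes only (A2).) Since the crust half of \eqref{tono} needs $\int_0^t\|U_{yy}\|_{L^2(1,2)}^2$, this is not a side issue: your higher-order estimate does not close as sketched.

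This is exactly what the paper's weight $q(y)=-(y-1)^2+1$ is for: setting $\bar v_i=q\bar u_i$ makes $\bar v_i$ vanish at $y=0$ and $y=2$, so testing with $\bar v_{it}$ produces no outer boundary terms on either interval, while $q(1)=1$, $q'(1)=0$ guarantee $\bar v_{iy}(t,1\pm)=\bar u_{iy}(t,1\pm)$, so the interface traces you actually need are unchanged; the paper then recovers $\bar v_{1yy}-\bar v_{2yy}$ in $L^2$ from the equation (its \eqref{sharp}, \eqref{bay}) and applies Lemma~\ref{lemgag} to $\bar v_{1y}-\bar v_{2y}$. Your crumb-side computation survives verbatim under this modification, and your exponent bookkeeping for $t^{1/4}$ coincides with the paper's. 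One further shared caveat: your absorption $\sup_\tau|e_1-e_2|^2\le T\int_0^t|e_1'-e_2'|^2$ uses $e_1(0)=e_2(0)$, which the lemma's hypotheses ($e_{0i}$ possibly distinct) do not grant; the paper's own proof tacitly uses the same identification in the step $\int_0^t|e_1-e_2|^3\,d\tau\le t^3\int_0^t|e_1'-e_2'|^3\,d\tau$, and the lemma is only ever invoked with a common $e_0$ in the fixed-point argument, so this is harmless, but you should state it as a hypothesis rather than derive it from $u_1(0)=u_2(0)$, which says nothing about the interfaces.
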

\begin{proof}
Let
\begin{align*}
\overline{u}_i(\cdot,y) = 
\begin{cases}
u_i(\cdot,y e_i) \text{ if } 0 \leq y \leq 1, \\
u_i(\cdot,e_i+(y-1)(1-e_i)) \text{ if } 1 \leq y \leq 2, \text{ on } [0,T] \text{ for } i = 1, 2.
\end{cases}
\end{align*}
Then, Lemma 5.1 in \cite{AK} guarantees existence of a positive constant $R_2 = R_2(\delta, M, T)$ such that
\begin{align}
& \frac{c_l}{2} \int^1_{0} \left|\overline{u}_{1}(t,y) - \overline{u}_{2}(t,y)\right|^2 dy + \frac{k_l}{2} \int^{t}_0 \!\!\! \int^1_0 \left( \overline{u}_{1y}(\tau,y)-  \overline{u}_{2y}(\tau,y)\right)^2 dy d\tau\notag\\
&  \leq R_2 \int^{t}_0 (\left|e_{1}(\tau) - e_{2}(\tau)\right|^2 + \left|e'_{1}(\tau) - e'_{2}(\tau)\right|^2) d\tau,\label{vr2}\\ % \text{ for } 0 \leq t_1 \leq T,\\
&\frac{c_a}{2} \int^2_{1} \left|\overline{u}_{1}(t,y) - \overline{u}_{2}(t,y)\right|^2 dy + \frac{k_a}{2}   \int^{t}_0 \!\!\! \int^2_1 \left( \overline{u}_{1y}(\tau,y)-  \overline{u}_{2y}(\tau,y)\right)^2 dy d\tau\notag\\
&  \leq R_2 \int^{t}_0 (\left|e_{1}(\tau) - e_{2}(\tau)\right|^2 + \left|e'_{1}(\tau) - e'_{2}(\tau)\right|^2) d\tau \text{ for } 0 \leq t \leq T.\notag
\end{align} 
Hence, we can obtain \eqref{gen}, easily.
\par Next, in order to prove \eqref{tono} we put  $q(y) = -(y-1)^2 + 1$ for $y \in [0,2]$ and $ \overline{v}_{i } =q \overline{u}_{i } $ on $Q(T)$ for $i=1, 2$. Due to Lemmas 5.3 and 5.4 in \cite{AK}, there exists a positive constant $R_3 = R_3(\delta, M, T)$ such that
 \begin{align}
 &\frac{k_l}{2} \int^{1}_{0} \left|\overline{v}_{1 y}(t) - \overline{v}_{2 y}(t)\right|^2 dy
 + \frac{c_l \delta^2}{4} \int^{t}_{0} \!\!\! \int^{1}_{0} \left|\overline{v}_{1t} - \overline{v}_{2t}\right|^2 dyd\tau \notag\\
 &\quad \leq R_3 \int^{t}_0 \left(\left|e_{1 } - e_{2 }\right|^2 + \left|e_{1 }' - e_{2 }'\right|^2\right) d\tau, \label{vr3}\\
 &\frac{k_a}{2} \int^{2}_{1} \left|\overline{v}_{1 y}(t) - \overline{v}_{2 y}(t)\right|^2 dy
 + \frac{c_a \delta^2}{4} \int^{t}_{0} \!\!\! \int^{2}_{1} \left|\overline{v}_{1t} - \overline{v}_{2t}\right|^2 dyd\tau \notag\\
 &\quad \leq R_3 \int^{t}_0 \left(\left|e_{1 } - e_{2 }\right|^2 + \left|e_{1 }' - e_{2 }'\right|^2\right) d\tau \notag\\
 &\text{for } 0 \leq t \leq T.\notag
   \end{align}
    Here, we note that
 \begin{align}
&    \displaystyle c_l \frac{\partial \overline{v}_{i }}{\partial t} = \frac{k_l}{e_{i }^2} \left(\frac{\partial^2 \overline{v}_{i }}{\partial y^2} - 2\frac{\partial \overline{u}_{i }}{\partial y}q' - \overline{u}_{i} q''\right) + \frac{c_lye'_{i }}{e_{i }}\left(\frac{\partial \overline{v}_{i}}{\partial y} - \overline{u}_{i } q'\right) \text{ in } (0,T) \times (0,1),\label{sharp}
  \end{align}
$\displaystyle \max_{0\leq y \leq 1}\left|q'(y)\right|=2$ and $q''(y)=-2$ for $0\leq y \leq 1$. Hence, from \eqref{sharp} it follows that
  \begin{align}
    &\left|\overline{ v}_{1yy} - \overline{ v}_{2y y}\right|_{L^2(0,1)}\notag\\
    &= \left( \int^1_0 \left| \frac{e_1^2}{k_l}c_l\overline{v}_{1t} + 2\overline{u}_{1y}q' + \overline{u}_{1}q'' - \frac{c_l y e'_1 e_1}{k_l} \overline{v}_{1y} +  \frac{c_l y e'_1 e_1}{k_l} \overline{u}_1 q' \right. \right.\notag\\
    &\quad \, \left.\left. - \frac{e_2^2}{k_l}c_l\overline{v}_{2t} - 2\overline{u}_{2y}q' - \overline{u}_{2}q'' + \frac{c_l y e'_2 e_2}{k_l} \overline{v}_{2y} -  \frac{c_l y e'_2 e_2}{k_l} \overline{u}_2 q' \right|^2 dy \right)^\frac{1}{2}\notag\\
    &\leq R_4 \left( (e_1^2 - e_2^2)^2 \int^1_0 \left|\overline{u}_{1t}\right|^2 dy +  \int^1_0\left|\overline{v}_{1t} - \overline{v}_{2t}\right|^2dy+  \int^1_0 \left|\overline{u}_{1y} - \overline{u}_{2y}\right|^2dy \right.\notag\\
    & \quad \,  +  \int^1_0 \left|\overline{u}_{1} - \overline{u}_{2}\right|^2dy+  \left|e'_2\right|^2  \int^1_0 \left|\overline{v}_{1y} - \overline{v}_{2y}\right|^2dy\notag\\
   & \quad \,   +   \left|e'_2\right|^2 \left|e_1 - e_2\right|^2 \int^1_0 \left|q'\overline{u}_1 + q \overline{u}_{1y}\right|^2dy  +   \left|e'_1 - e'_2\right|^2 \int^1_0 \left|\overline{v}_{1y}\right|^2dy\notag \\
    &\left. \quad \,+  \left|e'_2\right|^2 \int^1_0 \left|\overline{u}_{1} - \overline{u}_{2}\right|^2dy+    \left|e'_2\right|^2 \left|e_1 - e_2\right|^2 \int^1_0 \left|\overline{u}_{1}\right|^2dy +   \left|e'_1 - e'_2\right|^2 \int^1_0 \left|\overline{u}_{1}\right|^2dy \right)^\frac{1}{2}\notag\\
&\hspace{80mm} \text{a.e. on } [0,T],\label{bay}
 \end{align}
 where $R_4$ is a  positive constant. Integrating \eqref{bay} with respect to the time variable, by Lemma \ref{lemma100}, \eqref{vr2} and \eqref{vr3} we see that
  \begin{align*}
  \int^t_0 \int^1_0 \left|\overline{ v}_{1yy} - \overline{ v}_{2y y}\right|^2 dy d\tau \leq R_5 \int^{t}_0 (\left|e_{1 }- e_{2 }\right|^2 + \left|e_{1 }' - e_{2 }'\right|^2 ) d\tau \text{ for } 0 \leq t \leq T,
  \end{align*} 
   where $R_5$ is a  suitable positive constant.
Since \[ \displaystyle u_i(t,x) = \overline{u}_i\left(t, \frac{x}{e_i(t)}\right)  \text{for } (t,x) \in Q_l(T,e_i)\] and $i = 1,2$, we have
 \begin{align}
&\left| u_{1x}(t, e_1(t)-) - u_{2x}(t, e_2(t)-)\right|= \left| \frac{1}{e_1(t)} \overline{ u}_{1y}(t, 1-) - \frac{1}{e_2(t)}  \overline{ u}_{2 y}(t, 1-)\right|\notag\\
&\leq \left| \left( \frac{1}{e_1(t)} - \frac{1}{e_2(t)} \right) \overline{ u}_{1y}(t, 1-)  \right| +  \left| \frac{1}{e_2(t)} \left(\overline{ u}_{1y}(t, 1-) - \overline{ u}_{2 y}(t, 1-)\right) \right|\notag\\
&=: I_1(t) + I_2(t) \text{ for a.e. } t \in [0,T]. \label{ham}
 \end{align}
 By \eqref{maru}, we see that
  \begin{align}
  I_1(t) &\leq \frac{e_1(t)}{\delta^2}\left|e_1(t) - e_2(t)\right|| u_{1x}(t, e_1(t)-)| \leq \frac{C_2}{\delta^2}\left|e_1(t) - e_2(t)\right| \text{ for a.e. } t \in [0,T]. \label{seibu}
   \end{align}
   Also, thanks to the definition of $ \overline{v}_{i }$ and Lemma \ref{lemgag}, we infer that $\overline{u}_{iy}(t,1-) = \overline{v}_{iy}(t,1-)$ for a.e. $t \in [0,T]$ and $i = 1, 2$, and
    \begin{align}
  I_2(t) &\leq  \frac{1}{\delta}  \left|\overline{ v}_{1y}(t, 1-) - \overline{ v}_{2 y}(t, 1-)\right|\notag\\
& \leq \frac{C_0}{\delta}\left(  \left|\overline{ v}_{1yy}(t) - \overline{ v}_{2y y}(t)\right|_{L^2(0,1)}^{1/2} \left|\overline{ v}_{1y}(t) - \overline{ v}_{2y}(t)\right|_{L^2(0,1)}^{1/2} + \left|\overline{ v}_{1y}(t) - \overline{ v}_{2y}(t)\right|_{L^2(0,1)} \right)\notag\\
&\hspace{80mm}  \text{ for a.e. } t \in [0,T]. \label{lions}
   \end{align}
On account of  \eqref{vr2}-\eqref{lions} and Lemma \ref{lemma100}, we can choose  positive constants $R_6$ and $R_7$ depending only on $\delta, M, T$ and $\left|u_0\right|_{W^{1,\infty}(0,1)}$ such that
\begin{multline*}
\int^t_0 \left|u_{1x}(\tau,e_1(\tau)-) - u_{2x}(\tau,e_2(\tau)-)\right|^3 d\tau \\
\shoveleft{\hspace{1.5cm}\leq  \left(\frac{C_2}{\delta^2}\right)^3 \int^t_0 \left|e_1(\tau) - e_2(\tau)\right|^3 d\tau} \\
\shoveleft{\hspace{3cm}+  \left( \frac{2 C_0}{\delta} \right)^3 \int^t_0 \Big(  \left|\overline{ v}_{1yy}(\tau) - \overline{ v}_{2y y}(\tau)\right|_{L^2(0,1)}^{3/2}
\left|\overline{ v}_{1y}(\tau) - \overline{ v}_{2y}(\tau)\right|_{L^2(0,1)}^{3/2}} \\
\shoveleft{\hspace{4.5cm}+ \left|\overline{ v}_{1y}(\tau) - \overline{ v}_{2y}(\tau)\right|_{L^2(0,1)}^3  \Big) d\tau} \\
\shoveleft{\hspace{1.5cm}\leq  \left(\frac{C_2}{\delta^2}\right)^3 \int^t_0 \left|e_1(\tau) - e_2(\tau)\right|^3 d\tau} \\
\shoveleft{\hspace{3cm}+  \left( \frac{2 C_0}{\delta} \right)^3  \Big(  \left|\overline{ v}_{1y}- \overline{ v}_{2 y}\right|_{L^{\infty}(0,t;L^2(0,1))}^{3/2}
\int^t_0 \left|\overline{ v}_{1yy}(\tau) - \overline{ v}_{2yy}(\tau)\right|_{L^2(0,1)}^{3/2} d\tau} \\
\shoveleft{\hspace{4.5cm}+ \left|\overline{ v}_{1y} - \overline{ v}_{2y}\right|_{L^{\infty}(0,t;L^2(0,1))}^3 t  \Big)} \\
\shoveleft{\hspace{1.5cm}\leq  \left(\frac{C_2}{\delta^2}\right)^3 t^3 \int^t_0 \left|e'_1 - e'_2\right|^3 d\tau} \\
\shoveleft{\hspace{3cm}+  \left( \frac{2 C_0}{\delta} \right)^3  \Big(  \left|\overline{ v}_{1y}- \overline{ v}_{2 y}\right|_{L^{\infty}(0,t;L^2(0,1))}^{3/2}
\left|\overline{ v}_{1yy}- \overline{ v}_{2 yy}\right|_{L^{2}(0,t;L^2(0,1))}^{3/2} t^{1/4}}\\
\shoveleft{\hspace{4.5cm}+ \left|\overline{ v}_{1y} - \overline{ v}_{2y}\right|_{L^{\infty}(0,t;L^2(0,1))}^3 t  \Big)} \\
\leq  R_6 t^3 \int^t_0 \left|e'_1 - e'_2\right|^3 d\tau +  R_6\left(  \int^{t}_0 (\left|e_{1 }- e_{2 }\right|^2 + \left|e_{1 }' - e_{2 }'\right|^2 ) d\tau \right)^{3/2}t^{1/4} \text{ for } 0 \leq t \leq T.
\end{multline*}\\
Hence, it holds that
\begin{align*}
\left|u_{1x}(\cdot,e_1-) - u_{2x}(\cdot,e_2-)\right|_{L^3(0,t)}^3\leq  R_7 t^{1/4}  \int^{t}_0 (\left|e_{1 }- e_{2 }\right|^3 + \left|e_{1 }' - e_{2 }'\right|^3 ) d\tau \text{ for } 0 \leq t \leq T.
\end{align*}
Similarly to above, we get
\begin{align*}
& \int^t_0 \left|u_{1x}(\tau,e_1(t)+) - u_{2x}(\tau ,e_2(\tau)+)\right|^3 d\tau\\
& \leq R_{8}  t^{1/4} \int^{t}_0 (\left|e_{1} - e_{2}\right|^3 + \left|e'_{1} - e'_{2}\right|^3) d\tau \text{ for } 0 \leq t \leq T, %\label{tono}
\end{align*}
where $R_8$ is a positive constant. Thus, Lemma \ref{lemma101} has been proved.
\end{proof}
\section{Proof of Proposition \ref{prop1}}
\label{sec3}
To show  Proposition \ref{prop1}, we introduce the function $\zeta \in  C^{\infty}([0,1])$ which satisfies 
 $0 < \zeta \leq 1  \text{ on } (1-\delta,1]$,
   $\zeta = 0  \text{ on } [0,1-\delta]$,
$\zeta(1)=1$, and $\zeta'(1-\delta) = \zeta'(1)=0$.
Let $e \in K(\delta, e_{0}, M,T)$ and $u$ be a solution of AP1($e,u_0$) on $[0,T]$ for $T > 0$, and put $\tilde{u} = \zeta u$. Then, the following equation, the boundary condition and the initial condition hold:
\begin{align}
c_a \tilde{u}_t &= k_a (\tilde{u}_{xx} - \zeta_{xx}u - 2 \zeta_x u_x) \text{ in } (0,T) \times (1-\delta, 1),\label{naga}\\
- k_a \frac{\partial \tilde{u}}{\partial x}(t,1) &= h( \tilde{u}(t,1) + \theta - u_b(t)) + \sigma( (\tilde{u}(t,1) + \theta)^4 - u_b(t)^4)\notag\\
&=g(t, \tilde{u}(t,1)) \text{ for } 0 < t < T,\notag\\
\tilde{u}(t,1-\delta) &= 0 \text{ for } 0 < t < T,\notag\\
\tilde{u}(0,x) &= \tilde{u}_0(x) :=(\zeta u_0)(x) \text{ for } 1-\delta \leq x \leq 1,\notag
\end{align}
where $g$ is the function defined in Section \ref{sec2}. The next lemma guarantees the existence of smooth approximations to $u_0$ and $u$, which will be used in the proof of Theorem \ref{th1}.
\begin{lemma}\label{lem1}

Suppose that a pair ($u_0, e_0$) satisfies \Aref{5} and let $0 < \delta < 1$, $\delta \leq e_0 \leq 1 - \delta$, $M > 0$ and $T > 0$. If $u$ is a solution of AP1($e, u_0$) for $e \in K(\delta, e_{0}, M,T)$, then there exist approximate sequences $\{u_{0n} \}$ and $\{u_{n}\}$ to $u_0$ and $u$, respectively, such that
\begin{align*}
&\{u_{0 n} \} \subset C^{\infty}([1-\delta,1]), u_{0 n} \to u_0 \text{ in } W^{2,2}(1-\delta,1) \text{ as } n \to \infty,\\
&\hspace{1.5cm}-k_a u_{0 n x}(1) =  h( u_{0 n}(1) + \theta - u_b(0)) + \sigma( (u_{0 n}(1) + \theta)^4 - u_b(0)^4), \text{ and}\\
&\{u_{n}\} \subset W^{1,2}(0,T;W^{2,2}(1-\delta,1)) \text{ for any } n \in \mathbb{N},\\
&u_{n} \to u \text{ in } L^{2}(0,T;W^{2,2}(1-\delta,1))  \text{ as } n \to \infty,\\
& u_{n t} \to u_t \text{ in } L^{2}(0,T;L^{2}(1-\delta,1))  \text{ as } n \to \infty,\\
&u_{n}(t) = u_{0 n} \text{ on } [1-\delta,1]  \text{ for } t \leq 0.
\end{align*}
\end{lemma}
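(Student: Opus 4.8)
The plan is to build the spatial sequence $\{u_{0n}\}$ and the space–time sequence $\{u_n\}$ separately and then to glue them, the second being obtained from a \emph{one-sided} time mollification of the increment $u-u_0$.

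\emph{Step 1 (approximation of $u_0$).} Since $\delta \le e_0 \le 1-\delta$ we have $[1-\delta,1]\subset[e_0,1]$, so (A5) gives $u_0\in W^{2,2}(1-\delta,1)$. First I take any $\hat u_{0n}\in C^{\infty}([1-\delta,1])$ with $\hat u_{0n}\to u_0$ in $W^{2,2}(1-\delta,1)$ (standard mollification). Because $W^{2,2}(1-\delta,1)\hookrightarrow C^1([1-\delta,1])$, the traces converge: $\hat u_{0n}(1)\to u_0(1)$ and $\hat u_{0nx}(1)\to u_{0x}(1)$. Writing $r_n:=-k_a\hat u_{0nx}(1)-h(\hat u_{0n}(1)+\theta-u_b(0))-\sigma((\hat u_{0n}(1)+\theta)^4-u_b(0)^4)$, the compatibility in (A5) together with continuity of the nonlinearity yields $r_n\to 0$. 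I then fix one $\phi\in C^{\infty}([1-\delta,1])$ with $\phi(1)=0$, $\phi'(1)=1$, and set $u_{0n}:=\hat u_{0n}+(r_n/k_a)\phi$. Since $\phi(1)=0$ the value $u_{0n}(1)=\hat u_{0n}(1)$ is unchanged while $u_{0nx}(1)=\hat u_{0nx}(1)+r_n/k_a$, so a direct substitution shows $u_{0n}$ satisfies the required compatibility identity exactly; and $r_n\to 0$ keeps $u_{0n}\to u_0$ in $W^{2,2}(1-\delta,1)$.

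\emph{Step 2 (time regularity of $u$ and extension).} Because $e\le 1-\delta$ on $[0,T]$, the strip $(1-\delta,1)$ lies entirely in the crust region, where $u$ solves $c_a u_t=k_a u_{xx}$. Lemma \ref{lemma100} (the estimates \eqref{kiku} together with $u_{xx}\in L^2(Q_a(T,e))$) then gives $u\in W^{1,2}(0,T;L^2(1-\delta,1))\cap L^2(0,T;W^{2,2}(1-\delta,1))$, and the embedding $W^{1,2}(0,T;L^2)\hookrightarrow C([0,T];L^2)$ makes $u(0)=u_0$ meaningful in $L^2(1-\delta,1)$. Put $w:=u-u_0$ and extend it by $\bar w(t):=0$ for $t\in[-1,0]$. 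Since $w(0)=0$ there is no jump at $t=0$, so $\bar w\in W^{1,2}(-1,T;L^2(1-\delta,1))\cap L^2(-1,T;W^{2,2}(1-\delta,1))$ with $\bar w\equiv 0$ on $[-1,0]$.

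\emph{Step 3 (backward mollification and gluing).} I choose a mollifier $\rho_n\in C^\infty_c([0,1/n])$, $\rho_n\ge 0$, $\int\rho_n=1$, and set $w_n(t):=\int_0^{1/n}\bar w(t-s)\rho_n(s)\,ds$. This convolution uses only \emph{past} values $t-s\le t\le T$, all inside $[-1,T]$, so no extension beyond $t=T$ is needed; moreover for $t\le 0$ every argument $t-s$ is $\le 0$, hence $w_n(t)=0$ for $t\le 0$. As $\bar w\in L^2(-1,T;W^{2,2})$, the smoothed $w_n$ lies in $C^\infty$ in time with values in $W^{2,2}(1-\delta,1)$, so $w_n\in W^{1,2}(0,T;W^{2,2}(1-\delta,1))$; standard mollification gives $w_n\to w$ in $L^2(0,T;W^{2,2})$ and $w_{nt}=\bar w_t*\rho_n\to w_t$ in $L^2(0,T;L^2)$. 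Finally I set $u_n:=u_{0n}+w_n$. Then $u_n(t)=u_{0n}$ for $t\le 0$; $u_n\in W^{1,2}(0,T;W^{2,2}(1-\delta,1))$; and combining Step 1 with the convergences just obtained yields $u_n\to u$ in $L^2(0,T;W^{2,2})$ and $u_{nt}=w_{nt}\to u_t$ in $L^2(0,T;L^2)$, which are exactly the assertions of the lemma.

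The one point requiring care—and the main obstacle—is reconciling three competing demands on $\{u_n\}$: it must be \emph{smoother in time} than $u$ (so that the discrete time-difference manipulations in Proposition \ref{prop1} are legitimate), it must reproduce the smooth, compatibility-respecting data $u_{0n}$ \emph{exactly} on $\{t\le 0\}$, and it must still converge to $u$ in both stated norms. Mollifying $u$ directly fails all three at once. The resolution is to mollify only the increment $u-u_0$ (which vanishes at $t=0$) with a \emph{one-sided backward} kernel: the zero extension makes $w_n$ vanish for $t\le 0$ automatically, and the backward support keeps every convolution inside $[0,T]$, so that no artificial extension past $t=T$ is introduced.
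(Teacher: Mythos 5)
Your proof is correct. The time--regularization at its core is essentially the device the paper uses: the paper extends $u$ by $u_0$ for $t<0$ and takes the backward Steklov average $u_\nu(t)=\frac{1}{\nu}\int_{t-\nu}^{t}u\,d\tau$, which is exactly your one-sided backward convolution applied to $u_0+(u-u_0)$ with the box kernel $\frac{1}{\nu}\chi_{[0,\nu]}$ in place of your smooth $\rho_n$; both make the approximation constant in time for $t\le 0$ and never reach past $t=T$. Where you genuinely diverge is in the handling of the compatibility condition at $x=1$ and the spatial smoothness of $u_{0n}$. The paper also mollifies in space, but in a trace-preserving way: it sets $U^0_{\nu\varepsilon}=J_\varepsilon\ast u_{\nu xx}$ and reconstructs $u_{\nu\varepsilon}$ by integrating twice from $x$ to $1$, anchoring the integration constants at $u_{\nu x}(t,1)$ and $u_\nu(t,1)$; since $u_\nu(0)=u_0$, the nonlinear boundary identity of (A5) is inherited exactly with no correction, and a diagonal choice of $(\nu_n,\varepsilon_n)$ concludes, with $u_{0n}:=u_n(0)$. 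You instead approximate $u_0$ directly and restore the boundary identity exactly via the explicit corrector $(r_n/k_a)\phi$ with $\phi(1)=0$, $\phi'(1)=1$, then add the mollified increment $w_n$, which needs no spatial smoothing because only $W^{2,2}$ regularity in $x$ is claimed for $u_n$. Your route avoids the double-antiderivative construction and the diagonal argument at the cost of an explicit corrector; the paper's construction yields spatial smoothness of $u_n(t)$ for every $t$ (slightly more than the lemma states) and exact preservation of the traces at $x=1$ for free. Both deliver all the assertions of the lemma.
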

\begin{proof}
First, we put $u(t) = u_0$ for $t < 0$ and $u_{\nu}(t) = \frac{1}{\nu}\int^{t}_{t - \nu}u d\tau$ in $L^2(1-\delta, 1)$  for $t \leq T$ and  $\nu > 0$. Easily, we have $u_{\nu}(t) = u_0$ for $t \leq 0$, 
\[u_{\nu x} = \frac{1}{\nu}\int^{t}_{t - \nu}u_x d\tau \text{ and  }u_{\nu xx} = \frac{1}{\nu}\int^{t}_{t - \nu}u_{xx} d\tau \text{ in }L^2(1-\delta, 1) 
\text{ on } [0,T],\] since $u \in L^2(0,T; W^{2,2}(1-\delta, 1))$. For $t, t' \in [0,T]$ it is easy to see that
\begin{align*}
\left|u_{\nu}(t)- u_{\nu}(t')\right|_{L^2(1-\delta, 1)} %&= \left| \frac{1}{\nu}\int^{t}_{t - \nu}u d\tau -  \frac{1}{\nu}\int^{t'}_{t' - \nu}u d\tau \right|_{L^2(1-\delta, 1)}\\
\leq \frac{1}{\nu}\int^{0}_{- \nu}  \left|  u(t + \tau) -  u(t' + \tau)  \right|_{L^2(1-\delta, 1)} d\tau \to 0 \text{ as } t' \to t,
\end{align*}
and it implies $u_{\nu} \in C([0,T]; L^2(1-\delta,1))$. \\
Similarly, %we have $u_{\nu x}(t') \to u_{\nu x}(t)$ and $u_{\nu xx}(t') \to u_{\nu xx}(t)$ in $L^2(1-\delta, 1)$ as $ t' \to t$. Moreover, we have
we can show that $u_{\nu} \in C([0,T]; W^{2,2}(1  -\delta,1))$. Also, we obtain
\begin{align*}
&\int^{T}_{0}  \left|  u_{\nu}(t ) -  u(t)  \right|_{L^2(1-\delta, 1)}^2 dt \leq \frac{1}{\nu} \int^{T}_{0}  \left( \int^{0}_{ - \nu} \left| u(t + \tau) - u(t) \right|_{L^2(1-\delta, 1)} d\tau   \right)^2 dt\\
&\leq  \int^{0}_{ - \nu} \int^{T}_{0}   \left| u(t + \tau) - u(t) \right|_{L^2(1-\delta, 1)}^2 dt d\tau \text{ for } \nu > 0.
\end{align*}
Consequently, we see that $u_{\nu} \to u$ in $L^2(0,T; L^2(1-\delta, 1))$ as $\nu \to 0$.
Similarly, we have $u_{\nu} \to u$  in $L^2(0,T; W^{2,2}(1-\delta, 1))$ as $\nu \to 0$. Also, it holds that $u_{\nu t} \to u_t$ in $L^2(0,T;L^2(1-\delta, 1))$ as $\nu \to 0$. Moreover, it is clear that $u_{\nu}(0) = u_0 \geq 0$ on $[1-\delta, 1]$ and by \Aref{5} $-k_a u_{\nu x}(0,1) = g(0, u_{\nu}(0,1))$. Therefore, \[ -k_a u_{\nu x}(t,1) = g(t, u_{\nu}(t,1)) \text{ for } t \in \left(-\infty, T\right]. \] %Also, we have

Here, we note that $u_{\nu t} \in L^2(0,T;W^{2,2}(1-\delta, 1))$ for $\nu > 0$.
Next, let $J_{\varepsilon}$ be a mollifier in $\mathbb{R}$ for $\varepsilon > 0$. For $\varepsilon > 0$ and $\nu > 0$ we put
\begin{align*}
&U^0_{\nu \varepsilon}(t, x) = \int_{\mathbb{R}} u_{\nu xx}(t,y) J_{\varepsilon}(x-y)dy,\\
&U^1_{\nu \varepsilon}(t, x) = - \int^1_{x} U^0_{\nu \varepsilon}(t, \xi) d\xi + u_{\nu x}(t,1),\\
&u_{\nu \varepsilon}(t, x) = - \int^1_{x} U^1_{\nu \varepsilon}(t, \xi) d\xi + u_{\nu}(t,1) \text{ for } (t,x) \in (- \infty,T) \times (1-\delta, 1).%\text{ and } x \in (1-\delta,1).
\end{align*}
By $u_{\nu xxt} \in L^2(0,T;L^2(1-\delta, 1))$, we see that
\begin{align*}
U^0_{\nu \varepsilon t} &= J_{\varepsilon} \ast u_{\nu xxt} \text{ on } (0,T) \times (1-\delta, 1),\\
U^1_{\nu \varepsilon t}(t, x) &= - \int^1_{x} U^0_{\nu \varepsilon t}(t, \xi) d\xi + u_{\nu xt}(t,1) \text{ and }
u_{\nu \varepsilon t}(t, x) = - \int^1_{x} U^1_{\nu \varepsilon t}(t, \xi) d\xi + u_{\nu t}(t,1)
\end{align*}
for $(t, x) \in (0,T) \times (1-\delta,1)$. Also, it holds that $u_{\nu \varepsilon}(t) = u_{\nu \varepsilon}(0)$ for $t \leq 0$. Indeed, since $U^0_{\nu \varepsilon }(t) = J_{\varepsilon} \ast u_{\nu xx}(t) = J_{\varepsilon} \ast u_{0xx}$ for $t \leq 0$, we have
\begin{align*}
U^1_{\nu \varepsilon}(t, x) &= - \int^1_{x}  (J_{\varepsilon} \ast u_{0xx}) d\xi + u_{0 x}(1),\text{ and }
u_{\nu \varepsilon}(t, x) = u_{\nu \varepsilon}(0, x) 
\end{align*}
for $(t,x) \in (- \infty,0] \times [0, 1]$.
\par Moreover, it is clear that we have  $U^0_{\nu \varepsilon}(t) \in C^{\infty}([1-\delta,1])$ for $t \in [0,T]$ and 
\begin{align*}
\left|U^0_{\nu \varepsilon}(t) - U^0_{\nu \varepsilon}(t')\right|_{L^2(1-\delta, 1)} \leq \left|u_{\nu xx}(t) - u_{\nu xx}(t')\right|_{L^2(1-\delta, 1)} \text{ for } t, t' \in [0,T].
\end{align*}
Since $u_{\nu} \in C([0,T]; W^{2,2}(1-\delta, 1))$, we have $U^0_{\nu \varepsilon} \in C([0,T]; L^2(1-\delta, 1))$. Easily, we get 
\[ \left|U^0_{\nu \varepsilon}(t)\right|_{L^2(1-\delta, 1)} \leq \left|u_{\nu xx}(t)\right|_{L^2(1-\delta, 1)} \text{ and } U^0_{\nu \varepsilon}(t) \to u_{\nu xx}(t)\] in $L^2(1-\delta, 1)$ as $\varepsilon \to 0$ for $0 \leq t \leq T$. By applying the dominated convergence theorem,  we get $U^0_{\nu \varepsilon} \to u_{\nu xx}$ in $L^2(0,T; L^2(1-\delta, 1))$ as $\varepsilon \to 0$. Clearly $u_{\nu \varepsilon x} = U^1_{\nu \varepsilon}$ and $u_{\nu \varepsilon xx} = U^0_{\nu \varepsilon}$ on $(0,T) \times (1-\delta, 1)$, that is, $u_{\nu \varepsilon} \in C([0,T]; W^{2,2}(1-\delta, 1))$ for any $\nu$, $\varepsilon > 0$. Also, we have
\begin{align*}
|u_{\nu \varepsilon x}(t, x) - u_{\nu x}(t, x)| &= \left| \int^x_1 \frac{\partial}{\partial \xi}(u_{\nu \varepsilon x}(t, \xi) - u_{\nu x}(t, \xi))d\xi \right|\\
& \leq \int^1_{1-\delta} |U^0_{\nu \varepsilon}(t) - u_{\nu xx}(t)| d\xi \text{ for a.e. } t \in [0,T].
\end{align*}
This shows
\begin{align*}
u_{\nu \varepsilon x} \to  u_{\nu x} \text{ in } L^2(0,T;L^2(1-\delta, 1)) \text{ as } \varepsilon \to 0.
\end{align*}
Similarly, we have
\begin{align*}
u_{\nu \varepsilon} \to  u_{\nu} \text{ in } L^2(0,T;L^2(1-\delta, 1)) \text{ as } \varepsilon \to 0.
\end{align*}
Easily, we see that
\[ -k_a u_{\nu \varepsilon x}(0,1)= -k_a u_{\nu  x}(0,1) = g(0, u_{\nu}(0,1)) = g(0, u_{\nu \varepsilon}(0,1)).\]
 Also, we obtain
\begin{align*}
|u_{\nu \varepsilon t}(t, x) - u_{\nu t}(t, x)| &= \left| \int^1_x (u_{\nu \varepsilon tx}(t, \xi) - u_{\nu tx}(t, \xi)) d\xi \right|\\
& \leq  \int^1_0  \left| u_{\nu \varepsilon tx}(t, x) - u_{\nu tx}(t, x)\right| dx\\
& \leq  \int^1_0 \int^1_x  \left| u_{\nu \varepsilon txx}(t, \xi) - u_{\nu txx}(t, \xi)\right| d\xi dx\\
&\leq  \left| u_{\nu \varepsilon txx}(t) - u_{\nu txx}(t)\right|_{L^2(1-\delta, 1)}\\
&= \left|  (J_{\varepsilon} \ast u_{\nu txx})(t) - u_{\nu txx}(t)\right|_{L^2(1-\delta, 1)}\\
&\hspace{40mm} \text{for }  (t,x) \in (0,T) \times (1-\delta, 1).
\end{align*}
Hence, thanks to the basic property of the mollifier we have
\begin{align*}
u_{\nu \varepsilon t} \to  u_{\nu t} \text{ in } L^2(0,T;L^2(1-\delta, 1)) \text{ as } \varepsilon \to 0 \text{ for } \nu > 0.
\end{align*}
Accordingly, for any $n \in \mathbb{N}$, there exist $\nu_n > 0$ and $\varepsilon_n > 0$ such that
\[\left|u_{\nu_n} - u\right|_{ L^2(0,T;W^{2,2}(1-\delta, 1))} \leq \frac{1}{2n}, \left|u_{\nu_n t} - u_t\right|_{ L^2(0,T;L^2(1-\delta, 1))}  \leq \frac{1}{2n},\]
\[\left|u_{\nu_n} - u_{\nu_n \varepsilon_n}\right|_{ L^2(0,T;W^{2,2}(1-\delta, 1))} \leq \frac{1}{2n} \text{ and } \left|u_{\nu_n  \varepsilon_n t} - u_{\nu_n t}\right|_{ L^2(0,T;L^2(1-\delta, 1))}  \leq \frac{1}{2n}.\]
By putting $u_{n} = u_{\nu_n \varepsilon_n}$ for each $n$,  we have
\begin{align*}
&u_n \to u \text{ in } L^2(0,T;W^{2,2}(1-\delta, 1)),\\
&u_{nt} \to u_t \text{ in } L^2(0,T;L^2(1-\delta, 1)) \text{ as } n \to \infty.
\end{align*}
Easily, we get $u_n(0) = u_{\nu_n \varepsilon_n}(0) \in C^{\infty}([1-\delta, 1])$ and $-k_a u_{n x}(0,1) =g(0, u_{n}(0,1))$ for each $n$. By putting $u_{0n} = u_n(0)$ for $n$, we can prove Lemma \ref{lem1}.
\end{proof}
For any $n$ let \[\tilde{u}_n \in W^{1,2}(0, T; L^2(1-\delta, 1)) \cap L^{\infty}(0, T; W^{1,2}(1-\delta,1))\] be a solution of the following problem:
\begin{align}
c_a \tilde{u}_{n t} &= k_a (\tilde{u}_{n xx} - \zeta_{xx}u_{n} - 2 \zeta_x u_{ n x}) \text{ in } (0,T) \times (1-\delta, 1),\label{first}\\
- k_a \frac{\partial \tilde{u}_{n}}{\partial x}(t,1) &= h( [\tilde{u}_{n}(t,1) + \theta_c]^+ - u_b(t)) + \sigma( ([\tilde{u}_{n}(t,1) + \theta_c]^+)^4 - u_b(t)^4)\notag\\
&=: g_+(t, \tilde{u}_n(t,1)) \text{ for } 0 < t < T,\label{second}\\
\tilde{u}_{n}(t,1-\delta) &= 0 \text{ for } 0 < t < T,\label{third}\\
\tilde{u}_{n}(0,x) &= \tilde{u}_{0 n}(x)=(\zeta u_{0 n })(x) \text{ for } 1- \delta \leq x \leq 1.\label{forth}
\end{align}
Since  the equation \eqref{first} is linear and the boundary condition \eqref{second} is monotone, for given $u_n$ we can prove existence and uniqueness of the strong solution to the problem \eqref{first}-\eqref{forth} in the similar way to AP1$(e, u_0)$.
\begin{lemma}\label{lem3}
 Under \Aref{4} and \Aref{5}, let $\tilde{u}_n$ be the strong solution of \eqref{first}-\eqref{forth}. Then, there exists a constant $C > 0$ independent of $n$ such that $\left|\tilde{u}_n(t,1)\right| \leq C$ for $0 \leq t \leq T$.
\end{lemma}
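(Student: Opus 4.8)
The plan is to obtain the bound from a comparison (maximum) principle, by constructing explicit super- and subsolutions of \eqref{first}--\eqref{forth} whose boundary values at $x=1$ are controlled by a constant depending only on the data and \emph{not} on $n$. First I would rewrite \eqref{first} as the inhomogeneous heat equation $c_a\tilde u_{nt}=k_a\tilde u_{nxx}+F_n$ on $(0,T)\times(1-\delta,1)$ with source $F_n:=-k_a(\zeta_{xx}u_n+2\zeta_x u_{nx})$. The crucial preliminary observation is that this source is uniformly controlled: by Lemma \ref{lem1}, $u_n\to u$ in $L^2(0,T;W^{2,2}(1-\delta,1))$, so $\{u_n\}$ is bounded there, and since $W^{2,2}(1-\delta,1)\hookrightarrow C^1([1-\delta,1])$ the quantity $\Psi_n(t):=|F_n(t)|_{L^\infty(1-\delta,1)}$ satisfies $\Psi_n(t)\le C|u_n(t)|_{W^{2,2}(1-\delta,1)}$, whence $|\Psi_n|_{L^2(0,T)}$ is bounded independently of $n$. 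Likewise $u_{0n}\to u_0$ in $W^{2,2}(1-\delta,1)$ yields a uniform bound $K_{00}:=\sup_n|\tilde u_{0n}|_{L^\infty(1-\delta,1)}$, and (A4) together with $W^{1,2}(0,T)\hookrightarrow C([0,T])$ gives $|u_b|_{L^\infty(0,T)}<\infty$.

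Next I would record the three sign properties of the truncated boundary nonlinearity $g_+$ in \eqref{second} that drive the argument: (i) $g_+(t,\cdot)$ is nondecreasing, since $s\mapsto[s+\theta_c]^+$ and $x\mapsto hx+\sigma x^4$ are nondecreasing for $x\ge0$; (ii) $g_+(t,s)\ge-G'$ for all $s$, where $G':=h|u_b|_{L^\infty(0,T)}+\sigma|u_b|_{L^\infty(0,T)}^4$, obtained by discarding the nonnegative $[\,\cdot\,]^+$ terms; and (iii) $g_+(t,0)=h(\theta_c-u_b(t))+\sigma(\theta_c^4-u_b(t)^4)\le0$ by (A4). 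With these in hand I set $P(t):=\frac1{c_a}\int_0^t\Psi_n(s)\,ds$ (nondecreasing, with $P(T)\le\frac{\sqrt T}{c_a}|\Psi_n|_{L^2(0,T)}$ uniformly bounded), fix $L':=G'/k_a$ and $K_0:=K_{00}$, and define the barriers
\[
\Phi(t,x)=K_0+P(t)+L'\bigl(x-(1-\delta)\bigr),\quad \underline\Phi(t,x)=-K_0-P(t)-L'\bigl(x-(1-\delta)\bigr).
\]
A routine check shows $\Phi$ is a supersolution: the PDE inequality holds since $c_a\Phi_t-k_a\Phi_{xx}-F_n=\Psi_n(t)-F_n(t,x)\ge0$; at $x=1-\delta$ one has $\Phi\ge0=\tilde u_n$; at $x=1$, $-k_a\Phi_x(t,1)=-G'\le g_+(t,\Phi(t,1))$ by (ii); and $\Phi(0,\cdot)\ge\tilde u_{0n}$. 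Symmetrically $\underline\Phi$ is a subsolution, the boundary inequality now reading $-k_a\underline\Phi_x(t,1)=G'\ge g_+(t,\underline\Phi(t,1))$, which is valid because $\underline\Phi(t,1)\le0$ and (i),(iii) give $g_+(t,\underline\Phi(t,1))\le g_+(t,0)\le0$.

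The comparison itself I would carry out by the energy method rather than a pointwise maximum principle, since $\tilde u_n$ is only a strong solution: for $z:=\tilde u_n-\Phi$ one has $c_a z_t-k_a z_{xx}\le0$, and testing with $z^+=[\tilde u_n-\Phi]^+$ and integrating over $(1-\delta,1)$ yields $\frac{c_a}{2}\frac{d}{dt}|z^+|_{L^2}^2+k_a|z^+_x|_{L^2}^2\le k_a z_x(t,1)\,z^+(t,1)$, the trace at $x=1-\delta$ dropping out because $z\le0$ there. On $\{z^+(t,1)>0\}$ the boundary conditions for $\tilde u_n$ and $\Phi$ together with the monotonicity (i) force $z_x(t,1)\le0$, so the right-hand side is $\le0$; since $z^+(0)=0$ this gives $z^+\equiv0$, i.e. $\tilde u_n\le\Phi$, hence $\tilde u_n(t,1)\le K_0+P(T)+L'\delta$. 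The analogous computation with $\underline\Phi$ gives $\tilde u_n(t,1)\ge-(K_0+P(T)+L'\delta)$, and the common bound $C:=K_0+P(T)+L'\delta$ is independent of $n$. The main obstacle is precisely the indefinite, merely $L^2(0,T;L^\infty)$-bounded source $F_n$: a spatially constant barrier is incompatible with the growth it forces, so one must (a) absorb the source into the time-growing but uniformly bounded additive term $P(t)$, and (b) reconcile this growth with the dissipative nonlinear boundary condition by giving the barrier a small fixed positive slope $L'$, which is affordable only because $g_+$ is bounded below; verifying the one-sided boundary inequalities with the correct signs and making the $z^+$-estimate rigorous for the strong solution $\tilde u_n$ are the delicate technical points. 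As an alternative consistent with the paper's tools, once a uniform bound of $\tilde u_n$ in $L^\infty(0,T;W^{1,2}(1-\delta,1))$ is available one could instead read off $|\tilde u_n(t,1)|\le C_0|\tilde u_{nx}(t)|_{L^2}^{1/2}|\tilde u_n(t)|_{L^2}^{1/2}$ directly from the second inequality in Lemma \ref{lemgag}, using $\tilde u_n(t,1-\delta)=0$.
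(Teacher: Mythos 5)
Your barrier/comparison argument is correct, but it is a genuinely different route from the paper's. The paper proves Lemma \ref{lem3} by two successive energy estimates: testing \eqref{first} with $\tilde u_n$ (using the monotone structure of $g_+$ and Gronwall) to get an $L^\infty(0,T;L^2)\cap L^2(0,T;W^{1,2})$ bound, then testing with $\tilde u_{nt}$ (introducing the primitive $\hat g_+(t,r)=\int_0^r g_+(t,\xi)\,d\xi$ and using $\hat g_+\ge 0$) to get an $L^\infty(0,T;W^{1,2}(1-\delta,1))$ bound, and finally reading off $|\tilde u_n(t,1)|\le C$ from the trace inequality of Lemma \ref{lemgag} together with $\tilde u_n(t,1-\delta)=0$ --- exactly the alternative you mention in your closing sentence. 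Your main argument instead treats $-k_a(\zeta_{xx}u_n+2\zeta_x u_{nx})$ as an inhomogeneity that is uniformly bounded in $L^2(0,T;L^\infty(1-\delta,1))$ (which requires the full $L^2(0,T;W^{2,2})$ control of $u_n$ from Lemma \ref{lem1}, via the one-dimensional embedding $W^{2,2}\hookrightarrow C^1$, whereas the paper's energy route only consumes $L^2$-norms of $u_n$ and $u_{nx}$), and then compares $\tilde u_n$ with affine-in-$x$ barriers; the sign checks at $x=1$ using the monotonicity of $g_+(t,\cdot)$, the lower bound $g_+\ge -G'$ and $g_+(t,0)\le 0$ are all right, and carrying out the comparison by testing with $[\tilde u_n-\Phi]^+$ is the appropriate way to handle a merely strong solution. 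What each approach buys: yours yields a pointwise $L^\infty((0,T)\times(1-\delta,1))$ bound with an explicit constant built from the data, while the paper's energy estimates are not only a means to the trace bound but also produce the $L^\infty(0,T;W^{1,2})$ and $\tilde u_{nt}\in L^2$ information that feeds directly into the time-difference estimates of Lemma \ref{lem4}, so the paper gets the boundary bound essentially for free along the way.
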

\begin{proof}
First, by multiplying $\tilde{u}_{n}$ on both sides of  \eqref{first} and integrating it with $x$ on $[1-\delta, 1]$, we have
\begin{align*}
\int^1_{1-\delta} c_a \tilde{u}_{nt}\tilde{u}_n dx&=k_a \int^1_{1-\delta}  \tilde{u}_{nxx} \tilde{u}_ndx - k_a \int^1_{1-\delta} \zeta_{xx}  u_n \tilde{u}_n dx - 2 k_a \int^1_{1-\delta} \zeta_{x}  u_{nx} \tilde{u}_ndx\\
&=: I_1 + I_2 + I_3 \text{ a.e. on } [0,T].
\end{align*}
By elementary calculations, we obtain
\[\int^1_{1-\delta} c_a \tilde{u}_{nt}\tilde{u}_n dx = \frac{d}{dt}\frac{c_a}{2} \int^1_{1-\delta} \tilde{u}_n^2 dx,
\]
and
\[I_1 = -g_+(\cdot, \tilde{u}_n(\cdot,1))\tilde{u}_n(\cdot,1) - k_a \int^1_{1-\delta} \left|\tilde{u}_{nx}\right|^2 dx \text{ a.e. on } [0,T].
\]
Thanks to Young's inequality, we see that
\begin{align*}
I_2 \leq \frac{c_a}{4} \int^1_{1-\delta} \tilde{u}_n^2 dx + \frac{k_a^2 }{c_a}\left|\zeta_{xx}\right|^2_{L^{\infty}(0,1)} \int^1_{1-\delta} u_n^2 dx,
\end{align*}
and
\begin{align*}
I_3 \leq \frac{c_a}{4} \int^1_{1-\delta} \tilde{u}_n^2 dx + \frac{4 k_a^2}{c_a}  \left|\zeta_{x}\right|^2_{L^{\infty}(0,1)} \int^1_{1-\delta} \left|u_{nx}\right|^2 dx \text{ a.e. on } [0,T].
\end{align*}
Hence, we have
\begin{align*}
& \frac{d}{dt}\frac{c_a}{2} \int^1_{1-\delta} \tilde{u}_n^2 dx + k_a \int^1_{1-\delta} \left|\tilde{u}_{nx}\right|^2 dx\\
& \leq  -g_+(\cdot, \tilde{u}_n(\cdot,1))\tilde{u}_n(\cdot,1) + \frac{c_a}{2} \int^1_{1-\delta} \tilde{u}_n^2 dx\\
&\quad \, + \frac{k_a^2}{c_a} \left|\zeta_{xx}\right|^2_{L^{\infty}(0,1)} \int^1_{1-\delta} u_n^2 dx + \frac{4 k_a^2}{c_a} \left|\zeta_{x}\right|^2_{L^{\infty}(0,1)} \int^1_{1-\delta} \left|u_{nx}\right|^2 dx \text{ a.e. on } [0,T].
\end{align*}
Here, we estimate the first term of the above inequality by the monotonicity and Young's inequality. Namely, we have
\begin{align*}
& -g_+(\cdot, \tilde{u}_n(\cdot,1))\tilde{u}_n(\cdot,1)\\
&= -h [\tilde{u}_{n}(\cdot,1) + \theta_c]^+(\tilde{u}_n(\cdot,1) + \theta_c - \theta_c) +h u_b\tilde{u}_n(\cdot,1)\\
&\quad \,  - \sigma(([\tilde{u}_{n}(\cdot,1) + \theta_c]^+)^4 - \theta_c^4 )(\tilde{u}_n(\cdot,1) + \theta_c-\theta_c)-\sigma \theta_c^4 \tilde{u}_n(\cdot,1) + \sigma u_b^4\tilde{u}_n(\cdot,1)\\
&\leq h [\tilde{u}_{n}(\cdot,1) + \theta_c]^+\theta_c +h u_b\tilde{u}_n(\cdot,1)  -\sigma \theta_c^4 \tilde{u}_n(\cdot,1) + \sigma u_b^4\tilde{u}_n(\cdot,1)\\
&\leq C_1|\tilde{u}_{n}(\cdot,1)| + h \theta_c^2 \text{ a.e. on } [0,T],
\end{align*}
where $\displaystyle C_1 = h(\theta_c + \max_{0 \leq t \leq T}|u_b|) + \sigma(\theta_c^4 + \max_{0 \leq t \leq T}u_b^4)$. Moreover, Lemma \ref{lemgag} guarantees existence of a positive constant $C_0$ depending only on $\delta$ such that 
\begin{align*}
& -g_+(\cdot, \tilde{u}_n(\cdot,1))\tilde{u}_n(\cdot,1)\\
&\leq C_0 C_1  \left|\tilde{u}_{nx}\right|^{1/2}_{L^{2}(1-\delta,1)} \left|\tilde{u}_{n}\right|^{1/2}_{L^{2}(1-\delta,1)} + h \theta_c^2\\
&\leq \frac{k_a}{2}  \left|\tilde{u}_{nx}\right|^{2}_{L^{2}(1-\delta,1)} + C_2 ( \left|\tilde{u}_{n}\right|^{2}_{L^{2}(1-\delta,1)} + 1) \text{ a.e. on } [0,T],
\end{align*}
where $C_2$ is a positive constant independent of $n$. Thus, it follows that
\begin{align*}
& \frac{d}{dt}\frac{c_a}{2} \int^1_{1-\delta} \tilde{u}_n^2 dx + \frac{k_a}{2} \int^1_{1-\delta} \left|\tilde{u}_{nx}\right|^2 dx\\
& \leq  C_3\left( \int^1_{1-\delta} \tilde{u}_n^2 dx + \int^1_{1-\delta} u_n^2 dx + \int^1_{1-\delta} \left|u_{nx}\right|^2 dx \right) \text{ a.e. on } [0,T],
\end{align*}
where $C_3$ is a positive constant independent of $n$.
By applying Gronwall's inequality to above, there exists a positive constant $C_4$ such that
\begin{align*}
\frac{c_a}{2} \int^1_{1-\delta} \tilde{u}_n(t,x)^2 dx + \frac{k_a}{2}\int^t_0 \int^1_{1-\delta} \left|\tilde{u}_{nx}(\tau,x)\right|^2 dxd\tau \leq C_4 \text{ for } 0 \leq t \leq T.
\end{align*}
\par Next, by multiplying $\tilde{u}_{nt}$ on both sides of  \eqref{first} and integrating it with $x$ on $[1-\delta, 1]$, we have
\begin{align*}
\int^1_{1-\delta} c_a \tilde{u}_{nt}^2 dx&=k_a \int^1_{1-\delta}  \tilde{u}_{nxx} \tilde{u}_{nt} dx - k_a \int^1_{1-\delta} \zeta_{xx}  u_n \tilde{u}_{nt} dx\\
&\quad \,  - 2 k_a \int^1_{1-\delta} \zeta_{x}  u_{nx} \tilde{u}_{nt} dx \text{ a.e. on } [0,T].
\end{align*}
By integration by parts for the first term of the right hand side and applying Young's inequality, we see that
\begin{align*}
&c_a \int^1_{1-\delta}  \left|\tilde{u}_{nt}(t)\right|^2 dx + \frac{d}{dt}\left( \frac{k_a}{2}\int^1_{1-\delta}  \left|\tilde{u}_{nx}(t)\right|^2  dx + \hat{g}_+(t, \tilde{u}_n(t,1)) \right) \\
&= \frac{\partial \hat{g}_+}{\partial t}(t, \tilde{u}_n(t,1)) - k_a \int^1_{1-\delta} \zeta_{xx}  u_n(t) \tilde{u}_{nt}(t) dx - 2 k_a \int^1_{1-\delta} \zeta_{x}  u_{nx}(t) \tilde{u}_{nt}(t)dx\\
&\leq \frac{\partial \hat{g}_+}{\partial t}(t, \tilde{u}_n(t,1)) + \frac{c_a}{2}\int^1_{1-\delta}  \left|\tilde{u}_{nt}(t)\right|^2 dx + \frac{k_a^2}{c_a}\left|\zeta_{xx}\right|_{L^{\infty}(1-\delta, 1)}^2 \int^1_{1-\delta}u_n(t)^2 dx\\
&\quad \,  + \frac{4 k_a^2}{c_a}\left|\zeta_{x}\right|_{L^{\infty}(1-\delta, 1)}^2 \int^1_{1-\delta}\left|u_{nx}(t)\right|^2 dx \text{ for a.e. } t \in [0,T],
\end{align*}
where $\hat{g}_+(t, r) = \int^r_0 g_+(t,\xi)d\xi$ for $r \in \mathbb{R}$. Here, we note that the differentiability of the function \[t \to \frac{k_a}{2}\int^1_{1-\delta}  \left|\tilde{u}_{nx}(t)\right|^2  dx + \hat{g}_+(t, \tilde{u}_n(t,1))\] can be proved in the similar way to that of Lemma 4.4 in \cite{AK}. 
By Lemma \ref{lem1} we can take a positive constant $C_5$ independent of $n$ such that 
\begin{align*}
&\frac{c_a}{2} \int^1_{1-\delta}  \left|\tilde{u}_{nt}(t)\right|^2 dx + \frac{d}{dt} \left( \frac{k_a}{2}\int^1_{1-\delta}  \left|\tilde{u}_{nx}(t)\right|^2 dx  + \hat{g}_+(t, \tilde{u}_n(t,1)) \right)\\
&\leq  \frac{\partial \hat{g}_+}{\partial t}(t, \tilde{u}_n(t,1)) + C_5\\
&= -hu'_b(t)\tilde{u}_n(t,1) - 4\sigma u_b^3(t) u'_b(t) \tilde{u}_n(t,1) + C_5\\
&\leq C_6|u'_b(t)|\left( \int^1_{1-\delta}  \tilde{u}_{n}(t)^2 dx + \int^1_{1-\delta}  \left|\tilde{u}_{nx}(t)\right|^2 dx  \right) + C_5 \text{ for a.e. } t \in  [0,T],
\end{align*}
where $C_6$ is a positive constant. Since $\hat{g}_+(t, r) \geq 0$ for $t \in [0,T]$ and $r \geq 0$, from Gronwall's inequality, it follows that
\begin{align*}
&\frac{c_a}{2}\int^t_0 \int^1_{1-\delta}  \left|\tilde{u}_{nt}(\tau,x)\right|^2 dxd\tau + \frac{k_a}{2}\int^1_{1-\delta}  \left|\tilde{u}_{nx}(t,x)\right|^2 dx  + \hat{g}_+(t, \tilde{u}_n(t,1)) \\
&\leq C_7 \text{ for } 0 \leq t \leq T,
\end{align*}
where $C_7$ is a positive constant.
Since $\tilde{u}_n \in L^{\infty}(0, T; W^{1,2}(1-\delta,1))$ and $\tilde{u}_n(t, 1-\delta) = 0$, on account of Lemma 4.4 we can prove Lemma \ref{lem3}, completely.
\end{proof}

\begin{lemma}\label{lem2}
 Under \Aref{4} and \Aref{5}, let $\tilde{u}_n$ be the strong solution of \eqref{first}-\eqref{forth} for $n$ and $\tilde{u} = \zeta u$ on $(0,T) \times (0,1)$. Then, $\tilde{u}_n(\cdot,1) \to \tilde{u}(\cdot,1) \text{ in } L^2(0,T) \text{ as } n \to \infty$.
\end{lemma}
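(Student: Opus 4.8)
The plan is to control the difference $z_n := \tilde{u}_n - \tilde{u}$ by an energy estimate and then transfer the convergence to the trace at $x=1$ through the second inequality of Lemma \ref{lemgag}. The crucial preliminary observation is that $\tilde{u}(t,1)\ge 0$: since $e\le 1-\delta$, the strip $(1-\delta,1)$ lies in the crust region $Q_a(T,e)$, so Lemma \ref{prop3-1} gives $u\ge 0$ there, and $\zeta\ge 0$ yields $\tilde{u}=\zeta u\ge 0$. Consequently $[\tilde{u}(t,1)+\theta_c]^+=\tilde{u}(t,1)+\theta_c$, so that $g_+(t,\tilde{u}(t,1))=g(t,\tilde{u}(t,1))$. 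In other words, $\tilde{u}$ solves the very same equation \eqref{first} and the same truncated boundary condition \eqref{second} as $\tilde{u}_n$, only with the source terms built from $u$ instead of $u_n$.

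Subtracting the two systems, $z_n$ satisfies $c_a z_{nt}=k_a\big(z_{nxx}-\zeta_{xx}(u_n-u)-2\zeta_x(u_{nx}-u_x)\big)$ on $(0,T)\times(1-\delta,1)$, together with $z_n(t,1-\delta)=0$ and $-k_a z_{nx}(t,1)=g_+(t,\tilde{u}_n(t,1))-g_+(t,\tilde{u}(t,1))$. Multiplying by $z_n$ and integrating over $(1-\delta,1)$, the boundary term at $x=1$ arising from integrating $z_{nxx}z_n$ by parts equals $-\big(g_+(t,\tilde{u}_n(t,1))-g_+(t,\tilde{u}(t,1))\big)z_n(t,1)$, which is $\le 0$ because $r\mapsto g_+(t,r)$ is nondecreasing (the positive part and the fourth power of a nonnegative quantity are both monotone). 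Dropping this favorable term and estimating the two source integrals by Young's inequality, I obtain
\begin{align*}
\frac{d}{dt}\frac{c_a}{2}\int_{1-\delta}^1 z_n^2\,dx + k_a\int_{1-\delta}^1 z_{nx}^2\,dx \le C\int_{1-\delta}^1 z_n^2\,dx + C\int_{1-\delta}^1\big((u_n-u)^2+(u_{nx}-u_x)^2\big)\,dx
\end{align*}
a.e. on $[0,T]$, with $C$ independent of $n$.

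Integrating in time and applying Gronwall's inequality bounds $\sup_{[0,T]}|z_n(t)|_{L^2(1-\delta,1)}^2$ and $|z_{nx}|_{L^2(0,T;L^2(1-\delta,1))}^2$ by a constant times $|z_n(0)|_{L^2(1-\delta,1)}^2+|u_n-u|_{L^2(0,T;W^{1,2}(1-\delta,1))}^2$. By Lemma \ref{lem1} this right-hand side tends to $0$ as $n\to\infty$ (recall $z_n(0)=\zeta(u_{0n}-u_0)$ with $u_{0n}\to u_0$ in $W^{2,2}$, while $u_n\to u$ in $L^2(0,T;W^{2,2})$, hence also $u_{nx}\to u_x$ in $L^2(0,T;L^2)$). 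Thus $z_n\to 0$ in $L^\infty(0,T;L^2(1-\delta,1))$ and $z_{nx}\to 0$ in $L^2(0,T;L^2(1-\delta,1))$. Finally, since $z_n(t,1-\delta)=0$, the second inequality of Lemma \ref{lemgag} gives $|z_n(t,1)|\le C_0|z_{nx}(t)|_{L^2(1-\delta,1)}^{1/2}|z_n(t)|_{L^2(1-\delta,1)}^{1/2}$, whence by the Cauchy--Schwarz inequality
\begin{align*}
\int_0^T |z_n(t,1)|^2\,dt \le C_0^2\,|z_n|_{L^\infty(0,T;L^2(1-\delta,1))}\,T^{1/2}\,|z_{nx}|_{L^2(0,T;L^2(1-\delta,1))}\to 0,
\end{align*}
which is exactly $\tilde{u}_n(\cdot,1)\to\tilde{u}(\cdot,1)$ in $L^2(0,T)$. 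The one point requiring genuine care is the sign observation $\tilde{u}\ge 0$: it is what allows $g$ to be replaced by the monotone truncation $g_+$ in the comparison and the boundary term to be discarded. Once that is secured, the remaining estimates are routine.
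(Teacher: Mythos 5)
Your proof is correct and follows essentially the same route as the paper's: an energy estimate for $\tilde{u}_n-\tilde{u}$ in which the boundary term is discarded by the monotonicity of $g_+$, then Gronwall, then a trace estimate. The only differences are cosmetic --- you justify explicitly that $\tilde{u}(t,1)\ge 0$ so that $g$ may be replaced by $g_+$ (a point the paper uses implicitly), and you pass to the trace via the second inequality of Lemma \ref{lemgag} rather than the identity $\tilde{u}_n(t,1)-\tilde{u}(t,1)=\int_{1-\delta}^{1}\partial_x\bigl(x(\tilde{u}_n-\tilde{u})\bigr)\,dx$; both yield the same conclusion.
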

\begin{proof}
By multiplying $\tilde{u}_n - \tilde{u}$ on both sides of difference of \eqref{first} and \eqref{naga} for $n$, we have
\begin{align*}
&\int^1_{1-\delta} c_a (\tilde{u}_{nt} - \tilde{u}_t)(\tilde{u}_n - \tilde{u})dx\\
&=k_a \int^1_{1-\delta}  (\tilde{u}_{nxx} - \tilde{u}_{xx})(\tilde{u}_n - \tilde{u})dx  - k_a \int^1_{1-\delta} \zeta_{xx}  (u_n - u)(\tilde{u}_n - \tilde{u})dx\\
&\quad \, - 2 k_a \int^1_{1-\delta} \zeta_{x}  (u_{nx} - u_x)(\tilde{u}_n - \tilde{u})dx =: I_1 + I_2 + I_3 \text{ a.e. on } [0,T].
\end{align*}
On the left hand side, we see that
\begin{align*}
\int^1_{1-\delta} c_a (\tilde{u}_{nt} - \tilde{u}_t)(\tilde{u}_n - \tilde{u})dx = \frac{d}{dt} \frac{c_a}{2} \int^1_{1-\delta} \left|\tilde{u}_n - \tilde{u}\right|^2 dx \text{ a.e. on } [0,T].
\end{align*}
Easily, we observe that
\begin{align*}
I_1 &= k_a (\tilde{u}_{nx}(\cdot, 1) - \tilde{u}_{x}(\cdot, 1))(\tilde{u}_n(\cdot, 1) - \tilde{u}(\cdot, 1))\\
&\quad \, - k_a \int^1_{1-\delta}  \left|\tilde{u}_{nx} - \tilde{u}_{x}\right|^2 dx \text{ a.e. on } [0,T].
\end{align*}
Thanks to Young's inequality, we have
\begin{align*}
I_2 \leq \frac{c_a}{4} \int^1_{1-\delta}  \left|\tilde{u}_{n} - \tilde{u}\right|^2 dx + C_1 \int^1_{1-\delta} \left|u_n - u\right|^2 dx \text{ a.e. on } [0,T],
\end{align*}
and
\begin{align*}
I_3 \leq \frac{c_a}{4} \int^1_{1-\delta}  \left|\tilde{u}_{n} - \tilde{u}\right|^2 dx + C_2 \int^1_{1-\delta} \left|u_{nx} - u_x\right|^2 dx \text{ a.e. on } [0,T],
\end{align*}
where $C_1$ and $C_2$ are positive constants. Hence, we obtain
\begin{align*}
&\frac{d}{dt} \frac{c_a}{2} \int^1_{1-\delta} \left|\tilde{u}_n - \tilde{u}\right|^2 dx +  k_a \int^1_{1-\delta}  \left|\tilde{u}_{nx} - \tilde{u}_{x}\right|^2 dx\\
&\leq  k_a (\tilde{u}_{nx}(\cdot, 1) - \tilde{u}_{x}(\cdot, 1))(\tilde{u}_n(\cdot, 1) - \tilde{u}(\cdot, 1)) + \frac{c_a}{2} \int^1_{1-\delta}  \left|\tilde{u}_{n} - \tilde{u}\right|^2 dx\\
&\quad \, + C_1 \int^1_{1-\delta} \left|u_n - u\right|^2 dx + C_2 \int^1_{1-\delta} \left|u_{nx} - u_x\right|^2 dx\\
&=-(g_+(\cdot, \tilde{u}_n(\cdot, 1)) - g_+(\cdot, \tilde{u}(\cdot, 1)))(\tilde{u}_n(\cdot, 1) - \tilde{u}(\cdot, 1)) \\
&\quad \, + \frac{c_a}{2} \int^1_{1-\delta}  \left|\tilde{u}_{n} - \tilde{u}\right|^2 dx + C_1 \int^1_{1-\delta} \left|u_n - u\right|^2 dx + C_2 \int^1_{1-\delta} \left|u_{nx} - u_x\right|^2 dx\\
&\leq  \frac{c_a}{2} \int^1_{1-\delta}  \left|\tilde{u}_{n} - \tilde{u}\right|^2 dx + C_1 \int^1_{1-\delta} \left|u_n - u\right|^2 dx + C_2 \int^1_{1-\delta} \left|u_{nx} - u_x\right|^2 dx\\
&\hspace{80mm} \text{ a.e. on } [0,T].
\end{align*}
In the last inequality we use the monotonicity of $g_+$. By applying Gronwall's inequality, we have
\begin{align*}
& \frac{c_a}{2} \int^1_{1-\delta} \left|\tilde{u}_n(t,x) - \tilde{u}(t,x)\right|^2 dx +  k_a\int^t_0 \int^1_{1-\delta}  \left|\tilde{u}_{nx}(\tau, x) - \tilde{u}_{x}(\tau,x)\right|^2 dxd\tau\\
&\leq C_3 \left( \int^1_{1-\delta} \left|u_{0n} - u_0\right|^2 dx + \int^t_0  \int^1_{1-\delta}  \left|u_n(\tau, x) - u(\tau, x)\right|^2 dxd\tau \right) \text{ for } 0 \leq t \leq T,
\end{align*}
where $C_3$ is a positive constant independent of $n$.
By Lemma \ref{lem1}, it follows that
\begin{align*}
&\left|\tilde{u}_n - \tilde{u}\right|_{L^2(0,T; W^{1,2}(1-\delta, 1))} \to 0 \text{ as } n \to \infty.
\end{align*}
Hence, we have
\begin{align*}
&\left|\tilde{u}_n(\cdot,1) - \tilde{u}(\cdot,1)\right|_{L^2(0,T)}^2\\
&= \int^T_0 \left| \int^1_{1-\delta} \frac{\partial}{\partial x}\left(x(\tilde{u}_n(t,x) - \tilde{u}(t,x))\right)dx \right|^2 dt\\
&\leq 2 \left|\tilde{u}_n - \tilde{u}\right|_{L^2(0,T; W^{1,2}(1-\delta, 1))}^2 \to 0 \text{ as } n \to \infty.
\end{align*}
This shows that Lemma \ref{lem2} holds.
\end{proof}
The next lemma is a key in the proof of Theorem \ref{th1}, since it guarantees the uniform estimate for the derivative of solutions on the boundary $x=1$. In order to obtain the estimate we extend $\tilde{u}_n$ by
\begin{align}
&\tilde{u}_n(t) = \tilde{u}_{0n} + t \frac{k_a}{c_a}(\tilde{u}_{0nxx} - \zeta_{xx}u_{0n} - 2\zeta_xu_{0nx}) \text{ for } t < 0 \text{ and } n.\label{star}
\end{align}
By Lemma \ref{lem1}, it is clear that $\tilde{u}_{nt} \in L^2(-1,T;L^2(1-\delta, 1))$ for each $n$.
\begin{lemma}\label{lem4}
 Assume \Aref{4} and \Aref{5}, let $u$ be a solution of AP1($e, u_0$) for $e \in (\delta, e_0, M, T)$, where $\delta > 0$, $\delta \leq e_0 \leq 1-\delta$, $M > 0$ and $T > 0$, and put $\tilde{u} = \zeta u$. Then, there exists a positive constant $C'$ depending only on $\delta > 0$, $\left|u_b\right|_{W^{1,2}(0,T)}$, $\left|u_{0xx}\right|_{L^{2}(1-\delta,1)}$ and $\left|u_0\right|_{W^{1,2}(0,1)}$ such that
\begin{equation*}
\left|\tilde{u}_t(t)\right|_{L^2(1-\delta,1)}^2 + \int^T_0 \left|\tilde{u}_t(t)\right|_{W^{1,2}(1-\delta,1)} dt \leq C' \text{ for } 0 \leq t \leq T.
\end{equation*}
\end{lemma}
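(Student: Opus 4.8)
The plan is to prove the bound first for the approximations $\tilde u_n$ solving \eqref{first}-\eqref{forth}, with a constant independent of $n$, and then to pass to the limit $n\to\infty$ using the convergences of Lemma \ref{lem1} together with Lemma \ref{lem2}. For each fixed $n$ the regularity $u_n\in W^{1,2}(0,T;W^{2,2}(1-\delta,1))$ from Lemma \ref{lem1} makes the source terms $\zeta_{xx}u_n$ and $\zeta_x u_{nx}$ in \eqref{first} differentiable in time, so $\tilde u_n$ is regular enough to justify differentiating \eqref{first} in $t$ (equivalently, one works with the time-difference quotient $h^{-1}(\tilde u_n(t)-\tilde u_n(t-h))$, which is where the extension \eqref{star} to $t<0$ enters; the difference form also lets us use the monotonicity of $g_+$ in its second argument without assuming $g_+$ is $C^1$ in $r$). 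I therefore test the time-differentiated equation $c_a\tilde u_{ntt}=k_a(\tilde u_{ntxx}-\zeta_{xx}u_{nt}-2\zeta_x u_{ntx})$ with $\tilde u_{nt}$ and integrate over $(1-\delta,1)$.

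The left-hand side gives $\tfrac{c_a}{2}\tfrac{d}{dt}|\tilde u_{nt}|_{L^2(1-\delta,1)}^2$, and integrating the term $k_a\int\tilde u_{ntxx}\tilde u_{nt}$ by parts produces $-k_a|\tilde u_{ntx}|_{L^2}^2$ plus the boundary contribution $k_a\tilde u_{ntx}(t,1)\tilde u_{nt}(t,1)$; the endpoint $x=1-\delta$ drops out because $\tilde u_n(t,1-\delta)=0$. Differentiating the Robin condition \eqref{second} in time and using that $g_+(t,\cdot)$ is nondecreasing (so the nonnegative term $\tfrac{\partial g_+}{\partial r}|\tilde u_{nt}(t,1)|^2$ may be discarded), this boundary term is bounded above by $-\tfrac{\partial g_+}{\partial t}(t,\tilde u_n(t,1))\,\tilde u_{nt}(t,1)$ with $\tfrac{\partial g_+}{\partial t}=-(h+4\sigma u_b^3)u_b'$. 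Since $\tilde u_{nt}(t,1-\delta)=0$, Lemma \ref{lemgag} gives $|\tilde u_{nt}(t,1)|\le C_0|\tilde u_{ntx}|_{L^2}^{1/2}|\tilde u_{nt}|_{L^2}^{1/2}$, and two successive applications of Young's inequality bound this boundary term by $\varepsilon|\tilde u_{ntx}|_{L^2}^2+C(|u_b'(t)|^2+|\tilde u_{nt}|_{L^2}^2)$, where $C$ depends on $\delta$ and on $\sup|u_b|$, controlled by $|u_b|_{W^{1,2}(0,T)}$.

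The main obstacle is the source term $-2k_a\int\zeta_x u_{ntx}\tilde u_{nt}$, because no uniform-in-$n$ control of $u_{ntx}$ is available. I would remove the derivative on $u_{ntx}$ by integrating by parts in $x$: the boundary terms vanish precisely because $\zeta'(1-\delta)=\zeta'(1)=0$, leaving $2k_a\int\zeta_{xx}u_{nt}\tilde u_{nt}+2k_a\int\zeta_x u_{nt}\tilde u_{ntx}$, which after Young's inequality is controlled by $\varepsilon|\tilde u_{ntx}|_{L^2}^2+C(|u_{nt}|_{L^2}^2+|\tilde u_{nt}|_{L^2}^2)$, requiring only $u_{nt}\in L^2$ — uniformly bounded by Lemma \ref{lemma100} and the convergence $u_{nt}\to u_t$ of Lemma \ref{lem1}. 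The remaining source $-k_a\int\zeta_{xx}u_{nt}\tilde u_{nt}$ is handled the same way. Choosing $\varepsilon$ small and absorbing the $\tilde u_{ntx}$-terms, I reach $\tfrac{c_a}{2}\tfrac{d}{dt}|\tilde u_{nt}|_{L^2}^2+\tfrac{k_a}{2}|\tilde u_{ntx}|_{L^2}^2\le C(|u_b'(t)|^2+|\tilde u_{nt}|_{L^2}^2+|u_{nt}|_{L^2}^2)$. Gronwall's inequality, with the uniform initial bound $|\tilde u_{nt}(0)|_{L^2}=\tfrac{k_a}{c_a}|\zeta u_{0nxx}|_{L^2}\le \tfrac{k_a}{c_a}|u_{0nxx}|_{L^2(1-\delta,1)}$ (read off from \eqref{star} using $\tilde u_{0nxx}-\zeta_{xx}u_{0n}-2\zeta_x u_{0nx}=\zeta u_{0nxx}$), with $\int_0^T|u_b'|^2\le|u_b|_{W^{1,2}(0,T)}^2$ and with the uniform bound on $\int_0^T|u_{nt}|_{L^2}^2$ from Lemma \ref{lemma100}, then yields $\sup_t|\tilde u_{nt}|_{L^2}^2+\int_0^T|\tilde u_{ntx}|_{L^2}^2\,dt\le C'$ uniformly in $n$. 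Finally, weak-$*$ lower semicontinuity transfers these bounds to $\tilde u_t$, the weak limit being identified with $\tilde u_t$ via $\tilde u_n\to\tilde u$ in $L^2(0,T;W^{1,2}(1-\delta,1))$ (established in the proof of Lemma \ref{lem2}), and the asserted $\int_0^T|\tilde u_t|_{W^{1,2}(1-\delta,1)}\,dt\le C'$ follows from $\sup_t|\tilde u_t|_{L^2}^2+\int_0^T|\tilde u_{tx}|_{L^2}^2\,dt\le C'$ by the Cauchy–Schwarz inequality in $t$.
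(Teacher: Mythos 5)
Your proposal is correct and follows essentially the same route as the paper: uniform estimates for the approximations $\tilde u_n$ via the time-difference quotient (with the extension \eqref{star} supplying the initial increment $\Delta t\,\tfrac{k_a}{c_a}\zeta u_{0nxx}$), integration by parts in $x$ on the $\zeta_x u_{ntx}$ source term using $\zeta'(1-\delta)=\zeta'(1)=0$, control of the boundary term through the monotonicity of $g_+$ together with Lemma \ref{lemgag} and Young's inequality, Gronwall, and passage to the limit in $n$ via Lemma \ref{lem2}. The only cosmetic difference is that you phrase the argument as differentiating the equation in time while the paper works strictly with difference quotients (splitting $(0,\Delta t)$ from $(\Delta t,T)$), but you explicitly note that the difference-quotient form is the rigorous implementation, so the two are the same proof.
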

\begin{proof}
Let $\Delta t > 0$. In case $0 < t < \Delta t$, we see that
\begin{align*}
&\int^1_{1-\delta} c_a \frac{\tilde{u}_{n t}(t) - \tilde{u}_{n t}(t - \Delta t)}{\Delta t} \frac{\tilde{u}_{n }(t) - \tilde{u}_{n }(t - \Delta t)}{\Delta t}dx\\
&= \int^1_{1-\delta} k_a \frac{\tilde{u}_{n xx}(t) - \tilde{u}_{0 n xx}}{\Delta t} \frac{\tilde{u}_{n }(t) - \tilde{u}_{n }(t - \Delta t)}{\Delta t}dx\\
&\quad \, - \int^1_{1-\delta} k_a \frac{\zeta_{xx} u_{n }(t) - \zeta_{xx} u_{0 n }}{\Delta t} \frac{\tilde{u}_{n }(t) - \tilde{u}_{n }(t - \Delta t)}{\Delta t}dx\\
&\quad \, -2\int^1_{1-\delta} k_a \frac{\zeta_x u_{n x}(t) -\zeta_x u_{0 n x}}{\Delta t} \frac{\tilde{u}_{n }(t) - \tilde{u}_{n }(t - \Delta t)}{\Delta t}dx\\ % \text{ for a.e. } t \in 
&=: I_1(t) + I_2(t) + I_3(t) \text{ for a.e. } t \in (0, \Delta t).
\end{align*}
Easily, by integration by parts we observe that
\begin{align*}
I_1(t) &=-  \int^1_{1-\delta} k_a \frac{\tilde{u}_{n x}(t) - \tilde{u}_{0 n x}}{\Delta t} \frac{\tilde{u}_{n x}(t) - \tilde{u}_{n x}(t - \Delta t)}{\Delta t}dx\\
&\quad \,  + k_a \frac{\tilde{u}_{n x}(t,1) - \tilde{u}_{0 n x}(1)}{\Delta t} \frac{\tilde{u}_{n }(t,1) - \tilde{u}_{n }(t - \Delta t,1)}{\Delta t}\\
&=: I_{11}(t) + I_{12}(t) \text{ for a.e. } t \in (0, \Delta t).
\end{align*}
By the definition \eqref{star} of $\tilde{u}_n$ and Young's inequality, we have
\begin{align*}
I_{11}(t)%&= -k_a  \int^1_{1-\delta}\left|  \frac{\tilde{u}_{n x}(t) - \tilde{u}_{n x}(t - \Delta t)}{\Delta t}\right|^2 dx\\
&\leq -\frac{k_a}{2}  \int^1_{1-\delta}\left|  \frac{\tilde{u}_{n x}(t) - \tilde{u}_{n x}(t - \Delta t)}{\Delta t}\right|^2 dx\\
&\quad \, + \frac{k_a^3}{2 c_a^2} \frac{\left|t-\Delta t\right|^2}{(\Delta t)^2} \int^1_{1-\delta} \left| \tilde{u}_{0nxxx} - (\zeta_{xx}u_{0n})_x - 2(\zeta_xu_{0nx})_x \right|^2 dx\\
&\hspace{80mm} \text{ for a.e. } t \in (0, \Delta t).
\end{align*}
Next, we estimate $I_{12}$ with respect to $n$ and $\Delta t$. Clearly, by \Aref{5} and the definition of $g_+$, we have
\begin{align*}
I_{12}(t)&= -\frac{1}{\Delta t}\left(g_+(t, \tilde{u}_n(t,1)) - g_+(0, \tilde{u}_{0n}(1))\right)\frac{\tilde{u}_{n }(t,1) - \tilde{u}_{n }(t - \Delta t,1)}{\Delta t}\\
&= - \frac{h}{\Delta t}\left([ \tilde{u}_n(t,1) + \theta_c]^+ - [\tilde{u}_{0n}(1) + \theta_c]^+\right)\frac{\tilde{u}_{n }(t,1) - \tilde{u}_{n }(t - \Delta t,1)}{\Delta t}\\
&\quad \, - \frac{\sigma}{\Delta t}\left(([ \tilde{u}_n(t,1) + \theta_c]^+)^4 - ([\tilde{u}_{0n}(1) + \theta_c]^+)^4\right)\frac{\tilde{u}_{n }(t,1) - \tilde{u}_{n }(t - \Delta t,1)}{\Delta t}\\
&\quad \, - \left \{\frac{h}{\Delta t}\left(u_b(t) - u_b(0)\right)  + \frac{\sigma}{\Delta t}\left(u_b(t)^4 - u_b(0)^4\right) \right \}\frac{\tilde{u}_{n }(t,1) - \tilde{u}_{n }(t - \Delta t,1)}{\Delta t}\\
&=: I_{12}^{(1)}(t) + I_{12}^{(2)}(t) +  I_{12}^{(3)}(t) \text{ for a.e. } t \in (0, \Delta t),
\end{align*}
where $h$ and $\sigma$ are positive constants in the boundary condition. For the first term, thanks to \[\tilde{u}_{0n}(1) = \tilde{u}_n(t - \Delta t, 1) - (t - \Delta t) \frac{k_a}{c_a}\tilde{u}_{0nxx}(1)\] for $t \in (0, \Delta t)$, the monotonicity and Lipschitz continuity of the positive part imply that
 \begin{align*}
  I_{12}^{(1)}(t) &=   - \frac{h}{\Delta t}([ \tilde{u}_n(t,1) + \theta_c]^+ - [\tilde{u}_{n}(t - \Delta t, 1) + \theta_c]^+)\frac{\tilde{u}_{n }(t,1) - \tilde{u}_{n }(t - \Delta t,1)}{\Delta t}\\
  &\quad \,   - \frac{h}{\Delta t}([ \tilde{u}_n(t - \Delta t,1) + \theta_c]^+ - [\tilde{u}_{0n}(1) + \theta_c]^+)\frac{\tilde{u}_{n }(t,1) - \tilde{u}_{n }(t - \Delta t,1)}{\Delta t}\\
  &\leq  \frac{h}{\Delta t}|\tilde{u}_n(t - \Delta t,1) - \tilde{u}_{0n}(1)| \left| \frac{\tilde{u}_{n }(t,1) - \tilde{u}_{n }(t - \Delta t,1)}{\Delta t} \right|\\
  &\leq  \frac{C_0 k_a h |t - \Delta t|}{c_a \Delta t}|\tilde{u}_{0nxx}(1)| \left( \left| \frac{\tilde{u}_{n }(t) - \tilde{u}_{n }(t - \Delta t)}{\Delta t}\right|_{L^2(1-\delta,1)} \right.\\
  &\quad \, \left. + \left| \frac{\tilde{u}_{nx }(t) - \tilde{u}_{nx }(t - \Delta t)}{\Delta t}\right|_{L^2(1-\delta,1)}^{1/2} \left| \frac{\tilde{u}_{n }(t) - \tilde{u}_{n }(t - \Delta t)}{\Delta t}\right|_{L^2(1-\delta,1)}^{1/2} \right)\\
&\hspace{80mm} \text{ for a.e. } t \in (0, \Delta t).
 \end{align*}
 We note that Lemma \ref{lemgag} is applied in the last inequality. According to Young's inequality, it yields that
  \begin{align*}
  I_{12}^{(1)}(t)&\leq \frac{k_a}{16} \left| \frac{\tilde{u}_{nx }(t) - \tilde{u}_{nx }(t - \Delta t)}{\Delta t}\right|_{L^2(1-\delta,1)}^2\\
& \quad \, + C_1 \left( \left| \frac{\tilde{u}_{n }(t) - \tilde{u}_{n }(t - \Delta t)}{\Delta t}\right|_{L^2(1-\delta,1)}^2 + \left|\tilde{u}_{0nxx}(1)\right|^2 \right) \text{ for a.e. } t \in (0, \Delta t),
 \end{align*} 
 where $C_1$ is a positive constant independent of $n$ and $\Delta t$. Similarly, on account  of Lemma~\ref{lem3}, we observe that
   \begin{samepage}
   \begin{align*}
  I_{12}^{(2)}(t) &=  - \frac{\sigma}{\Delta t}\left(\left([ \tilde{u}_n(t,1) + \theta_c]^+\right)^4 - \left([\tilde{u}_{n}(t - \Delta t, 1)+ \theta_c]^+\right)^4\right)\frac{\tilde{u}_{n }(t,1) - \tilde{u}_{n }(t - \Delta t,1)}{\Delta t}\\
  &\quad \,  - \frac{\sigma}{\Delta t}\left(\left([ \tilde{u}_{n}(t - \Delta t, 1) + \theta_c]^+\right)^4 - \left([\tilde{u}_{0n}(1) + \theta_c]^+\right)^4\right)\frac{\tilde{u}_{n }(t,1) - \tilde{u}_{n }(t - \Delta t,1)}{\Delta t}\\
  &\leq  \frac{\sigma}{\Delta t}\left|\tilde{u}_{n }(t - \Delta t,1) - \tilde{u}_{0n}(1)\right|\left(\left|\tilde{u}_{n }(t - \Delta t,1) + \theta_c\right| + \left| \tilde{u}_{0n}(1) + \theta_c\right|\right)\\
  & \quad \, \left( \left|\tilde{u}_{n }(t - \Delta t,1) + \theta_c\right|^2 + \left| \tilde{u}_{0n}(1) + \theta_c\right|^2\right) \left| \frac{\tilde{u}_{n }(t,1) - \tilde{u}_{n }(t - \Delta t,1)}{\Delta t} \right|\\
  &\leq \frac{k_a}{16} \left| \frac{\tilde{u}_{nx }(t) - \tilde{u}_{nx }(t - \Delta t)}{\Delta t}\right|_{L^2\left(1-\delta,1\right)}^2\\
& \quad \, + C_2 \left( \left| \frac{\tilde{u}_{n }(t) - \tilde{u}_{n }(t - \Delta t)}{\Delta t}\right|_{L^2\left(1-\delta,1\right)}^2 + \left|\tilde{u}_{0nxx}(1)\right|^2 \right) \text{ for a.e. } t \in \left(0, \Delta t\right),
   \end{align*}
\end{samepage}%
where $C_2$ is a positive constant independent of $n$ and $\Delta t$. Here, the last inequality holds, based on the boundedness of $\tilde{u}_{0n}(1)$ guaranteed by Lemma \ref{lem1}. On $I_{12}^{(3)}$, by putting $u_b(t) = u_b(0)$ for $t < 0$ and applying Young's inequality, we have
    \begin{align*}
  I_{12}^{(3)}(t)%&=  - \left \{\frac{h}{\Delta t}(u_b(t) - u_b(0))  + \frac{\sigma}{\Delta t}(u_b(t)^4 - u_b(0)^4) \right \}\frac{\tilde{u}_{n }(t,1) - \tilde{u}_{n }(t - \Delta t,1)}{\Delta t}\\
   &\leq \frac{h}{2} \left| \frac{u_b(t) -  u_b(t - \Delta t)}{\Delta t} \right|^2 +  \frac{h}{2}\left| \frac{\tilde{u}_{n }(t,1) - \tilde{u}_{n }(t - \Delta t,1)}{\Delta t} \right|^2\\
  &\quad \, + 4 \sigma M_b^3 \left| \frac{u_b(t) -  u_b(t - \Delta t)}{\Delta t}\right| \left| \frac{\tilde{u}_{n }(t,1) - \tilde{u}_{n }(t - \Delta t,1)}{\Delta t}\right|\\
  &\leq \frac{h}{2} \left| \frac{u_b(t) -  u_b(t - \Delta t)}{\Delta t} \right|^2 +  \frac{h}{2}\left| \frac{\tilde{u}_{n }(t,1) - \tilde{u}_{n }(t - \Delta t,1)}{\Delta t} \right|^2\\
 &\quad \, + 8  \sigma^2 M_b^6  \left| \frac{u_b(t) -  u_b(t - \Delta t)}{\Delta t}\right|^2 + \frac{1}{2}\left| \frac{\tilde{u}_{n }(t,1) - \tilde{u}_{n }(t - \Delta t,1)}{\Delta t}\right|^2\\
&\hspace{80mm} \text{ for a.e. } t \in (0, \Delta t),
  \end{align*}
  where $M_b = \max_{0 \leq t \leq T}|u_b(t)|$.
   Thus, we get
      \begin{align*}
 I_1(t)&\leq  -\frac{3k_a}{8} \left| \frac{\tilde{u}_{nx }(t) - \tilde{u}_{nx }(t - \Delta t)}{\Delta t}\right|_{L^2(1-\delta,1)}^2 +C_3  \left| \frac{u_b(t) -  u_b(t - \Delta t)}{\Delta t}\right|^2\\
 &\quad \, + C_3(\left|\tilde{u}_{0n}\right|^2_{W^{1,3}(1-\delta,1)} + \left|(\zeta_{xx}u_{0n})_x\right|_{L^2(1-\delta,1)}^2 + \left|(\zeta_{x}u_{0n})_x\right|_{L^2(1-\delta,1)}^2)\\
  &\quad \, + C_3 \left| \frac{\tilde{u}_{n }(t) - \tilde{u}_{n }(t - \Delta t)}{\Delta t}\right|_{L^2(1-\delta,1)}^2 + C_3 \left| \frac{\tilde{u}_{n }(t,1) - \tilde{u}_{n }(t - \Delta t,1)}{\Delta t}\right|^2\\
  &\hspace{80mm} \text{ for a.e. } t \in (0, \Delta t),
 \end{align*} 
    where $C_3$ is a positive constant independent of $n$ and $\Delta t$.
 \par As a next step, we deal with $I_2$ and $I_3$. First, by Lemma \ref{lem3}, we have $u_n(t) = u_{0n}$ for $t \leq 0$. It is obvious that
 \begin{align*}
I_2(t) &\leq k_a \left|\zeta_{xx}\right|_{L^{\infty}(1-\delta,1)}  \int^1_{1-\delta} \left| \frac{u_{n }(t) -  u_{0 n }}{\Delta t} \right| \left| \frac{\tilde{u}_{n }(t) - \tilde{u}_{n }(t - \Delta t)}{\Delta t} \right|dx\\
 &\leq C_4 \int^1_{1-\delta}\left| \frac{u_{n }(t) - u_{n }(t - \Delta t)  }{\Delta t}\right|^2 dx +C_4 \int^1_{1-\delta} \left| \frac{\tilde{u}_{n }(t) - \tilde{u}_{n }(t - \Delta t)}{\Delta t}\right|^2 dx\\
&\hspace{80mm} \text{ for a.e. } t \in (0, \Delta t),
\end{align*}
where $C_4 =  \frac{k_a}{2} \left|\zeta_{xx}\right|_{L^{\infty}(1-\delta,1)} $. Furthermore, thanks to $\zeta_x(1) = \zeta_x(1 - \delta) = 0$ and Young's inequality, integration by parts implies that
\begin{align*}
I_3(t) &= 2 k_a \int^1_{1-\delta} \zeta_{xx}  \frac{ u_{n }(t) - u_{0 n }}{\Delta t} \frac{\tilde{u}_{n }(t) - \tilde{u}_{n }(t - \Delta t)}{\Delta t}dx\\
& \quad \, + 2 k_a \int^1_{1-\delta} \zeta_{x}  \frac{ u_{n }(t) - u_{0 n }}{\Delta t} \frac{\tilde{u}_{nx }(t) - \tilde{u}_{nx }(t - \Delta t)}{\Delta t}dx\\
&\leq  C_5 \int^1_{1-\delta}\left|  \frac{ u_{n }(t) - u_{n }(t - \Delta t)}{\Delta t}\right|^2 dx + C_5  \int^1_{1-\delta}\left|  \frac{\tilde{u}_{n }(t) - \tilde{u}_{n }(t - \Delta t)}{\Delta t} \right|^2dx\\
& \quad \,  + \frac{k_a}{4}\int^1_{1-\delta} \left|  \frac{\tilde{u}_{nx }(t) - \tilde{u}_{nx }(t - \Delta t)}{\Delta t} \right|^2 dx \text{ for a.e. } t \in (0, \Delta t),
 \end{align*}
where  $C_5$ is a positive constant independent of $n$ and $\Delta t$. Consequently, we have
\begin{align} 
&\frac{d}{dt} \frac{c_a}{2} \int^1_{1-\delta} \left| \frac{\tilde{u}_{n }(t) - \tilde{u}_{n }(t - \Delta t)}{\Delta t}\right|^2 dx +  \frac{k_a}{4}  \int^1_{1-\delta}\left|  \frac{\tilde{u}_{n x}(t) - \tilde{u}_{n x}(t - \Delta t)}{\Delta t}\right|^2 dx\notag\\
&\leq C_6 \left(  \left| \tilde{u}_{0nxxx}\right|_{L^2(1-\delta,1)}^2 + \left|(\zeta_{xx}u_{0n})_x\right|_{L^2(1-\delta,1)}^2 + \left|(\zeta_xu_{0nx})_x \right|_{L^2(1-\delta,1)}^2 \right) \notag\\
&\quad \, +C_6\left|\tilde{u}_{0nxx}(1)\right|^2 +C_6 \left| \frac{u_b(t) - u_b(t - \Delta t)}{\Delta t} \right|^2 +C_6 \int^1_{1-\delta}\left| \frac{u_{n }(t) - u_{n }(t - \Delta t)  }{\Delta t}\right|^2 dx\notag \\
& \quad \,  +C_6  \int^1_{1-\delta}\left|  \frac{\tilde{u}_{n }(t) - \tilde{u}_{n }(t - \Delta t)}{\Delta t} \right|^2dx \text{ for a.e. } t \in (0, \Delta t), \label{minus}
\end{align}
where  $C_6$ is a positive constant. 
\par Similarly to above, for $\Delta t < t < T$, we can get
\begin{align} 
&\frac{d}{dt} \frac{c_a}{2} \int^1_{1-\delta} \left| \frac{\tilde{u}_{n }(t) - \tilde{u}_{n }(t - \Delta t)}{\Delta t}\right|^2 dx +  \frac{k_a}{2}  \int^1_{1-\delta}\left|  \frac{\tilde{u}_{n x}(t) - \tilde{u}_{n x}(t - \Delta t)}{\Delta t}\right|^2 dx\notag\\
&\leq C_7 \left| \frac{u_b(t) - u_b(t - \Delta t)}{\Delta t} \right|^2
 +C_7 \int^1_{1-\delta}\left| \frac{u_{n }(t) - u_{n }(t - \Delta t)  }{\Delta t}\right|^2 dx\notag \\
& \quad \,  +C_7 \int^1_{1-\delta}\left|  \frac{\tilde{u}_{n }(t) - \tilde{u}_{n }(t - \Delta t)}{\Delta t} \right|^2dx \text{ for a.e. } t \in (\Delta t, T), \label{plus}
\end{align}
where $C_7$ is a positive constant. Indeed, we have
\begin{align*}
&\frac{d}{dt} \frac{c_a}{2} \int^1_{1-\delta} \left| \frac{\tilde{u}_{n }(t) - \tilde{u}_{n }(t - \Delta t)}{\Delta t}\right|^2 dx\\
&= \int^1_{1-\delta} k_a \frac{\tilde{u}_{n xx}(t) - \tilde{u}_{ n xx}(t - \Delta t)}{\Delta t} \frac{\tilde{u}_{n }(t) - \tilde{u}_{n }(t - \Delta t)}{\Delta t}dx\\
&\quad \, - \int^1_{1-\delta} k_a \frac{\zeta_{xx} u_{n }(t) - \zeta_{xx} u_{n }(t - \Delta t)}{\Delta t} \frac{\tilde{u}_{n }(t) - \tilde{u}_{n }(t - \Delta t)}{\Delta t}dx\\
&\quad \, -2\int^1_{1-\delta} k_a \frac{\zeta_x u_{n x}(t) -\zeta_x u_{ n x}(t - \Delta t)}{\Delta t} \frac{\tilde{u}_{n }(t) - \tilde{u}_{n }(t - \Delta t)}{\Delta t}dx\\ % \text{ for a.e. } t \in 
&=: I_4(t) + I_5(t) + I_6(t) \text{ for a.e. } t \in (\Delta t, T).
\end{align*}
By integration by parts and \eqref{third}, we see that
\begin{align*}
I_4(t)& = k_a \frac{\tilde{u}_{n x}(t,1) - \tilde{u}_{ n x}(t - \Delta t,1)}{\Delta t} \frac{\tilde{u}_{n }(t,1) - \tilde{u}_{n }(t - \Delta t,1)}{\Delta t}\\
& \quad \, - k_a \int^1_{1-\delta}  \left| \frac{\tilde{u}_{n x}(t) - \tilde{u}_{ n x}(t - \Delta t)}{\Delta t} \right|^2 dx\\
& =: I_{41}(t)  - k_a \int^1_{1-\delta}  \left| \frac{\tilde{u}_{n x}(t) - \tilde{u}_{ n x}(t - \Delta t)}{\Delta t} \right|^2 dx \text{ for a.e. } t \in (\Delta t, T).
\end{align*}
We also have
\begin{align*}
I_{41}(t) &= - \frac{h}{\Delta t}([ \tilde{u}_n(t,1) + \theta_c]^+  - [\tilde{u}_{n}(t - \Delta t, 1) + \theta_c]^+ )\frac{\tilde{u}_{n }(t,1) - \tilde{u}_{n }(t - \Delta t,1)}{\Delta t}\\
&\quad \,+ \frac{h}{\Delta t} (u_b(t) -  u_b(t - \Delta t))\frac{\tilde{u}_{n }(t,1) - \tilde{u}_{n }(t - \Delta t,1)}{\Delta t}\\
&\quad \, - \frac{\sigma}{\Delta t}(([ \tilde{u}_n(t,1) + \theta_c]^+)^4 - ([\tilde{u}_{n}(t - \Delta t,1 ) + \theta_c]^+)^4)\frac{\tilde{u}_{n }(t,1) - \tilde{u}_{n }(t - \Delta t,1)}{\Delta t}\\
&\quad \, + \frac{\sigma}{\Delta t} (u_b(t)^4 -  u_b(t - \Delta t)^4)\frac{\tilde{u}_{n }(t,1) - \tilde{u}_{n }(t - \Delta t,1)}{\Delta t}\\
&=: I_{41}^{(1)}(t) + I_{41}^{(2)}(t) + I_{41}^{(3)}(t) + I_{41}^{(4)}(t) \text{ for a.e. } t \in (\Delta t, T).
\end{align*}
Thanks to the Lipschitz continuity of the positive part, it follows that
\begin{align*}
 I_{41}^{(1)}(t) &\leq h \left| \frac{\tilde{u}_{n }(t,1) - \tilde{u}_{n }(t - \Delta t,1)}{\Delta t} \right|^2,
 \end{align*}
 and
\begin{samepage}
\begin{samepage}
\begin{align*} 
I_{41}^{(3)}(t) &\leq   \frac{\sigma}{\Delta t}(( \tilde{u}_n(t,1) + \theta_c)^4 - (\tilde{u}_{n}(t - \Delta t,1 ) + \theta_c)^4)\frac{\tilde{u}_{n }(t,1) - \tilde{u}_{n }(t - \Delta t,1)}{\Delta t}\\
& \leq 4 \sigma (C + \theta_c)^2 \left| \frac{\tilde{u}_{n }(t,1) - \tilde{u}_{n }(t - \Delta t,1)}{\Delta t} \right|^2 \text{ for a.e. } t \in (\Delta t, T),
 \end{align*}
\end{samepage}
\end{samepage}
 where $C$ is the positive constant defined by Lemma  \ref{lem3}. Also, by applying Young's inequality, we have
 \begin{align*}
 I_{41}^{(2)}(t) &\leq \frac{h}{2} \left| \frac{u_b(t) -  u_b(t - \Delta t)}{\Delta t} \right|^2 +  \frac{h}{2}\left| \frac{\tilde{u}_{n }(t,1) - \tilde{u}_{n }(t - \Delta t,1)}{\Delta t} \right|^2,
  \end{align*}
and
 \begin{align*}
 I_{41}^{(4)}(t) &\leq 4 \sigma M_b^3 \left| \frac{u_b(t) -  u_b(t - \Delta t)}{\Delta t}\right| \left| \frac{\tilde{u}_{n }(t,1) - \tilde{u}_{n }(t - \Delta t,1)}{\Delta t}\right|\\
 &\leq 8  \sigma^2 M_b^6  \left| \frac{u_b(t) -  u_b(t - \Delta t)}{\Delta t}\right|^2 + \frac{1}{2}\left| \frac{\tilde{u}_{n }(t,1) - \tilde{u}_{n }(t - \Delta t,1)}{\Delta t}\right|^2\\
&\hspace{80mm} \text{ for a.e. } t \in (\Delta t, T),
  \end{align*}
  where $M_b = \max_{0 \leq t \leq T}|u_b(t)|$. Hence, we obtain
   \begin{align*}
   I_4(t) &\leq C_8  \left| \frac{u_b(t) -  u_b(t - \Delta t)}{\Delta t}\right|^2 + C_8 \left| \frac{\tilde{u}_{n }(t,1) - \tilde{u}_{n }(t - \Delta t,1)}{\Delta t}\right|^2\\
   &\quad \,  - k_a \int^1_{1-\delta}  \left| \frac{\tilde{u}_{n x}(t) - \tilde{u}_{ n x}(t - \Delta t)}{\Delta t} \right|^2 dx \text{ for a.e. } t \in (\Delta t, T),
 \end{align*}   
 where $C_8$ is a positive constant. Similarly to $I_2$ and $I_3$, we have
  \begin{align*}
I_5(t) %&\leq k_a \left|\zeta_{xx}\right|_{L^{\infty}(1-\delta,1)}  \int^1_{1-\delta} \left| \frac{u_{n }(t) -  u_{ n }(t - \Delta t)}{\Delta t} \right| \left| \frac{\tilde{u}_{n }(t) - \tilde{u}_{n }(t - \Delta t)}{\Delta t} \right|dx\\
 &\leq C_9 \int^1_{1-\delta}\left| \frac{u_{n }(t) - u_{n }(t - \Delta t)  }{\Delta t}\right|^2 dx +C_9 \int^1_{1-\delta} \left| \frac{\tilde{u}_{n }(t) - \tilde{u}_{n }(t - \Delta t)}{\Delta t}\right|^2 dx\\
&\hspace{70mm} \text{ for a.e. } t \in  (\Delta t, T),
\end{align*}
and
\begin{align*}
I_6(t) %&= 2 k_a \int^1_{1-\delta} \zeta_{xx}  \frac{ u_{n }(t) - u_{0 n }}{\Delta t} \frac{\tilde{u}_{n }(t) - \tilde{u}_{n }(t - \Delta t)}{\Delta t}dx\\
&\leq  C_{10} \int^1_{1-\delta}\left|  \frac{ u_{n }(t) - u_{n }(t - \Delta t)}{\Delta t}\right|^2 dx + C_{10}  \int^1_{1-\delta}\left|  \frac{\tilde{u}_{n }(t) - \tilde{u}_{n }(t - \Delta t)}{\Delta t} \right|^2dx\\
& \quad \,  + \frac{k_a}{2}\int^1_{1-\delta} \left|  \frac{\tilde{u}_{nx }(t) - \tilde{u}_{nx }(t - \Delta t)}{\Delta t} \right|^2 dx \text{ for a.e. } t \in  (\Delta t, T),
 \end{align*}
where $C_9 $ and $C_{10}$ are positive constants. Accordingly, we have
 \begin{align*}
 &\frac{d}{dt} \frac{c_a}{2} \int^1_{1-\delta} \left| \frac{\tilde{u}_{n }(t) - \tilde{u}_{n }(t - \Delta t)}{\Delta t}\right|^2 dx  + \frac{k_a}{2}\int^1_{1-\delta} \left|  \frac{\tilde{u}_{nx }(t) - \tilde{u}_{nx }(t - \Delta t)}{\Delta t} \right|^2 dx \\
  &\leq C_{11}  \left| \frac{u_b(t) -  u_b(t - \Delta t)}{\Delta t}\right|^2 + C_{11} \left| \frac{\tilde{u}_{n }(t,1) - \tilde{u}_{n }(t - \Delta t,1)}{\Delta t}\right|^2\\
  &\quad \, + C_{11} \int^1_{1-\delta}\left|  \frac{ u_{n }(t) - u_{n }(t - \Delta t)}{\Delta t}\right|^2 dx + C_{11}  \int^1_{1-\delta}\left|  \frac{\tilde{u}_{n }(t) - \tilde{u}_{n }(t - \Delta t)}{\Delta t} \right|^2dx\\
  &\hspace{80mm} \text{ for a.e. } t \in  (\Delta t, T),
  \end{align*}
  where $C_{11}$ is a positive constant. Due to Lemma \ref{lemgag} and Young's inequality, \eqref{plus} holds.
\par From \eqref{minus} and \eqref{plus}, there exists a positive constant $C_{12}$ independent of $n$ and $\Delta t$ such that
\begin{align} 
&\frac{d}{dt} \frac{c_a}{2} \int^1_{1-\delta} \left| \frac{\tilde{u}_{n }(t) - \tilde{u}_{n }(t - \Delta t)}{\Delta t}\right|^2 dx +  \frac{k_a}{4}  \int^1_{1-\delta}\left|  \frac{\tilde{u}_{n x}(t) - \tilde{u}_{n x}(t - \Delta t)}{\Delta t}\right|^2 dx\notag\\
&\leq C_{12} \chi_{\Delta t}(t) \alpha_{0n}  +C_{12} \left| \frac{u_b(t) - u_b(t - \Delta t)}{\Delta t} \right|^2 +C_{12} \int^1_{1-\delta}\left| \frac{u_{n }(t) - u_{n }(t - \Delta t)  }{\Delta t}\right|^2 dx\notag\\
& \quad \,  +C_{12} \int^1_{1-\delta}\left|  \frac{\tilde{u}_{n }(t) - \tilde{u}_{n }(t - \Delta t)}{\Delta t} \right|^2dx \text{ for a.e. } t \in [0, T], \label{ine}
\end{align}
where
\begin{align*}
 \chi_{\Delta t}(t) =
\begin{cases}
&1 \text{ for } 0 < t < \Delta t,\\
&0 \text{ for }  \Delta t < t < T,
\end{cases}
\end{align*}
and
\begin{align*}
\alpha_{0n} =\left|\tilde{u}_{0nxx}(1)\right|^2 +  \left| \tilde{u}_{0nxxx}\right|_{L^2(1-\delta,1)}^2 + \left|(\zeta_{xx}u_{0n})_x\right|_{L^2(1-\delta,1)}^2 + \left|(\zeta_xu_{0nx})_x \right|_{L^2(1-\delta,1)}^2.
\end{align*}
By applying Gronwall's inequality to \eqref{ine}, we see that
\begin{align*} 
& \frac{c_a}{2} \int^1_{1-\delta} \left| \frac{\tilde{u}_{n }(t_1) - \tilde{u}_{n }(t_1 - \Delta t)}{\Delta t}\right|^2 dx +  \frac{k_a}{4} \int^{t_1}_0\!\!\!  \int^1_{1-\delta}\left|  \frac{\tilde{u}_{n x}(t) - \tilde{u}_{n x}(t - \Delta t)}{\Delta t}\right|^2 dxdt\\
&\leq \frac{c_a}{2} \int^1_{1-\delta} \left| \frac{\tilde{u}_{n }(0) - \tilde{u}_{n }( - \Delta t)}{\Delta t}\right|^2 dx  + C_{13} \alpha_{0n} \int^{t_1}_0 \chi_{\Delta t}(t) dt\\
& \quad \,   +C_{13} \int^{t_1}_0 \left| \frac{u_b(t) - u_b(t - \Delta t)}{\Delta t} \right|^2 dt\\
&\quad \,  +C_{13} \int^{t_1}_0\!\!\!\int^1_{1-\delta}\left| \frac{u_{n }(t) - u_{n }(t - \Delta t)  }{\Delta t}\right|^2 dxdt \text{ for } t_1 \in [0, T],
\end{align*}
where $C_{13}$ is a positive constant independent of $n$ and $\Delta t$. \\Because of $\tilde{u}_{n }(0) - \tilde{u}_{n }( - \Delta t) = \Delta t \frac{k_a}{c_a} \zeta u_{0nxx},$ we have
\begin{align} 
&  \frac{c_a}{2} \int^1_{1-\delta} \left| \frac{\tilde{u}_{n }(t_1) - \tilde{u}_{n }(t_1 - \Delta t)}{\Delta t}\right|^2 dx + \frac{k_a}{4} \int^{T}_0\!\!\!  \int^1_{1-\delta}\left|  \frac{\tilde{u}_{n x}(t) - \tilde{u}_{n x}(t - \Delta t)}{\Delta t}\right|^2 dxdt\notag\\
&\leq \frac{k_a^2}{2c_a} \int^1_{1-\delta} \left| \zeta u_{0nxx}\right|^2 dx  + C_{13} \alpha_{0n} \Delta t  +C_{13} \int^{T}_0 \left| \frac{u_b(t) - u_b(t - \Delta t)}{\Delta t} \right|^2 dt\notag\\
&\quad \,  +C_{13} \int^{T}_0\!\!\!\int^1_{1-\delta}\left| \frac{u_{n }(t) - u_{n }(t - \Delta t)  }{\Delta t}\right|^2 dxdt \text{ for any } n \text{ and } \Delta t.\label{toyama}
\end{align}
This shows that $\{ \frac{\tilde{u}_{n} - \tilde{u}_{n}(\cdot - \Delta t)}{\Delta t} : \Delta t > 0 \}$ and  $\{ \frac{\tilde{u}_{n x} - \tilde{u}_{n x}(\cdot - \Delta t)}{\Delta t} : \Delta t > 0 \}$ are bounded in $L^{\infty}(0,T;$ $L^2(1-\delta,1))$ and $L^2(0,T; L^2(1-\delta,1))$, respectively.  It is clear that \[\frac{\tilde{u}_{n } - \tilde{u}_{n }(\cdot - \Delta t)}{\Delta t} \to \tilde{u}_{n t} 
\text{ weakly$\ast$ in } L^{\infty}(0,T; L^2(1-\delta,1))\] and \[\frac{\tilde{u}_{n x} - \tilde{u}_{nx }(\cdot - \Delta t)}{\Delta t} \to \tilde{u}_{n xt} 
\text{ weakly in } L^2(0,T; L^2(1-\delta,1)) \text{ as } \Delta t \to 0.\] Accordingly, from \eqref{toyama} it follows that
\begin{align*} 
& \frac{c_a}{2}  \int^1_{1-\delta}\left|  \tilde{u}_{n t}(t)\right|^2 dx +  \frac{k_a}{4} \int^{t}_0\!\!\!  \int^1_{1-\delta}\left|  \tilde{u}_{n x\tau}\right|^2 dxd\tau\\
&\leq \frac{k_a^2}{2c_a} \int^1_{1-\delta} \left| \zeta u_{0nxx}\right|^2 dx  +C_{13} \int^{t}_0 \left| u'_b\right|^2 d\tau  +C_{13} \int^{t}_0\!\!\!\int^1_{1-\delta}\left| u_{n \tau}\right|^2 dxd\tau\\
&\hspace{80mm} \text{ for }t \in [0,T] \text{ and any } n.
\end{align*}
By using Lemma \ref{lem1}, we observe that $\{\tilde{u}_{nxt}\}$ is bounded in  $L^2(0,T; L^2(1-\delta,1))$. Hence, due to Lemma \ref{lem2} it is obvious  that $\tilde{u}_{nxt} \in L^2(0,T; L^2(1-\delta,1))$ and it satisfies
\begin{align*} 
& \frac{c_a}{2}  \int^1_{1-\delta}\left|  \tilde{u}_{ t}(t)\right|^2 dx +  \frac{k_a}{4} \int^{t}_0\!\!\!  \int^1_{1-\delta}\left|  \tilde{u}_{x\tau}\right|^2 dxd\tau\\
& \leq \frac{k_a^2}{2c_a} \int^1_{1-\delta} \left| \zeta u_{0xx}\right|^2 dx  +C_{13} \int^{t}_0 \left| u'_b\right|^2 d\tau  +C_{13} \int^{t}_0\!\!\!\int^1_{1-\delta}\left| u_{\tau}\right|^2 dxd\tau \text{ for } 0 \leq t \leq T.
\end{align*}
Thus, Lemma \ref{lem4} holds.
\end{proof}
\begin{proof}[Proof of Proposition \ref{prop1}]
By Lemma \ref{lem4}, there exists a positive constant $R_b$ such that 
\begin{align*} 
  \int^1_{1-\delta}\left|  \tilde{u}_{ t}(t)\right|^2 dx + \int^{t}_0\!\!\!  \int^1_{1-\delta}\left|  \tilde{u}_{x\tau}\right|^2 dxd\tau \leq R_b \text{ for } 0 \leq t \leq T.
\end{align*}
Hence, we have
\begin{align*} 
&\int^T_0 \left|\tilde{u}_t(\cdot,1)\right|^2 dt\\
&= \int^T_0 \!\!\! \int^1_{1-\delta} \frac{\partial}{\partial x}\left(\frac{1}{\delta}(x-(1-\delta))\tilde{u}_t(\cdot,x)^2\right) dxdt\\
&\leq \left( \frac{1}{\delta} + 1 \right)\int^T_0 \!\!\! \int^1_{1-\delta} \tilde{u}_t(\cdot,x)^2 dx dt + \int^T_0 \!\!\! \int^1_{1-\delta} \tilde{u}_{tx}(\cdot,x)^2 dx dt\\
&\leq \left( \frac{1}{\delta} + 1 \right) (R_b T + R_b).
\end{align*}
This implies that $\tilde{u}(\cdot, 1) \in W^{1,2}(0,T)$, namely, $u(\cdot, 1) \in W^{1,2}(0,T)$. Moreover, it holds
\begin{align*}
\left|u_t(t)\right|_{L^2(1-\delta /2,1)} &\leq \Big| \frac{1}{\zeta} \tilde{u}_t(t)\Big|_{L^2(1-\delta /2,1)}\\
&\leq \frac{1}{d_0} \left| \tilde{u}_t(t)\right|_{L^2(1-\delta /2,1)} \text{ for } 0 \leq t \leq T,
\end{align*}
where $d_0 = \inf_{x \in [1-\delta/2, 1]} \zeta(x)$. Thus, we have proved Proposition \ref{prop1}.
\end{proof}
\section{Estimates for water contents}\label{esw}
We give some uniform estimates for the water content in order to prove Theorem \ref{th1}. The proofs of Lemmas \ref{lem-w-1} and \ref{lem-w-2} are similar to Lemmas \ref{lemma100} and \ref{lemma101}, respectively. Thus, we omit the proofs.
\begin{lemma}[{cf. Proposition 3.3 in \cite{AK}}] \label{lem-w-3}
Let $T>0$, and assume that
\begin{align*}
&e \in W^{1,2}(0,T), \ \quad 0<e<1 \ \text{ on } [0,T],\\
&u_0 \in W^{1,2}(0,1), \quad u_0 \geq 0 \ \text{ on } [e(0),1], \quad u_0 \leq 0 \ \text{ on } [0,e(0)],\\
&u_b \in W^{1,2}(0,T),  \quad u_b \geq \theta_c \text{ on } [0,T],\\
&w_0 \in W^{1,2}(0,1).
\end{align*}
 Then AP($e$) has a unique solution $(u,w)$
 on $[0,T]$, where $u$ and $w$ satisfy \{\Sref{2}, \Sref{3}\} and \Sref{4}, respectively.
\end{lemma}
\begin{proof}
Since $u(\cdot,1) \in W^{1,2}(0,T)$ as proved in Proposition \ref{prop1}, by applying Lemma \ref{prop3-2}, we can show this lemma.
\end{proof}
\begin{lemma}[{cf. Lemmas 4.3 and 4.6 in \cite{AK}}] \label{lem-w-1}
Suppose the same assumption as in Lemma~\ref{lemma100} and Lemma~\ref{lem-w-3}. Let $(u, w)$ be a solution of $\text{AP}(e)$ on $[0,T]$ for $e \in K(\delta, e_0, M,T_0)$. Then, there exists a positive constant \[C_1 = C_1\left(\delta, M, T, \left|u_b\right|_{W^{1,2}(0,T)}, \left|u_{0xx}\right|_{L^{2}(1-\delta,1)}, \left|u_0\right|_{W^{1,2}(0,1)}, \left|w_0\right|_{W^{1,2}(0,1)}\right)\] such that
\begin{align*}
 \int^1_0 w(t,x)^2dx +\int^{t}_{0} \!\!\! \int^{1}_{0} \left|w_x(\tau,x)\right|^2 dx d\tau \leq C_1 \text{ for } 0 \leq t \leq T,
\end{align*}
and
\begin{align}
 \int^1_0 \left|w_x(t,x)\right|^2dx +\int^{t}_{0} \!\!\! \int^{1}_{0} \left|w_{\tau}(\tau,x)\right|^2 dx d\tau \leq C_1 \text{ for } 0 \leq t \leq T.\label{cloud}
\end{align}
\end{lemma}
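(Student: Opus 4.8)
The plan is to mimic the proof of Lemma~\ref{lemma100}: flatten the free boundary, perform two energy estimates on the fixed domain $(0,2)$, and close them with Gronwall's inequality. The only genuinely new feature relative to the heat part is the boundary flux at $x=1$, which now carries $p(u(\cdot,1))$; since $p$ is bounded by (A1) and $u(\cdot,1)\in W^{1,2}(0,T)$ by Proposition~\ref{prop1}, this term is under control. First I would set
\[
\overline{w}(\cdot,y)=
\begin{cases}
w(\cdot,ye) & \text{for } 0\le y\le 1,\\
w(\cdot,e+(y-1)(1-e)) & \text{for } 1\le y\le 2,
\end{cases}
\]
exactly as for $\overline{u}$, so that \eqref{EQa} transforms, in analogy with \eqref{sharp}, into
\[
\overline{w}_t=\frac{d_l}{e^2}\overline{w}_{yy}+\frac{ye'}{e}\overline{w}_y \ \text{on }(0,1),\qquad
\overline{w}_t=\frac{d_a}{(1-e)^2}\overline{w}_{yy}+\frac{(2-y)e'}{1-e}\overline{w}_y \ \text{on }(1,2),
\]
with $\overline{w}_y(\cdot,0)=0$, the transmission condition \eqref{FBP2} at $y=1$, and $-\frac{d_a}{1-e}\overline{w}_y(\cdot,2)=b_1p(u(\cdot,1))-b_2p(u_b)$ at $y=2$. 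As in Lemma~\ref{lemma100}, the two subintervals are treated separately and the interface terms at $y=1$ are combined through the transmission condition, producing the weighted gradient norms with coefficients $\frac{d_l}{e}$ and $\frac{d_a}{1-e}$.

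For the first estimate I would multiply by $\overline{w}$ and integrate over $(0,2)$. The convection terms are absorbed, after Young's inequality and $e\ge\delta$, into $\varepsilon\int|\overline{w}_y|^2+C|e'|^2\int|\overline{w}|^2$; the boundary term $\overline{w}(\cdot,2)\bigl(b_1p(u(\cdot,1))-b_2p(u_b)\bigr)$ is estimated by Lemma~\ref{lemgag} together with $0\le p\le M_p$ from (A1). Since $e\in K(\delta,e_0,M,T)$ gives $e'\in L^3(0,T)\subset L^2(0,T)$ with $\int_0^T|e'|^3\,d\tau\le M$, Gronwall's inequality with the integrable coefficient $|e'|^2$ yields the first bound.

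The hard part is the second estimate, obtained by multiplying by $\overline{w}_t$: the troublesome boundary contribution is $\overline{w}_t(\cdot,2)\bigl(b_1p(u(\cdot,1))-b_2p(u_b)\bigr)$, which I would rewrite as
\[
\frac{d}{dt}\Bigl[\overline{w}(\cdot,2)\bigl(b_1p(u(\cdot,1))-b_2p(u_b)\bigr)\Bigr]
-\overline{w}(\cdot,2)\bigl(b_1p'(u(\cdot,1))u_t(\cdot,1)-b_2p'(u_b)u_b'\bigr).
\]
The decisive point is that $b_1p'(u(\cdot,1))u_t(\cdot,1)-b_2p'(u_b)u_b'\in L^2(0,T)$: here $p'$ is bounded by (A1), $u_b'\in L^2(0,T)$ by (A4), and $u_t(\cdot,1)\in L^2(0,T)$ precisely because $u(\cdot,1)\in W^{1,2}(0,T)$ by Proposition~\ref{prop1}. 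This is exactly why $C_1$ must depend on $|u_b|_{W^{1,2}(0,T)}$, $|u_{0xx}|_{L^2(1-\delta,1)}$ and $|u_0|_{W^{1,2}(0,1)}$, the quantities controlling $u(\cdot,1)$ in $W^{1,2}(0,T)$. The remaining convection term $\int\frac{ye'}{e}\overline{w}_y\overline{w}_t$ is absorbed into $\varepsilon\int|\overline{w}_t|^2+C|e'|^2\int|\overline{w}_y|^2$, and a final Gronwall argument (again with the $L^1(0,T)$ coefficient $|e'|^2$, using the first estimate and $w_0\in W^{1,2}(0,1)$ for the data) gives \eqref{cloud}. Undoing the rescaling via $\overline{w}_y=e\,w_x$ on $(0,1)$ and $\overline{w}_y=(1-e)w_x$ on $(1,2)$ converts both bounds back to the stated estimates for $w$.
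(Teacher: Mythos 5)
Your proposal is correct and follows essentially the route the paper intends: the paper omits this proof, saying only that it is similar to Lemma \ref{lemma100} (cf.\ Lemmas 4.3 and 4.6 in \cite{AK}) and noting in Remark \ref{rem-w-1} that the regularity $u(\cdot,1)\in W^{1,2}(0,T)$ from Proposition \ref{prop1} is what makes \eqref{cloud} provable. You reconstruct exactly that argument --- flattening the free boundary, two energy estimates closed by Gronwall with the $L^1$-coefficient $|e'|^2$, and the identification of $u_t(\cdot,1)\in L^2(0,T)$ as the decisive new ingredient in the $\overline{w}_t$-estimate.
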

\begin{remark}\label{rem-w-1}
 Because of $u(\cdot,1) \in W^{1,2}(0,T)$ by Proposition \ref{prop1}, we can prove \eqref{cloud}.
\end{remark}
\begin{lemma}[cf. Lemma 5.2 in \cite{AK}]\label{lem-w-2}
  Let \[\delta > 0, M>0, T > 0, e_{0} \in [\delta, 1-\delta]\] and $e_i \in K(\delta, e_{0}, M,T)$ for $i=1,2$ and assume \Aref{2} and \Aref{3}. If $(u_i, w_i)$ is a solution of AP($e_i$) for $i =1,2$, then, for some positive constant \[C_2 = C_2\left(\delta, M,T, \left|u_b\right|_{W^{1,2}(0,T)}, \left|u_{0xx}\right|_{L^{2}(1-\delta,1)}, \left|u_0\right|_{W^{1,2}(0,1)}, \left|w_0\right|_{W^{1,2}(0,1)}\right)\] it holds that
 \begin{align*}
&\int^1_0 \left|w _{1}(t,x) - w_{2}(t,x)\right|^2 dx +  \int^{t}_{0} \!\!\! \int^{1}_{0}  \left|w _{1x}(\tau,x) - w_{2x}(\tau,x)\right|^2dx d\tau \\
&  \leq C_{2} \int^{t}_0 \left(\left|e_{1}(\tau) - e_{2}(\tau)\right|^2 + \left|e'_{1}(\tau) - e'_{2}(\tau)\right|^2 \right) d\tau \text{ for } 0 \leq t \leq T.
\end{align*}
\end{lemma}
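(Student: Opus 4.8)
The plan is to mirror the proof of Lemma~\ref{lemma101}: flatten the free boundary to the fixed interface $y=1$ and then run an energy estimate on the difference of the transformed water contents. For $i=1,2$ I would set
\begin{align*}
\overline{w}_i(\cdot,y) =
\begin{cases}
w_i(\cdot,y e_i) & \text{if } 0 \leq y \leq 1,\\
w_i(\cdot,e_i+(y-1)(1-e_i)) & \text{if } 1 \leq y \leq 2,
\end{cases}
\end{align*}
so that, exactly as in \eqref{sharp}, each $\overline{w}_i$ satisfies a diffusion equation on $(0,1)$ and on $(1,2)$ with coefficients $d_l/e_i^2$ and $d_a/(1-e_i)^2$, together with lower order terms carrying the factors $e_i'/e_i$ and $e_i'/(1-e_i)$. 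Since $w$ is continuous across $y=1$ and the flux $d\,w_x$ is continuous there by \eqref{FBP2}, I would multiply the difference of the two transformed equations by $\overline{w}_1-\overline{w}_2$, integrate by parts on $(0,1)$ and $(1,2)$ separately, and note that the interface contributions at $y=1$ combine and are controlled as in Lemma~5.2 of \cite{AK}. The interior terms are then treated exactly as in the passage leading to \eqref{bay}: the leading terms produce the good diffusion energy, and every remaining term is a product of one of $|e_1-e_2|$, $|e_1'-e_2'|$ with a quantity bounded in the appropriate norm by the uniform estimates of Lemma~\ref{lem-w-1}. Young's inequality and absorption of the gradient factors reduce the interior part to $C\int_0^t(|e_1-e_2|^2+|e_1'-e_2'|^2)\,d\tau$ plus a Gronwall term $C\int_0^t|\overline{w}_1-\overline{w}_2|_{L^2}^2\,d\tau$.

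The main obstacle, and the only genuinely new feature compared with Lemma~\ref{lemma101}, is the boundary term at $y=2$ (i.e.\ $x=1$), because the Neumann condition \eqref{BCaw} contains $u(\cdot,1)$. From \eqref{BCaw} one has $\overline{w}_{iy}(\cdot,2)=-\tfrac{1-e_i}{d_a}\big(b_1p(u_i(\cdot,1))-b_2p(u_b)\big)$, so $\overline{w}_{1y}(\cdot,2)-\overline{w}_{2y}(\cdot,2)$ splits into a part proportional to $e_1-e_2$ (times the bounded quantity $b_1p(u_2(\cdot,1))-b_2p(u_b)$, using $0\le p\le M_p$) and a part proportional to $p(u_1(\cdot,1))-p(u_2(\cdot,1))$. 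Hence the $y=2$ contribution is bounded by
\begin{align*}
C\big(|e_1-e_2| + M_p\,|u_1(\cdot,1)-u_2(\cdot,1)|\big)\,\big|\overline{w}_1(\cdot,2)-\overline{w}_2(\cdot,2)\big|,
\end{align*}
where I used the Lipschitz continuity of $p$ from (A1). Estimating the trace $|\overline{w}_1(\cdot,2)-\overline{w}_2(\cdot,2)|$ by the first inequality of Lemma~\ref{lemgag} and applying Young's inequality throws the factor $|\overline{w}_{1y}-\overline{w}_{2y}|_{L^2(1,2)}$ into the diffusion energy, leaving $|e_1-e_2|^2$, $|\overline{w}_1-\overline{w}_2|_{L^2}^2$, and the crucial quantity $\int_0^t|u_1(\cdot,1)-u_2(\cdot,1)|^2\,d\tau$.

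To close the argument I must bound this last integral by $\int_0^t(|e_1-e_2|^2+|e_1'-e_2'|^2)\,d\tau$, and here I again invoke Lemma~\ref{lemgag}. Its first inequality applied to $u_1-u_2$ gives $|u_1(\tau,1)-u_2(\tau,1)|^2\le 2C_0^2\big(|u_{1x}(\tau)-u_{2x}(\tau)|_{L^2}|u_1(\tau)-u_2(\tau)|_{L^2}+|u_1(\tau)-u_2(\tau)|_{L^2}^2\big)$; integrating in $\tau$ and using the Cauchy--Schwarz inequality together with the two halves of \eqref{gen} (the pointwise-in-time $L^2$ bound on $u_1-u_2$ and the time-integrated $L^2$ bound on $u_{1x}-u_{2x}$) yields $\int_0^t|u_1(\cdot,1)-u_2(\cdot,1)|^2\,d\tau\le C\int_0^t(|e_1-e_2|^2+|e_1'-e_2'|^2)\,d\tau$. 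Substituting this back and applying Gronwall's inequality gives the asserted bound for $\overline{w}_1-\overline{w}_2$ in $L^\infty(0,t;L^2)\cap L^2(0,t;W^{1,2})$, and undoing the change of variables (as after \eqref{bay} for $u$) transfers it to $w_1-w_2$, completing the proof. I expect the step bounding $\int_0^t|u_1(\cdot,1)-u_2(\cdot,1)|^2\,d\tau$ to be the delicate one, since it is precisely the coupling of the water content to the temperature trace that prevents a direct imitation of the classical two-phase Stefan estimate.
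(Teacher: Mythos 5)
Your proposal is correct and follows essentially the same route the paper intends: the paper omits the proof, stating only that it is ``similar to Lemma~\ref{lemma101}'' (cf.\ Lemma~5.2 of \cite{AK}), i.e.\ flatten the free boundary, run the energy estimate on the difference, and use the transmission condition at the interface. You have, in addition, correctly identified and resolved the one genuinely new ingredient --- the boundary term at $x=1$ carrying $p(u_i(\cdot,1))$ --- by reducing it via Lemma~\ref{lemgag} and \eqref{gen} to $\int_0^t(|e_1-e_2|^2+|e_1'-e_2'|^2)\,d\tau$, which is exactly why the constant $C_2$ must depend on the quantities controlling $u$ listed in the statement.
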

\section{Proof of Theorem \ref{th1}}\label{proofth1}
In this section, first, by applying Banach's fixed point theorem to a solution operator of AP($e$), we prove the existence of a solution of P locally in time. As mentioned in Section \ref{sec10}, for proving \eqref{maru}, we need $e \in W^{1,3}(0,T)$. Let $K(\delta, e_0, M,T_0)$ be a subset of $ W^{1,3}(0,T)$ given in Section \ref{sec10}, for $0 < \delta < e_0 < 1-\delta < 1$, $M > 0$ and $T > 0$.
Moreover, for $T_0 \in (0, T]$, we define a solution operator $\Gamma : K(\delta, e_0, M,T_0) \to W^{1,3}(0,T_0)$ as follows:
\begin{align}
&\Gamma(e)(t) = e_0 + \int_0^t \frac{1}{l w(\tau, e(\tau)) } \left(   k_l u_x(\tau, e(\tau)-) - k_a u_x(\tau, e(\tau)+) \right) d\tau\label{gamma}\\
&\hspace{70mm} \text{ for } 0 \leq t \leq T \text{ and } e \in K(\delta, e_0, M, T_0),\notag
\end{align}
where $(u,w)$ is a solution of AP($e$) on $[0,T]$. Clearly, the set $K(\delta, e_0, M,T_0)$ is closed in $W^{1,3}(0,T_0)$, and $\Gamma(e) \in W^{1,3}(0,T_0)$ for any $e \in K(\delta, e_0, M, T_0)$. 
\begin{lemma}\label{lem3-18}
 Let $0 < \delta < 1$ and $M > 0$. Suppose the same assumption as in the later part of Lemma \ref{lemma100} and $w_0(e_0) \geq 2\delta_1$ for some $\delta_1 > 0$, then there exists $T_1 > 0$ such that ${w}(t,e(t)) \geq \delta_1$ for any $t \in [0, T_1]$.
\end{lemma}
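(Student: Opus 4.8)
The plan is to show that the scalar function $t\mapsto w(t,e(t))$ is continuous at $t=0$ with a modulus of continuity that depends only on the data appearing in Lemma~\ref{lem-w-1}, and then to use the hypothesis $w_0(e_0)\ge 2\delta_1$. Since every $e\in K(\delta,e_0,M,T_0)$ satisfies $e(0)=e_0$, and since the initial condition \eqref{honda} together with $w\in C([0,T];L^2(0,1))$ gives $w(0,e_0)=w_0(e_0)$, I would split, for $t\in[0,T_0]$,
\[
w(t,e(t)) - w_0(e_0) = \big(w(t,e(t)) - w(t,e_0)\big) + \big(w(t,e_0) - w(0,e_0)\big) =: A(t) + B(t),
\]
and bound the spatial increment $A(t)$ and the temporal increment $B(t)$ at the fixed interior point $e_0$ separately. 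Throughout I would invoke the uniform bounds of Lemma~\ref{lem-w-1}: there is a constant $C_1>0$, depending only on $\delta,M,T$ and the norms of the data, such that $|w_x(t)|_{L^2(0,1)}^2\le C_1$ for all $t$ and $\int_0^T\!\!\int_0^1|w_\tau|^2\,dx\,d\tau\le C_1$. For the spatial term, Cauchy--Schwarz and the bound on $w_x$ give
\[
|A(t)| = \Big|\int_{e_0}^{e(t)} w_x(t,x)\,dx\Big| \le |w_x(t)|_{L^2(0,1)}\,|e(t)-e_0|^{1/2} \le C_1^{1/2}\,|e(t)-e_0|^{1/2},
\]
and since $e\in K(\delta,e_0,M,T_0)$ Hölder's inequality yields $|e(t)-e_0|\le\big(\int_0^t|e'|^3\,d\tau\big)^{1/3}t^{2/3}\le M^{1/3}t^{2/3}$, so that $|A(t)|\le C_1^{1/2}M^{1/6}t^{1/3}$, uniformly over $K(\delta,e_0,M,T_0)$.

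The principal obstacle is the temporal increment $B(t)$, because $w_t$ is controlled only in $L^2(Q(T))$ and not pointwise in $x$, so the fundamental theorem of calculus in $t$ cannot be applied directly at the single point $e_0$. To convert the $L^2$ control of $w_\tau$ into a pointwise-in-$x$ time modulus I would average over a short window: for $0<\eta\le\delta$ (so that $[e_0,e_0+\eta]\subset(0,1)$, using $e_0\le 1-\delta$), I would write
\begin{align*}
w(t,e_0)-w(0,e_0) &= \frac{1}{\eta}\int_{e_0}^{e_0+\eta}\big(w(t,x)-w(0,x)\big)\,dx\\
&\quad + \frac{1}{\eta}\int_{e_0}^{e_0+\eta}\big[(w(t,e_0)-w(t,x))-(w(0,e_0)-w(0,x))\big]\,dx.
\end{align*}
The first average equals $\frac{1}{\eta}\int_{e_0}^{e_0+\eta}\int_0^t w_\tau\,d\tau\,dx$, which by Cauchy--Schwarz in both variables is at most $\eta^{-1/2}t^{1/2}C_1^{1/2}$; the bracket in the second integral is bounded, again by Cauchy--Schwarz against $w_x\in L^\infty(0,T;L^2(0,1))$, by $2C_1^{1/2}\eta^{1/2}$. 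Hence $|B(t)|\le C_1^{1/2}\big(\eta^{-1/2}t^{1/2}+2\eta^{1/2}\big)$, and the choice $\eta=t^{1/2}$ gives $|B(t)|\le 3C_1^{1/2}t^{1/4}$, again uniformly over $K(\delta,e_0,M,T_0)$.

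Combining the two estimates,
\[
|w(t,e(t))-w_0(e_0)| \le C_1^{1/2}M^{1/6}t^{1/3} + 3C_1^{1/2}t^{1/4}\qquad\text{for all } e\in K(\delta,e_0,M,T_0).
\]
The right-hand side is independent of the particular curve $e$ and tends to $0$ as $t\downarrow 0$, so I would choose $T_1\in(0,T_0]$ small enough that it is $\le\delta_1$ on $[0,T_1]$. Then $w(t,e(t))\ge w_0(e_0)-\delta_1\ge 2\delta_1-\delta_1=\delta_1$ for $t\in[0,T_1]$, which is the claim. I would emphasize that the decisive feature is that $T_1$ depends only on $\delta_1$ and the fixed data, not on $e$; this uniformity is exactly what is needed later to keep the denominator $w(\tau,e(\tau))$ in the solution operator $\Gamma$ of \eqref{gamma} bounded below, so that $\Gamma$ maps $K(\delta,e_0,M,T_0)$ into itself.
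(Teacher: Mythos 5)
Your proof is correct, and it shares the paper's overall strategy --- show that $w(t,e(t)) \to w_0(e_0)$ as $t \downarrow 0$ with a modulus that is uniform over $e \in K(\delta,e_0,M,T_0)$, then invoke $w_0(e_0)\geq 2\delta_1$ --- but the decomposition and the key technical device are genuinely different. The paper splits $w(t,e(t))-w_0(e_0)$ into the temporal increment at the \emph{moving} point, $w(t,e(t))-w_0(e(t))$, plus the spatial increment of the \emph{initial datum}, $w_0(e(t))-w_0(e_0)$; the first piece is handled by the trace identity $|f(e(t))|^2 = \frac{1}{e(t)}\int_0^{e(t)}\partial_x\bigl(x f(x)^2\bigr)\,dx$ applied to $f=w(t)-w_0$, which reduces the pointwise evaluation to $|w(t)-w_0|_{L^2}$ and $|w_x(t)-w_{0x}|_{L^2}$ and gains smallness from $|w(t)-w_0|_{L^2}\leq t^{1/2}\bigl(\int_0^t|w_\tau|_{L^2}^2\,d\tau\bigr)^{1/2}$. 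You instead pass through the fixed point $e_0$ and control the temporal increment there by averaging over a spatial window of width $\eta$ optimized at $\eta=t^{1/2}$; this is a sound substitute and produces the same rates, $O(t^{1/4})$ for the temporal part and $O(t^{1/3})$ for the spatial part, from the same energy estimate (Lemma \ref{lem-w-1}), with the same uniformity in $e$ that the fixed-point argument for $\Gamma$ needs. The paper's trace trick is slightly more economical: it avoids the auxiliary window and the constraint $\eta\leq\delta$, which in your argument silently forces $T_1\leq\delta^2$ --- a harmless restriction, but one you should state explicitly when you choose $T_1$.
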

\begin{proof}
First, it is easy to see that
\begin{align*}
w(t,e(t)) \geq w_0(e_0) - | w(t,e(t)) - w_0(e(t))| - |w_0(e(t)) - w_0(e_0)| \text{ for } 0 \leq t \leq T.
\end{align*}
Here, we note that 
\begin{align*}
& \left| w(t,e(t)) - w_0(e(t))\right|^2\\
& =\frac{1}{e(t)} \int^{e(t)}_0 \frac{\partial}{\partial x} (x(w(t,x) - w_0(x))^2) dx\\
&\leq \frac{1}{\delta} \left( \int^1_0 \left|w(t) - w_0\right|^2 dx + 2 \int^1_0 |w_x(t) - w_{0x}| |w(t) - w_0| dx \right)\\
&\leq \frac{1}{\delta} \left( t \int^t_0 \left|w_{\tau}\right|_{L^2(0,1)}^2 d\tau + 2  t^{1/2} \left(\int^t_0 \left|w_{\tau}\right|_{L^2(0,1)}^2 d\tau\right)^{1/2}  (\left|w_x(t)\right|_{L^2(0,1)} + \left|w_{0x}\right|_{L^2(0,1)}) \right)\\
&\hspace{100mm}\text{for } 0 \leq t \leq T.
\end{align*}
Hence, thanks to the assumption, Lemmas \ref{lem-w-1} and \ref{lem-w-2} imply that
\begin{align*}
w(t,e(t))& \geq 2 \delta_1 - R_1 t^{1/2} - \left|e(t) - e_0\right|^{1/2}\left|w_{0x}\right|_{L^2(0,1)}\\
&\geq 2 \delta_1 - R_1 t^{1/2} - \left|e'\right|_{L^3(0,t)}^{1/2} t^{1/3} \left|w_{0x}\right|_{L^2(0,1)} \text{ for } 0 \leq t \leq T,
\end{align*}
where $R_1$ is a positive constant. Thus, we can prove this lemma.
\end{proof}
\begin{lemma}\label{lem3-19}
 Let $M > 0$. Suppose the same assumption as in Lemma \ref{lem3-18}, then there exist $\delta > 0$ and $T_2 \in (0, T_1]$ such that $\delta \leq \Gamma (e)(t) \leq 1-\delta \text{ for } 0 \leq t \leq T_2$ and $e \in K(\delta,e_0,M,T)$.
\end{lemma}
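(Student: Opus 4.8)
The plan is to show that $\Gamma(e)$ cannot move far from its initial value $e_0$ over a short time interval, using the uniform bounds already available for $u_x$ at the free boundary (estimate \eqref{maru}) and for $w$ along the free boundary (Lemma \ref{lem3-18}). Since $\Gamma(e)(0)=e_0$ and $0<e_0<1$, I would first fix $\delta>0$ small enough that $e_0$ lies strictly inside $(\delta,1-\delta)$; concretely, any $\delta \leq \tfrac12\min\{e_0,\,1-e_0\}$ guarantees that the margin $m:=\min\{e_0-\delta,\,1-\delta-e_0\}$ is strictly positive. With this $\delta$ fixed, all constants below depend only on $\delta,M,T$ and the data, and in particular are uniform over $e\in K(\delta,e_0,M,T)$.

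Next I would collect the two a priori bounds that control the integrand defining $\Gamma$. By Lemma \ref{lem3-18}, applied with the chosen $\delta$, there is a time $T_1>0$, independent of $e\in K(\delta,e_0,M,T)$, such that $w(\tau,e(\tau))\geq \delta_1$ for $\tau\in[0,T_1]$; the uniformity follows because the lower bound obtained there depends on $e$ only through $|e'|_{L^3(0,t)}^{1/2}\leq M^{1/6}$, so the right-hand side $2\delta_1 - R_1 t^{1/2} - M^{1/6}t^{1/3}|w_{0x}|_{L^2(0,1)}$ tends to $2\delta_1$ as $t\downarrow 0$ uniformly in $e$. Simultaneously, estimate \eqref{maru} of Lemma \ref{lemma100} yields $|u_x(\tau,e(\tau)\pm)|\leq C_2$ on $[0,T]$, with $C_2$ uniform over $K(\delta,e_0,M,T)$.

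Combining these, for every $e\in K(\delta,e_0,M,T)$ and $\tau\in[0,T_1]$ the integrand in \eqref{gamma} is bounded by
\begin{equation*}
\frac{|k_l u_x(\tau,e(\tau)-) - k_a u_x(\tau,e(\tau)+)|}{l\,w(\tau,e(\tau))} \leq \frac{(k_l+k_a)C_2}{l\,\delta_1} =: L,
\end{equation*}
so that $|\Gamma(e)(t)-e_0|\leq L t$. It then suffices to set $T_2:=\min\{T_1,\,m/L\}$: for $0\leq t\leq T_2$ one gets $|\Gamma(e)(t)-e_0|\leq L T_2\leq m$, whence $\delta\leq e_0-m\leq \Gamma(e)(t)\leq e_0+m\leq 1-\delta$, which is the assertion. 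Since $L$, $T_1$ and $m$ are all independent of the particular $e$, the same $T_2$ works for every $e\in K(\delta,e_0,M,T)$.

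The argument is a routine continuity/bounding estimate; the only point requiring genuine care is the uniformity of all constants over the admissible class $K(\delta,e_0,M,T)$. This is exactly why the preceding lemmas were stated with constants depending only on $\delta,M,T$ and the data: the bound $C_2$ from Lemma \ref{lemma100} and the time $T_1$ from Lemma \ref{lem3-18} are uniform, so no worst-case selection of $e$ is needed. I expect this uniformity bookkeeping, rather than any analytic difficulty, to be the main thing to verify.
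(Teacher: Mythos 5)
Your argument is correct and follows essentially the same route as the paper: fix $\delta$ so that $e_0$ sits strictly inside $[\delta,1-\delta]$ with a positive margin, bound the integrand of $\Gamma(e)$ by a constant using \eqref{maru} and the lower bound $w(\cdot,e(\cdot))\geq\delta_1$ from Lemma \ref{lem3-18}, and shrink $T_2$ so the linear drift $Lt$ stays within that margin. The paper's proof is identical in substance (with $2\delta\leq e_0\leq 1-2\delta$ playing the role of your margin $m$), and your added remarks on the uniformity of $T_1$ and $C_2$ over $K(\delta,e_0,M,T)$ are accurate.
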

\begin{proof}
First, we choose $\delta>0$ such that $2\delta \leq e_0 \leq 1-2\delta$. For any $e \in K(\delta, e_0, M,T)$, by applying Lemmas \ref{lemma100} and \ref{lem3-18}, we have
\begin{align*}
  &  |\Gamma (e)(t) - e_0|\\
&  = \left|  \int_0^t \frac{1}{l w(\tau, e(\tau)) } \left(   k_l u_x(\tau, e(\tau)-) - k_a u_x(\tau, e(\tau)+) \right) d\tau \right|\\
& \leq \frac{k^* C_1}{l \delta_1 } t \text{ for } 0 \leq t \leq T_1,
 \end{align*}
 where $k^* = \max \{k_l, k_a\}$ and  $\displaystyle C_{1}$ is a positive constant defined by \eqref{maru} in Lemma \ref{lemma100}.
Hence, by taking $0 < T_2 \leq T_1$ satisfying $ \frac{k^* C_1}{l \delta_1 }  T_2 \leq \delta$, we obtain
\[\delta  \leq \Gamma (e)(t) \leq 1-\delta \text{ for }  0 \leq t \leq T_2.\qedhere\]
\end{proof}
\begin{lemma}\label{lem3-20}
 Suppose the same assumption as in Lemma \ref{lem3-19}. Then, there exists $T_3$ in $(0,T_2]$ such that $\displaystyle \int^{T_3}_0 \left|\frac{d}{dt} \Gamma(e)(t)\right|^3 dt \leq M$ for $e\in K(\delta, e_0, M,T)$.
\end{lemma}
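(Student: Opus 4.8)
The plan is to differentiate the defining formula \eqref{gamma} for $\Gamma(e)$ and bound the resulting integrand pointwise by a constant that is uniform over the whole class $K(\delta, e_0, M, T)$. Indeed, from \eqref{gamma} we have, for a.e.\ $t$,
\begin{equation*}
\frac{d}{dt}\Gamma(e)(t) = \frac{1}{l\, w(t, e(t))}\bigl( k_l u_x(t, e(t)-) - k_a u_x(t, e(t)+)\bigr),
\end{equation*}
so that once the denominator is bounded away from $0$ and the numerator is bounded above, $\int_0^{T_3}|\tfrac{d}{dt}\Gamma(e)|^3\,dt$ is controlled simply by $T_3$ times a constant, and the conclusion follows by choosing $T_3$ small.

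First I would invoke Lemma \ref{lem3-18}: since $T_2 \leq T_1$, the lower bound $w(t,e(t)) \geq \delta_1$ holds on $[0,T_2]$, whence $\tfrac{1}{l\,w(t,e(t))} \leq \tfrac{1}{l\delta_1}$ there. Next I would use the uniform gradient estimate \eqref{maru} of Lemma \ref{lemma100}, which gives $|u_x(t,e(t)-)|, |u_x(t,e(t)+)| \leq C_2$ on $[0,T]$ with $C_2 = C_2(\delta, M, T, |u_0|_{W^{1,\infty}(0,1)})$. Combining these and writing $k^* = \max\{k_l, k_a\}$, I obtain for a.e.\ $t \in [0,T_2]$
\begin{equation*}
\left|\frac{d}{dt}\Gamma(e)(t)\right| \leq \frac{(k_l + k_a)\,C_2}{l\,\delta_1} \leq \frac{2 k^* C_2}{l\,\delta_1} =: C,
\end{equation*}
a constant independent of $t$ and of the particular choice of $e \in K(\delta, e_0, M, T)$. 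Consequently $\int_0^{T_3}|\tfrac{d}{dt}\Gamma(e)|^3\,dt \leq C^3 T_3$, and taking $T_3 = \min\{T_2,\, M/C^3\}$ yields $\int_0^{T_3}|\tfrac{d}{dt}\Gamma(e)|^3\,dt \leq M$.

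The point to be careful about—rather than a genuine obstacle—is the \emph{uniformity} of the two bounds over the class $K(\delta, e_0, M, T)$. I would stress that the constant $C_2$ from \eqref{maru} depends only on $\delta, M, T$ and $|u_0|_{W^{1,\infty}(0,1)}$, and that the threshold $\delta_1$ supplied by Lemma \ref{lem3-18} is likewise independent of the specific $e$; thus $C$, and hence the choice of $T_3$, can be fixed simultaneously for all admissible free boundaries. This uniformity is exactly what makes $\Gamma$ map $K(\delta, e_0, M, T_3)$ into itself, which is the property needed for the subsequent fixed-point argument in Section \ref{proofth1}.
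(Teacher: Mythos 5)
Your proof is correct and follows essentially the same route as the paper: both bound $\frac{d}{dt}\Gamma(e)$ pointwise by combining the lower bound $w(\cdot,e(\cdot))\geq \delta_1$ from Lemma \ref{lem3-18} with the uniform gradient estimate \eqref{maru} from Lemma \ref{lemma100}, and then choose $T_3$ small enough that the resulting linear-in-$T_3$ bound does not exceed $M$. Your added remarks on the uniformity of the constants over $K(\delta,e_0,M,T)$ are consistent with, and make explicit, what the paper uses implicitly.
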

\begin{proof}
By using Lemmas \ref{lemma100} and \ref{lem3-18}, we have
\begin{align*}
  &  \int^{t}_0 \left|\frac{d}{dt} \Gamma (e)(t)\right|^3 dt =\int^{t}_0 \left|\frac{1}{l w(\tau, e(\tau)) } \left(   k_l u_x(\tau, e(\tau)-) - k_a u_x(\tau, e(\tau)+) \right) \right|^3 d\tau \\
 & \leq \frac{1}{l^3 \delta_1^3 } \int^{t}_0 \bigg \{ \left|  k_l u_x(\tau, e(\tau)-)\right| + \left |k_a u_x(\tau, e(\tau)+) \right|\bigg \} ^3 d\tau \leq C_{1} t,
 \end{align*}
 where $\displaystyle C_{1}$ is a positive constant.
 By choosing small $T_3 \in (0, T_2]$ with $C_{1} T_3 \leq M$, we can prove this lemma.
\end{proof}
\begin{lemma}\label{lem3-26}
 Under the same assumption as in Lemma \ref{lem3-18}, there exists $\lambda \in (0,1)$ and $T_0 \in (0, T_3]$ such that 
\[\left|\Gamma(e_1)-\Gamma(e_2)\right|_{W^{1, 3}(0,T_0)} \leq \lambda \left|e_1-e_2\right|_{W^{1,3}(0,T_0)}.\]
\end{lemma}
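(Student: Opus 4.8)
The plan is to reduce the $W^{1,3}$-contraction estimate to an $L^3$-bound on the difference of the two integrands defining $\Gamma$, and then to check that every contribution carries a strictly positive power of $T_0$, so that the prefactor can be made $<1$. Writing $(u_i,w_i)$ for the solution of AP($e_i$) and
\[
g_i(t)=\frac{1}{l\,w_i(t,e_i(t))}\bigl(k_l u_{ix}(t,e_i(t)-)-k_a u_{ix}(t,e_i(t)+)\bigr),
\]
we have $\Gamma(e_i)(t)=e_0+\int_0^t g_i\,d\tau$ and $\frac{d}{dt}\Gamma(e_i)=g_i$. Since $\Gamma(e_1)(t)-\Gamma(e_2)(t)=\int_0^t(g_1-g_2)\,d\tau$, Hölder's inequality gives $\int_0^{T_0}|\Gamma(e_1)-\Gamma(e_2)|^3\,dt\leq T_0^3\int_0^{T_0}|g_1-g_2|^3\,dt$, so the whole matter comes down to bounding $\int_0^{T_0}|g_1-g_2|^3\,dt$ by $C\,T_0^{\alpha}\,|e_1-e_2|_{W^{1,3}(0,T_0)}^3$ with some $\alpha>0$.

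The next step is to decompose $g_1-g_2$. Using $w_i(t,e_i(t))\geq\delta_1$ from Lemma \ref{lem3-18} and the $L^\infty$-bound \eqref{maru} of Lemma \ref{lemma100} for $|k_l u_{2x}(t,e_2-)-k_a u_{2x}(t,e_2+)|$, the elementary identity $\frac{A_1}{W_1}-\frac{A_2}{W_2}=\frac{(A_1-A_2)W_2+A_2(W_2-W_1)}{W_1W_2}$ yields $|g_1-g_2|\leq C\bigl(|A_1-A_2|+|w_1(t,e_1(t))-w_2(t,e_2(t))|\bigr)$, where $A_i=k_l u_{ix}(t,e_i-)-k_a u_{ix}(t,e_i+)$. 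The first term is handled directly by \eqref{tono} of Lemma \ref{lemma101}, which gives $\int_0^{T_0}|A_1-A_2|^3\,dt\leq C\,T_0^{1/4}\int_0^{T_0}(|e_1-e_2|^3+|e_1'-e_2'|^3)\,d\tau$.

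The heart of the proof is the water term, which I split as
\[
w_1(t,e_1(t))-w_2(t,e_2(t))=\bigl[w_1(t,e_1(t))-w_2(t,e_1(t))\bigr]+\bigl[w_2(t,e_1(t))-w_2(t,e_2(t))\bigr].
\]
For the first bracket I apply the trace form of Lemma \ref{lemgag} to $z=(w_1-w_2)(t,\cdot)$ and then Lemma \ref{lem-w-2}, which bounds both $\sup_t|(w_1-w_2)(t)|_{L^2(0,1)}^2$ and $\int_0^{T_0}|(w_1-w_2)_x|_{L^2(0,1)}^2\,d\tau$ by $E:=\int_0^{T_0}(|e_1-e_2|^2+|e_1'-e_2'|^2)\,d\tau$; cubing, integrating in time, and using Hölder twice converts the resulting $E^{3/2}$ into $C\,T_0^{3/4}|e_1-e_2|_{W^{1,3}(0,T_0)}^3$ (here $\int_0^{T_0}|(w_1-w_2)_x|_{L^2}^{3/2}\,dt\leq T_0^{1/4}(\int_0^{T_0}|(w_1-w_2)_x|_{L^2}^2\,dt)^{3/4}$, and $E\leq 2T_0^{1/3}|e_1-e_2|_{W^{1,3}}^2$). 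For the second bracket — the \emph{same} solution evaluated at two different free boundaries — I use $|w_2(t,e_1(t))-w_2(t,e_2(t))|\leq |e_1(t)-e_2(t)|\,|w_{2x}(t)|_{L^\infty(0,1)}$. The point is that $|w_{2x}(t)|_{L^\infty}$ is finite because, by \eqref{EQa}, $|w_{2xx}(t)|_{L^2(0,1)}\leq C|w_{2t}(t)|_{L^2(0,1)}$ for a.e.\ $t$; applying Lemma \ref{lemgag} to $w_{2x}$ on each of $(0,e_2)$ and $(e_2,1)$ together with Lemma \ref{lem-w-1} (which controls $|w_{2x}(t)|_{L^2}$ uniformly and $\int_0^{T_0}|w_{2t}|_{L^2}^2$) shows $\int_0^{T_0}|w_{2x}(t)|_{L^\infty}^3\,dt\leq C$, whence this bracket is $\leq C\,|e_1-e_2|_{L^\infty(0,T_0)}^3\leq C\,T_0^2|e_1-e_2|_{W^{1,3}(0,T_0)}^3$.

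Collecting the three contributions, each is bounded by $C\,T_0^{\alpha}|e_1-e_2|_{W^{1,3}(0,T_0)}^3$ with $\alpha\geq 1/4$, so for $T_0\leq 1$ we obtain $|\Gamma(e_1)-\Gamma(e_2)|_{W^{1,3}(0,T_0)}\leq C^{1/3}T_0^{1/12}|e_1-e_2|_{W^{1,3}(0,T_0)}$; choosing $T_0\in(0,T_3]$ small enough that $\lambda:=C^{1/3}T_0^{1/12}<1$ finishes the proof. The main obstacle is exactly the second bracket: in contrast to the temperature, no $L^\infty$-bound on $w_x$ follows directly from the data (only $w_0\in W^{1,2}$), so one must route through the diffusion equation to recover sufficient time-integrability of $|w_{2x}(t)|_{L^\infty}$, and one must track all powers of $|e_1-e_2|$ carefully so as to land on the genuine cube of the $W^{1,3}$-norm with a vanishing prefactor rather than a fractional power of it.
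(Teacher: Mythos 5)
Your proposal is correct and follows essentially the same route as the paper: reduce to an $L^3$ bound on the difference of the integrands, treat the temperature-gradient difference by \eqref{tono} of Lemma \ref{lemma101}, split the water term as $[w_1(t,e_1)-w_2(t,e_1)]+[w_2(t,e_1)-w_2(t,e_2)]$ with Lemma \ref{lemgag} and Lemma \ref{lem-w-2} for the first bracket and an $L^\infty$-bound on $w_{2x}$ via $w_{2xx}\in L^2$ (from the diffusion equation and Lemma \ref{lem-w-1}) for the second, and then collect positive powers of $T_0$. The power counting and the identification of the boundary-shift term as the delicate point both match the paper's argument.
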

\begin{proof}
For $0 < T_4 \leq T_3$, we have
\begin{align*}
&   \left|  \frac{d}{dt}(\Gamma(e_1)-\Gamma(e_2))\right|_{L^3(0,T_4)} \\
& = \left| \frac{1}{l w_1(\cdot, e_1) } \left(  k_l u_{1x}(\cdot, e_1-) -  k_a u_{1x}(\cdot, e_1+) \right) \right.\\
&\quad \, \left. - \frac{1}{l w_2(\cdot, e_2) } \left(   k_l u_{2x}(\cdot, e_2-) -  k_a u_{2x}(\cdot, e_2+) \right) \right|_{L^3(0,T_4)}\\
&\leq \left|\frac{k_l}{l} \left( \frac{1}{w_1(\cdot, e_1)}\left(  u_{1x}(\cdot, e_1-)\right)  - \frac{1}{w_2(\cdot, e_2)}\left(  u_{2x}(\cdot, e_2-)\right) \right) \right|_{L^3(0,T_4)}\\
&\quad \, + \left|\frac{k_a}{l} \left( \frac{1}{w_1(\cdot, e_1)}\left(  u_{1x}(\cdot, e_1+)\right)  - \frac{1}{w_2(\cdot, e_2)}\left(  u_{2x}(\cdot, e_2+)\right) \right) \right|_{L^3(0,T_4)}\\
&=: I_{1} + I_{2} % \text{ a.e. on } [0, T_4].
 \end{align*}
 First, we estimate $I_{1}$. Easily, we have
\begin{align*}
 I_{1}& \leq \frac{k_l}{l}\left| \left(\frac{1}{w_1(\cdot, e_1)} - \frac{1}{w_2(\cdot, e_2)}\right)u_{1x}(\cdot, e_1-)\right|_{L^3(0,T_4)} \\
&\quad \, + \frac{k_l}{l} \left| \frac{1}{w_2(\cdot, e_2)} \left(u_{1x}(\cdot, e_1-) - u_{2x}(\cdot, e_2-)\right)\right|_{L^3(0,T_4)} \\
& =\frac{k_l}{l} (I_{11} + I_{12}).
\end{align*}
Let $\delta_1$ and $C_1$ be positive constants given in Lemma \ref{lem3-18} and \eqref{maru}, respectively. Then, we have
\begin{align*}
     I_{11} & \leq \frac{C_{1}}{\delta_1^2 } \Bigg\{ \left( \int^{T_4}_0   \left|{w}_1(t, e_1(t)) - {w}_2(t, e_1(t))\right|^3 dt \right)^{\frac{1}{3}}\\
     &\hspace{10mm} +  \left( \int^{T_4}_0   \left|{w}_2(t, e_1(t)) - {w}_2(t, e_2(t))\right|^3 dt \right)^{\frac{1}{3}} \Bigg\} =:I_{11}^{(1)} + I_{11}^{(2)}.
\end{align*}
By applying Lemmas \ref{lemgag} and \ref{lem-w-2}, we see that
\begin{align*}
\left(I_{11}^{(1)}\right)^3&\leq \Bigg (\frac{C_{1}}{\delta_1^2 }\Bigg)^3 \int^{T_4}_0   \left|{w}_1 - {w}_2\right|^3_{L^{\infty}(0,1)} dt \\
&\leq C_2 \int^{T_4}_0  \left( \left|{w}_1 - {w}_2\right|_{L^2(0,1)} +  \left|{w}_1 - {w}_2\right|_{L^2(0,1)}^{1/2} \left|{w}_{1x} - {w}_{2x}\right|_{L^2(0,1)}^{1/2}\right) dt \\
&\leq 8 C_2 \Bigg( \left|{w}_1 - {w}_2\right|_{L^{\infty}(0,T_4;L^2(0,1))}^{3} T_4\\
&\hspace{10mm} + \left|{w}_1 - {w}_2\right|_{L^{\infty}(0,T_4;L^2(0,1))}^{3/2} \int^{T_4}_0   \left|{w}_{1x} - {w}_{2x}\right|_{L^2(0,1)}^{3/2} dt\Bigg)\\
&\leq 8 C_2 \Bigg( \left|{w}_1 - {w}_2\right|_{L^{\infty}(0,T_4;L^2(0,1))}^{3} T_4\\
&\hspace{10mm} + \left|{w}_1 - {w}_2\right|_{L^{\infty}(0,T_4;L^2(0,1))}^{3/2}    \left|{w}_{1x} - {w}_{2x}\right|_{L^2(0,T_4;L^2(0,1))}^{3/2} T_4^{1/4}\Bigg)\\
&\leq C_3 T_4^{1/4} \Bigg( \int^{T_4}_0 \left(\left|e_{1} - e_{2}\right|^2 + \left|e'_{1} - e'_{2}\right|^2 \right) dt \Bigg)^{3/2}\\
&\leq C_3 T_4^{3/4} \int^{T_4}_0 \left(\left|e_{1} - e_{2}\right|^3 + \left|e'_{1} - e'_{2}\right|^3 \right) dt,
\end{align*}
and
\begin{align*}
I_{11}^{(1)} \leq C_3^{1/3} T_4^{1/4} \left(\left|e_{1} - e_{2}\right|_{L^3(0,T_4)}+ \left|e'_{1} - e'_{2}\right|_{L^3(0,T_4)} \right),
\end{align*}
where $C_2$ and $C_3$ are positive constants. Next, by applying Lemma \ref{lemgag}, again, in case $e_1(t) \leq e_2(t)$ for some $t \in [0,T]$,
\begin{align*}
&\left|{w}_2(t, e_1(t)) - {w}_2(t, e_2(t))\right|\\
&\leq \left|w_{2x}(t)\right|_{L^{\infty}(0,e_2(t))} \left|e_1(t) - e_2(t)\right|\\
&\leq C \left(\left|w_{2x}(t)\right|_{L^{2}(0,e_2(t))} + \left|w_{2x}(t)\right|_{L^{2}(0,e_2(t))}^{1/2}\left|w_{2xx}(t)\right|_{L^{2}(0,e_2(t))}^{1/2}\right) \left|e_1(t) - e_2(t)\right|\\
&\leq C \left(\left|w_{2x}(t)\right|_{L^{2}(0,1)} + \left|w_{2x}(t)\right|_{L^{2}(0,1)}^{1/2} \left(\left|w_{2xx}(t)\right|_{L^{2}(0,e_2(t))}^{1/2} + \left|w_{2xx}(t)\right|_{L^{2}(e_2(t),1)}^{1/2}\right)\right)\\
& \quad \, \times \left|e_1(t) - e_2(t)\right|.
\end{align*}
Even if $e_1(t) > e_2(t)$, this inequality still holds. Accordingly, it is clear that
\begin{samepage}
\begin{align*}
\left(I_{11}^{(2)}\right)^3&\leq C_4 \left|e_1(t) - e_2(t)\right|^3_{C([0,T_4])}  \int^{T_4}_0 \Big( \left|w_{2x}(t)\right|_{L^{2}(0,1)} \\
&\hspace{20mm}+ \left|w_{2x}(t)\right|_{L^{2}(0,1)}^{1/2}\left(\left|w_{2xx}(t)\right|_{L^{2}(0,e_2(t))}^{1/2} + \left|w_{2xx}(t)\right|_{L^{2}(e_2(t),1)}^{1/2}\right) \Big)^3 dt\\
&\leq 8 C_4 T_4^2 \left|e'_1 - e'_2\right|_{L^3(0,T_4)}^3 \Bigg\{  \left|w_{2x}\right|_{L^{\infty}(0,T_4;L^{2}(0,1))}^3 T_4\\
&\hspace{10mm}+ \left|w_{2x}\right|_{L^{\infty}(0,T_4;L^{2}(0,1))}^{3/2} \int^{T_4}_0 \left(\left|w_{2xx}(t)\right|_{L^{2}(0,e_2(t))}^{3/2} + \left|w_{2xx}(t)\right|_{L^{2}(e_2(t),1)}^{3/2}\right)dt \Bigg\} \\
&\leq  C_5 T_4^2 \left|e'_1 - e'_2\right|_{L^3(0,T_4)}^3,
\end{align*}
\end{samepage}
where $C_4$ and $C_5$ are positive constants. Namely, for some positive constant $C_6$ it holds that
\begin{align*}
I_{11} \leq C_6 T_4^{1/4}\left|e_1 - e_2\right|_{W^{1,3}(0,T_4)}.
\end{align*}
On $I_{12}$, by using \eqref{tono}, we have
\begin{align}
I_{12} &\leq  \frac{1}{\delta_1}  \left| u_{1x}(\cdot, e_1-) - u_{2x}(\cdot, e_2-)\right|_{L^3(0,T_4)}\notag \\
& =  \frac{1}{\delta_1}\left(\int^{T_4}_0 \left| u_{1x}(t, e_1(t)-) - u_{2x}(t, e_2(t)-)\right|^3 dt\right)^{\frac{1}{3}}\notag\\
 &\leq C_7 \left|e_1 - e_2\right|_{W^{1,3}(0,T_4)} T_4^{\frac{1}{12}}\label{tou},
 \end{align}
where $C_{7}$ is a positive constant.
Hence, we have
\[I_{1} \leq C_{8} T_4^{\frac{1}{12}} \left|e_1-e_2\right|_{W^{1,3}(0,T_4)} \text{ for } 0 \leq T_4 \leq T_3,\]
where $C_{8}$ is a positive constant. Similarly to $I_{1}$, we can get same estimate for $I_2$. Thus, we obtain
\[ \displaystyle \left|\frac{d}{dt}(\Gamma(e_1)-\Gamma(e_2))\right|_{L^3(0,T_4)} \leq C_{9} T_4^{\frac{1}{12}} \left|e_1-e_2\right|_{W^{1,3}(0,T_4)} \text{ for } 0 \leq T_4 \leq T_3,\]
where $C_{9}$ is a positive constant. By using above inequality, we have
 \begin{align*}
   \left|\Gamma(e_1)-\Gamma(e_2)\right|_{L^3(0,T_4)}
   & \leq T_4 \left| \frac{d}{d\tau}(\Gamma(e_1)-\Gamma(e_2))\right|_{L^3(0,T_4)} \\
   & \leq C_{10} T_4 \left|e_1-e_2\right|_{W^{1,3}(0,T_4)} \text{ for }  0 \leq T_4 \leq T_3,
 \end{align*}
 where $C_{10}$ is a positive constant. Therefore,  Lemma \ref{lem3-26} have been proved.
\end{proof}
Since Lemmas \ref{lem3-18} - \ref{lem3-26}  hold, we can choose small $T_0 \in (0,T]$ such that $\Gamma$ is a function $K(\delta, e_0, M$, $T_0)$ $\to K(\delta, e_0, M, T_0)$ and a contraction mapping on $W^{1,3}(0,T_0)$. Hence, there exists one and only one $e \in K(\delta, e_0, M, T_0)$ with $\Gamma(e) = e$ by applying  Banach's fixed-point theorem to $\Gamma$. Obviously, \eqref{FBP1} holds on $[0,T_0]$. Consequently, P has at least one solution $(e, u, w)$ on $[0,T_0]$ for some $T_0 \in (0,T]$. 
\par Next, we will prove the uniqueness of solutions  to P on $[0,T_0]$ for any $T_0 \in (0, T]$. For $0 < T_0 \leq T$ and $i=1,2$ let $(e_i, u_i, w_i)$ be solutions of P on $[0,T_0]$. Namely, by Definition~\ref{th1}, it holds 
\[
\begin{aligned}
&e_i \in W^{1, \infty}(0, T_0), \quad \delta  \leq  e_i \leq  1-\delta  \text{ on } [0, T_0],\\
&u_i, w_i \in W^{1,2}(0,T_0;L^2(0,1)) \cap L^{\infty}(0, T_0; W^{1,2}(0,1)),\\
&u_{ixx}, w_{ixx} \in L^2(Q_l(T_0, e_i)) \cap L^2(Q_a(T_0, e_i)),\\
&u_{ix}(\cdot, e_i\pm) \in L^{\infty}(0,T_0), \quad w_i(\cdot, e_i) \geq \delta_1 \text{ on } [0,T_0].
\end{aligned}
\]
%We have
Clearly, $\Gamma(e_i) = e_i$ on $[0,T_0]$ for $i=1,2$. On account of the proof of the estimates, we see that $e_1 = e_2 \text{ on } [0,T_*]$ for some $T_* \in (0,T]$, and the uniqueness of AP($e$) guaranteed by Lemma \ref{lem-w-3} implies that $u_1=u_2$ and $w_1=w_2$ on $(0,T_*) \times (0,1)$. By repeating the argument above finite times, we have proved the uniqueness of solutions to P.
\section{Proof of Theorem \ref{th2}}\label{sec4}
Throughout this section, we suppose that all the assumptions of Theorem \ref{th2} hold.
\begin{lemma}\label{lem500}
 Let $(e,u,w)$ be a solution of P on $[0,T]$. Then, it holds that $u + \theta \leq u_b$ on $(0,T)\times(0,1)$.
\end{lemma}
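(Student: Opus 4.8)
The plan is to prove this maximum-principle bound by an energy estimate for the positive part of the shifted temperature. Since $u_b$ is a constant, the function $v := u + \theta_c - u_b$ satisfies the same heat equations as $u$ in each phase, namely $c_l v_t = k_l v_{xx}$ in $Q_l(T,e)$ and $c_a v_t = k_a v_{xx}$ in $Q_a(T,e)$, and the assertion $u + \theta_c \le u_b$ is exactly $v \le 0$ on $Q(T)$, i.e. $v^+ \equiv 0$. First I would collect the sign information on the parabolic boundary. By \eqref{BCl} we have $u(t,e(t)) = 0$, hence $v(t,e(t)) = \theta_c - u_b$; since $u_0 \ge 0$ on $[e_0,1]$ and $u_b \ge u_0 + \theta_c$ on $[0,1]$ we get $u_b \ge \theta_c$, so $v(t,e(t)) \le 0$, that is $v^+(t,e(t)) = 0$. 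At $t=0$, $v(0,\cdot) = u_0 + \theta_c - u_b \le 0$ by hypothesis, so $v^+(0,\cdot)=0$. Finally $v_x(t,0) = u_x(t,0) = 0$ by \eqref{BCl}.

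Next I would test each phase equation against $v^+$ and integrate in space. On the crust, testing $c_a v_t = k_a v_{xx}$ against $v^+$ on $(e(t),1)$ and using the transport theorem gives $\frac{d}{dt}\frac{c_a}{2}\int_{e(t)}^1 (v^+)^2\,dx = c_a\int_{e(t)}^1 v^+ v_t\,dx - \frac{c_a}{2}e'(t)(v^+(t,e(t)))^2$, where the moving-boundary term drops because $v^+(t,e(t))=0$. Integrating the right-hand side by parts, the interface term at $x=e(t)$ vanishes for the same reason, so the jump of $u_x$ across the free boundary prescribed by \eqref{FBP1} plays no role, and one is left with the flux at $x=1$ together with $-k_a\int_{e(t)}^1 ((v^+)_x)^2\,dx \le 0$. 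The decisive sign check is at $x=1$: the condition \eqref{BCau} reads $-k_a v_x(t,1) = h\,v(t,1) + \sigma((v(t,1)+u_b)^4 - u_b^4)$, and since $r \mapsto h r + \sigma((r+u_b)^4 - u_b^4)$ is nondecreasing and vanishes at $r=0$, multiplication by $v^+(t,1)$ yields $k_a v_x(t,1)\,v^+(t,1) \le 0$. Hence $\frac{d}{dt}\int_{e(t)}^1 (v^+)^2\,dx \le 0$, and because this quantity is zero at $t=0$ it stays zero, so $v \le 0$ on $Q_a(T,e)$.

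I would then repeat the identical argument on the crumb region $(0,e(t))$: testing $c_l v_t = k_l v_{xx}$ against $v^+$, the boundary contributions vanish at $x=0$ (since $v_x(t,0)=0$) and at $x=e(t)$ (since $v^+(t,e(t))=0$), so $\frac{d}{dt}\frac{c_l}{2}\int_0^{e(t)}(v^+)^2\,dx = -k_l\int_0^{e(t)}((v^+)_x)^2\,dx \le 0$, and again the zero initial value forces $v^+ \equiv 0$ there. Combining the two regions gives $v \le 0$ on $Q(T)$, which is the claim.

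The main obstacle I anticipate is purely technical: justifying the differentiation under the moving boundary and the chain rule $\frac{\partial}{\partial t}\tfrac{1}{2}(v^+)^2 = v^+ v_t$ and $(v^+)_x = v_x\,\chi_{\{v>0\}}$ for the merely Lipschitz truncation $v^+$, using only the regularity granted by (S1)--(S4), namely $e \in W^{1,\infty}(0,T)$, $v \in W^{1,2}(0,T;L^2(0,1)) \cap L^\infty(0,T;W^{1,2}(0,1))$ and $v_{xx} \in L^2$ on each phase. This is handled by the standard Stampacchia truncation together with the absolute continuity of the energy along the moving interface; the evaluation $v(t,e(t)) = \theta_c - u_b$ is legitimate because $v(t,\cdot)$ is continuous in $x$. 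Everything else reduces to the two sign observations above, the vanishing of $v^+$ on $x=e(t)$ and the monotonicity of the Robin flux at $x=1$.
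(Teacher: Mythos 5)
Your proposal is correct and follows essentially the same route as the paper: testing with the positive part $[u+\theta_c-u_b]^+$, observing that it vanishes at $x=e(t)$ because $u(t,e(t))=0$ and $u_b\geq\theta_c$, using the monotonicity of the radiative--convective condition at $x=1$, and closing with the zero initial value. The only cosmetic difference is that you track the moving-boundary transport terms on each phase separately (they vanish anyway), whereas the paper works directly with $\int_0^1 u_t\,[u+\theta_c-u_b]^+\,dx$ over the fixed interval.
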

\begin{proof}
First, \Aref{2} and the assumption $u_b \geq u_0 + \theta_c$ on $[0,1]$ suggests that $u_0(e_0) = 0$ and $u_b \geq \theta_c$. Accordingly, for any $t \in [0, T]$, by multiplying $[u(t) + \theta_c -u_b]^+$ on both sides of  \eqref{EQl} and
integrating it with $x$ on $[0, e(t)]$, $[e(t), 1]$, respectively, we have
\begin{align*}
\int^e_0 c_l u_t  [u + \theta_c - u_b]^+ dx &= \int^e_0 k_l u_{xx}  [u + \theta_c - u_b]^+ dx\\
&=- \int^e_0 k_l u_{x}  ([u + \theta_c - u_b]^+)_x dx\\
&=- \int^e_0 k_l   \left|([u + \theta_c - u_b]^+)_x\right|^2 dx\\
&\leq 0 \text{ a.e. on } [0,T],
\end{align*}
and from the monotonicity of  the boundary condition it follows 
\begin{align*}
&\int^1_e c_a u_t  [u + \theta_c - u_b]^+ dx\\
 &= \int^1_e k_a u_{xx}  [u + \theta_c - u_b]^+ dx\\
&=-(h( u(\cdot,1) + \theta_c - u_b) + \sigma( (u(\cdot,1) + \theta_c)^4 - u_b^4)) [u(\cdot,1) + \theta_c - u_b]^+\\
&\quad \, - \int^1_e k_a u_{x}  ([u + \theta_c - u_b]^+)_x dx\\
&\leq - \int^1_e k_a   \left|([u + \theta_c - u_b]^+)_x\right|^2 dx\\
&\leq 0 \text{ a.e. on } [0,T].
\end{align*}
Thus, we obtain
\begin{align*} 
\int^1_0 u_t  [u + \theta - u_b]^+ dx \leq 0 \text{ a.e. on } [0,T].
\end{align*}
Since $u_b$ is a constant, we see that
\begin{align*} 
\frac{1}{2} \frac{d}{dt} \int^1_0  \left| [u + \theta_c - u_b]^+\right|^2 dx \leq 0 \text{ a.e. on } [0,T].
\end{align*}
The assumption $u_b \geq u_0 + \theta$ on $[0,1]$ implies
\begin{align*} 
 \int^1_0  \left| [u(t) + \theta_c - u_b]^+\right|^2 dx &\leq  \int^1_0  \left| [u_0 + \theta_c - u_b]^+\right|^2 dx\\
&=0 \text{ on } [0,T].
\end{align*}
This shows that Lemma \ref{lem500} holds.
\end{proof}
\begin{lemma}\label{lem5}
 Let $(e,u,w)$ be a solution of P on $[0,T]$. Then there exists a positive constant $\delta_w $ such that $w \geq \delta_w$ on $(0,T)\times(0,1)$.
\end{lemma}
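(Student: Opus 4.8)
The plan is to establish a minimum principle for $w$ with the explicit bound $\delta_w := \min_{[0,1]} w_0$, which is strictly positive because $w_0 > 0$ on the compact interval $[0,1]$ and $w_0 \in W^{1,2}(0,1) \subset C([0,1])$. The idea is to test the two diffusion equations in \eqref{EQa} against the truncation $\phi := -[w - \delta_w]^- = \min(w - \delta_w, 0)$, which vanishes wherever $w \geq \delta_w$, is continuous across $x=e(t)$ (since $w(t,\cdot)\in W^{1,2}(0,1)$), and satisfies $\phi_x = w_x\,\mathbf{1}_{\{w<\delta_w\}}$ a.e.

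The first key step is to pin down the sign of the flux at $x=1$. By Lemma \ref{lem500} we have $u + \theta_c \leq u_b$, so in particular $u(t,1) \leq u_b - \theta_c \leq u_b$; since $p$ is nondecreasing and nonnegative by (A1), since $u_b$ is the positive constant from the hypotheses, and since $b_1 \leq b_2$, the boundary condition \eqref{BCaw} gives
\[
-d_a \frac{\partial w}{\partial x}(t,1) = b_1 p(u(t,1)) - b_2 p(u_b) \leq (b_1 - b_2)\,p(u_b) \leq 0,
\]
whence $w_x(t,1) \geq 0$.

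Next I would differentiate the energy: $\frac{d}{dt}\frac12\int_0^1 \phi^2\,dx = \int_0^1 \phi\, w_t\,dx$ (no Leibniz term appears because the full interval $[0,1]$ is fixed, and $\phi\phi_t = \phi w_t$ a.e. since $\phi_t=w_t$ on $\{w<\delta_w\}$ and $\phi=0$ elsewhere). Splitting the integral at $x=e(t)$, substituting $w_t=d_l w_{xx}$ on $(0,e)$ and $w_t=d_a w_{xx}$ on $(e,1)$, and integrating by parts on each subinterval, the boundary term at $x=0$ drops by \eqref{FBP2}; the two interface contributions combine into $\phi(t,e)\,(d_l w_x(t,e-) - d_a w_x(t,e+))$, which vanishes by the transmission condition \eqref{FBP2}; the remaining boundary term $d_a\,\phi(t,1)\,w_x(t,1)$ is $\leq 0$ because $\phi\leq 0$ and $w_x(t,1)\geq 0$; and the interior terms $-d_l\int_0^e \phi_x w_x\,dx - d_a\int_e^1 \phi_x w_x\,dx$ are $\leq 0$ since $\phi_x w_x = (w_x)^2\,\mathbf{1}_{\{w<\delta_w\}}\geq 0$. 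Thus $\frac{d}{dt}\int_0^1 \phi^2\,dx \leq 0$, and as $\phi(0,\cdot)\equiv 0$ (because $w_0\geq\delta_w$) we conclude $\phi\equiv 0$, i.e. $w\geq\delta_w$ on $[0,T]\times[0,1]$.

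The integrations by parts are legitimate for a.e.\ $t$ because (S4) yields $w(t,\cdot)\in W^{2,2}(0,e(t))\cap W^{2,2}(e(t),1)$ with well-defined one-sided traces of $w_x$, and $t\mapsto\int_0^1\phi^2\,dx$ is absolutely continuous since $w\in W^{1,2}(0,T;L^2(0,1))$. I expect the only genuinely delicate points — and the sole places where the sign hypotheses $b_1\leq b_2$, $u_b$ constant and $u_b\geq u_0+\theta_c$ are used — to be the flux sign at $x=1$ and the exact cancellation of the interface terms through \eqref{FBP2}; once these are secured the remainder is a routine energy estimate.
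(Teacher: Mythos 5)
Your proof is correct and follows essentially the same route as the paper: the paper tests \eqref{EQa} with $[-w+\delta_w]^+$ (which is just $-\phi$ in your notation), takes $\delta_w$ as a positive lower bound for $w_0$, removes the interface and $x=0$ boundary terms via \eqref{FBP2}, and makes the flux term at $x=1$ nonpositive using $b_1\leq b_2$, Lemma \ref{lem500} and the monotonicity of $p$, exactly as you do. Your write-up is, if anything, more explicit than the paper's about the interface cancellation and the chain-rule/trace justifications, but there is no substantive difference in approach.
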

\begin{proof}
By the assumption $w_0 > 0$ on $[0,1]$,  we can choose $\delta_w$ such that $w_0(x) \geq \delta_w$ for $x \in [0,1]$.
For any $t \in [0, T]$ we multiply $[-w(t) + \delta_w]^+$ on both sides of  \eqref{EQa} and
integrate it with $x$ on $[0, e(t)]$, $[e(t), 1]$, respectively. By integration by parts, we have
\begin{align*} 
\int^1_0 w_t  [-w + \delta_w]^+ dx&= -\int^e_0 d_l w_x  ([-w + \delta_w]^+)_x dx - \int^1_e d_a w_x  ([-w + \delta_w]^+)_x dx\\
&\quad \, + d_a w_x(\cdot,1)[-w(\cdot,1) + \delta_w]^+\\
&= \int^e_0 d_l  \left| ([-w + \delta_w]^+)_x\right|^2 dx + \int^1_e d_a \left|  ([-w + \delta_w]^+)_x\right|^2 dx\\
&\quad \, - \{b_1p(u(\cdot,1) + \theta_c) -  b_2 p(u_b)\}[-w(\cdot,1) + \delta_w]^+   \text{ a.e. on } [0,T].
\end{align*}
 Moreover, by the assumption $b_1 \leq b_2$, Lemma \ref{lem500} and the monotonicity of $p$,  it follows that
 \begin{samepage}
  
\begin{align*} 
\frac{1}{2}\frac{d}{dt} \int^1_0 \left| [-w + \delta_w]^+\right|^2 dx &\leq  \{b_1p(u(\cdot,1) + \theta_c) -  b_2 p(u_b)\}[-w(\cdot,1) + \delta_w]^+\\
&\leq 0 \text{ a.e. on } [0,T].
\end{align*}
 \end{samepage}

Thanks to  $w_0(x) \geq \delta_w$ for $x \in [0,1]$, we have
\begin{align*} 
 \int^1_0 \left| [-w + \delta_w]^+\right|^2 dx %&\leq  \int^1_0 \left| [\hat{w}_0 + C]^+\right|^2 dx\\
 &\leq \int^1_0 \left| [-w_0 + \delta_w]^+\right|^2 dx\\
  &\leq 0 \quad \ \text{on } [0,T].
\end{align*}
 Thus, Lemma \ref{lem5} has been proved.
\end{proof}
\begin{lemma}\label{lem6}
 Let $[0, T^*)$ be the maximal interval of existence of the solution $(u, w, e)$ to P. Then, there exists a positive constant $C_1$ such that 
\begin{equation}
\int^t_0 \left|u_t\right|^2_{L^2(0,1)}d\tau + \left|u(t)\right|_{W^{1,2}(0,1)}^2 + \int^t_0 \left|e'\right|^3 d\tau \leq C_1 \text{ for } t \in [0,T^*).
\end{equation}

\end{lemma}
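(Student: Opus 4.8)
The plan is an energy argument driven by the two pointwise bounds already available. First I would record that $u$ is bounded in $L^\infty((0,T^*)\times(0,1))$: the upper bound $u\le u_b-\theta_c$ is Lemma~\ref{lem500}, Lemma~\ref{prop3-1} gives $u\le 0$ in the crumb and $u\ge 0$ in the crust, and a maximum-principle comparison in the crumb supplies a lower bound in terms of $u_0$. Second, Lemma~\ref{lem5} gives $w\ge\delta_w>0$ everywhere. Since $u_b$ is constant, the flux $g$ in \eqref{BCau} is time-independent, so writing $\hat g(r)=\int_0^r g(\xi)\,d\xi$ we have $-g(u(\cdot,1))\,u_t(\cdot,1)=-\frac{d}{dt}\hat g(u(\cdot,1))$, and $\hat g(u(\cdot,1))$ is bounded by the $L^\infty$ bound on $u$. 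Note also that $|u(t)|_{L^2(0,1)}$ is then automatically controlled.

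Next I would multiply \eqref{EQl} and \eqref{EQa} by $u_t$ and integrate over $(0,e(t))$ and $(e(t),1)$ respectively. After integration by parts, using $u_x(t,0)=0$, the boundary condition \eqref{BCau}, the relation $u_t(t,e(t)\pm)=-u_x(t,e(t)\pm)e'(t)$ coming from $\frac{d}{dt}u(t,e(t))=0$, and the Reynolds rule for the moving endpoint, the boundary contributions combine into
\[ \int_0^1 c\,u_t^2\,dx+\frac{d}{dt}\Big(\tfrac{k_l}{2}\!\int_0^{e}u_x^2\,dx+\tfrac{k_a}{2}\!\int_{e}^1 u_x^2\,dx+\hat g(u(\cdot,1))\Big)=\frac{e'}{2}\big(k_a u_x(\cdot,e+)^2-k_l u_x(\cdot,e-)^2\big), \]
with $c=c_l$ on $(0,e)$ and $c=c_a$ on $(e,1)$. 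I will denote the parenthesized energy by $E(t)$ and the right-hand side by $F(t)$.

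The crux is $F(t)$, which is cubic in the traces $u_x(\cdot,e\pm)$. From the Stefan condition \eqref{FBP1} and $w\ge\delta_w$ one has $|e'|\le (l\delta_w)^{-1}|k_l u_x(\cdot,e-)-k_a u_x(\cdot,e+)|$, so both $|e'|^3$ and $|F|$ are dominated by $C(|u_x(\cdot,e-)|^3+|u_x(\cdot,e+)|^3)$. To estimate these traces I would use Lemma~\ref{lemgag} — its second inequality on $(0,e)$, where $u_x(\cdot,0)=0$, and its first on $(e,1)$ — together with $u_{xx}=(c/k)u_t$ from \eqref{EQl}--\eqref{EQa}, exactly in the spirit of \eqref{tono}, obtaining $|u_x(\cdot,e\pm)|^3\le C(|u_t|_{L^2}^{3/2}|u_x|_{L^2}^{3/2}+|u_x|_{L^2}^{3})$; the $|u_t|_{L^2}^{3/2}$-part is then absorbed into $\tfrac12\int_0^1 c\,u_t^2$ by Young's inequality. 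The main obstacle is that the surviving right-hand side is \emph{super-linear} in $E$, so a bare Gronwall argument would close only locally in time and would re-introduce the very quantity $\int_0^t|e'|^3$ that we are trying to bound (this is why a circular appeal to \eqref{maru}, whose constant depends on that bound, must be avoided). To break the circularity I would invoke the $L^\infty$-estimate for $u_x$ of \cite{LSU}, available because $u$ is bounded, which controls $|u_x(\cdot,e\pm)|$ directly and thereby keeps every constant in the estimate of $F$ independent of $\int_0^t|e'|^3$.

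Finally I would integrate the differential inequality in $t$ and apply Gronwall's inequality to bound $\int_0^t\int_0^1 c\,u_t^2\,dxd\tau+E(t)$, hence $|u(t)|_{W^{1,2}(0,1)}^2$, by a constant $C_1$ on $[0,T^*)$ (depending on the data, on $\delta_w$, and on $T^*$). The bound on $\int_0^t|e'|^3$ then follows by integrating $|e'|^3\le C(|u_x(\cdot,e-)|^3+|u_x(\cdot,e+)|^3)$ and using the flux estimate from the previous step. I expect the delicate point throughout to be precisely this simultaneous, non-circular control of the free-boundary flux and of $e'$, balancing the Stefan condition, the lower bound $w\ge\delta_w$, the Gagliardo--Nirenberg trace estimate, and the \cite{LSU} gradient bound.
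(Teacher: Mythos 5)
There is a genuine gap, and it lies exactly where you locate the ``crux'': the treatment of the free-boundary term $F(t)$. The paper does not estimate $F(t)$ at all --- it shows that, with the right normalization, this term has a \emph{favorable sign} and in fact produces the $\int_0^t|e'|^3$ bound for free. Concretely, one multiplies the two equations in \eqref{EQl} by $k_lu_t$ and $k_au_t$ (not by $u_t$ alone, as you do), so that the moving-boundary contribution becomes $\frac{e'}{2}\bigl(k_l^2u_x(\cdot,e-)^2-k_a^2u_x(\cdot,e+)^2\bigr)=\frac{e'}{2}\bigl(k_lu_x(\cdot,e-)-k_au_x(\cdot,e+)\bigr)\bigl(k_lu_x(\cdot,e-)+k_au_x(\cdot,e+)\bigr)$; the first factor equals $lw(\cdot,e)e'$ by the Stefan condition \eqref{FBP1}, and since $u\le 0$ in $Q_l$, $u\ge 0$ in $Q_a$ (Lemma \ref{prop3-1}) and $u(t,e(t))=0$, both traces $u_x(\cdot,e\pm)$ are nonnegative, whence $k_lu_x(-)+k_au_x(+)\ge|k_lu_x(-)-k_au_x(+)|=lw|e'|$ and the whole term is bounded below by $\frac{|e'|^3}{2}l^2w(\cdot,e)^2\ge\frac{l^2\delta_w^2}{2}|e'|^3$. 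Together with the fact that $u_b$ is constant (so the boundary term at $x=1$ is an exact time derivative of a function bounded below), this yields $C_*\int_0^1|u_t|^2dx+\frac{l^2\delta_w^2}{2}|e'|^3+\frac{d}{dt}E_1\le 0$, and a single integration finishes the proof --- no Gronwall, no trace interpolation, no gradient bound is needed. With your normalization (multiplying by $u_t$ only) the term $\frac{e'}{2}(k_au_x(+)^2-k_lu_x(-)^2)$ does not factor through \eqref{FBP1}, so this mechanism is not even available to you.

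Your proposed escape from the circularity --- invoking the $L^\infty$ gradient estimate of \cite{LSU} ``independently of $\int_0^t|e'|^3$'' --- is not justified. In this paper the only such estimate is \eqref{maru}, whose constant $C_2$ is derived for $e\in K(\delta,e_0,M,T)$ and depends on $M$, i.e.\ precisely on a presupposed bound for $\int_0^{T}|e'|^3$; a gradient bound up to a moving boundary cannot be obtained without some a priori control of the boundary's motion, which is the quantity being estimated. Your fallback via Lemma \ref{lemgag} fares no better by your own admission: absorbing $|u_t|_{L^2}^{3/2}$ by Young's inequality leaves a term of order $|u_x|_{L^2}^{6}$, i.e.\ a superlinear Gronwall inequality that closes only locally and cannot reach $T^*$. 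The correct observations you do make (boundedness of $u$ via Lemma \ref{lem500}, $w\ge\delta_w$ via Lemma \ref{lem5}, the exact-derivative structure of the $x=1$ boundary term) are all used by the paper, but without the sign/factorization argument for $F(t)$ the proof does not go through.
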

\begin{proof}
 Multiply $k_l u_t$, $k_a u_t$ on both sides of the first equation of \eqref{EQl}, the second one and integrate it with $x$ on $[0, e]$, $[e, 1]$, respectively. Then, similarly to the proof of Lemma 4.4 in \cite{AK}, we can obtain
\begin{align*} 
& C_* \int^1_0  \left|u_t\right|^2 dx + \frac{k_l^2}{2} \frac{d}{dt} \int^e_0\left|u_x\right|^2 dx   + \frac{k_a^2}{2} \frac{d}{dt} \int^1_e\left|u_x\right|^2 dx \\
&\quad \, + \frac{k_l^2}{2} u_x(\cdot, e-)^2 e' - \frac{k_a^2}{2} u_x(\cdot, e+)^2 e'\\
&\quad \,  + k_a  (h( u(\cdot,1) + \theta_c - u_b) + \sigma( (u(\cdot,1) + \theta_c)^4 - u_b^4))u_t(\cdot, 1)\\
&\leq 0 \text{ a.e. on } [0,T^*),
\end{align*}
where $C_* = \min \{ c_l k_l, c_a k_a \}$. We note that $u_t(\cdot,1)$ is well-defined by Proposition \ref{prop1}. Easily, we get
\begin{align*} 
 \frac{k_l^2}{2} u_x(\cdot, e-)^2 e' - \frac{k_a^2}{2} u_x(\cdot, e+)^2 e' &= \frac{\left|e'\right|^2}{2}(k_l  u_x(\cdot, e-) + k_a  u_x(\cdot, e+) )l w(\cdot, e)\notag\\
 &=: I_1 \text{ a.e. on } [0,T^*).
\end{align*}
If $e'(t) > 0$ for some $t \in [0,T^*)$, then $k_l  u_x(t, e(t)) > k_a  u_x(t, e(t))$ implies that \[\left|e'(t)\right| = k_l  u_x(t, e(t)-) - k_a  u_x(t, e(t)+).\] Hence, we have
\begin{samepage}
  \begin{align*}
I_1(t) &\geq  \frac{\left|e'(t)\right|^2}{2}(k_l  u_x(t, e(t)-) + k_a  u_x(t, e(t)+) )l w(t, e(t))\\
&\geq  \frac{\left|e'(t)\right|^2}{2}(k_l  u_x(t, e(t)-) - k_a  u_x(t, e(t)+) )l w(t, e(t))\\
&= \frac{\left|e'(t)\right|^3}{2}l^2 w(t, e(t))^2.% \text{ a.e. on } [0,T].
\end{align*}
\end{samepage}

Here, we note that $u_x(t,e(t)) \geq 0$ because of $u(t,e(t)) = 0$ and $u(t,x) \geq 0$ for all $x \in [e(t),1]$.
In case $e'(t) \leq 0$  for some $t \in [0,T^*)$, we can get the same inequality. Also, we obtain
\begin{align*}
& (h( u(\cdot,1) + \theta_c - u_b) + \sigma( (u(\cdot,1) + \theta_c)^4 - u_b^4))u_t(\cdot, 1)\\
&=\frac{d}{dt}G + hu'_bu(\cdot, 1) + 4\sigma u_b^3 u'_b u(\cdot, 1) \text{ a.e. on } [0,T^*),
\end{align*}
where
\begin{align*}
G &= \frac{h}{2}(u(\cdot,1) + \theta_c)^2 - h u_b u(\cdot,1) +  \frac{\sigma}{5}(u(\cdot,1) + \theta_c)^5 - \sigma u_b^4 u(\cdot,1) \text{ on } [0,T^*).
\end{align*}
As given in Lemma \ref{prop3-1}, $u(\cdot,1) \geq 0$ on $[0,T^*)$. Accordingly, by applying Young's inequality, we have
\begin{align*}
G &=  \frac{h}{2}(u(\cdot,1) + \theta_c)^2 - h u_b (u(\cdot,1) + \theta_c) + h u_b  \theta_c \\
&\quad \,  +  \frac{\sigma}{5}(u(\cdot,1) + \theta_c)^5 - \sigma u_b^4 (u(\cdot,1) + \theta_c) +  \sigma u_b^4\theta_c\\
&\geq  \frac{h}{2}(u(\cdot,1) + \theta_c)^2 - h u_b (u(\cdot,1) + \theta_c) - \sigma u_b^4 (u(\cdot,1) + \theta_c)\\
&\geq  \frac{h}{4}(u(\cdot,1) + \theta_c)^2 - 2h u_b^2 - \frac{2}{h}\sigma^2 u_b^8\\
&\geq -C_b \text{ a.e. on } [0,T^*),
\end{align*}
where $\displaystyle C_b = 2h u_b^2 - \frac{2}{h}\sigma^2 u_b^8$. %Thus, we obtain
By putting
\begin{align*} 
E_1 = \frac{k_l^2}{2} \int^e_0\left|u_x\right|^2 dx   + \frac{k_a^2}{2} \int^1_e\left|u_x\right|^2 dx + k_a(G + C_b) \text{ on } [0,T^*),
\end{align*}
we have
\begin{align*} 
& C_* \int^1_0  \left|u_t\right|^2 dx +  \frac{\left|e'\right|^3}{2}l^2 w(\cdot, e)^2+  \frac{d}{dt}E_1 \leq 0 \text{ a.e. on } [0,T^*).
\end{align*}
Therefore, we have
\begin{align*} 
 C_* \int^t_0 \int^1_0  \left|u_t\right|^2 dxd\tau +  \frac{l^2}{2}\int^t_0 \left|e'\right|^3 w(\cdot, e)^2 d\tau + E_1(t) \leq E_1(0) \text{ for } t \in [0,T^*).
\end{align*}
Thus, we conclude that Lemma \ref{lem6} holds.
\end{proof}
\begin{lemma}\label{lem8}
 Let $T^* < \infty$ and $[0, T^*)$ be the maximal interval of existence of the solution $(u, w, e)$ to P. Suppose that neither (b) nor (c) does not hold. Then, there exists $\delta_e > 0$ such that
 $\delta_e \leq e \leq 1- \delta_e$ on $[0, T^*)$.
\end{lemma}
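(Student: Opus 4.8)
The plan is to exploit the uniform $L^3$-bound on the free-boundary velocity furnished by Lemma \ref{lem6} to show that $e$ extends continuously up to $t = T^*$, and then to rule out the two boundary values $0$ and $1$ for its limit using the hypothesis that neither (b) nor (c) holds.

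First I would invoke Lemma \ref{lem6}, which provides a constant $C_1 > 0$, independent of $t$, with $\int_0^t |e'|^3 d\tau \leq C_1$ for every $t \in [0, T^*)$. By Hölder's inequality this yields, for $0 \leq s \leq t < T^*$,
\begin{equation*}
|e(t) - e(s)| \leq \left( \int_s^t |e'|^3 d\tau \right)^{1/3} (t - s)^{2/3} \leq C_1^{1/3}(t - s)^{2/3}.
\end{equation*}
Since $\int_0^{T^*} |e'|^3 d\tau \leq C_1 < \infty$, the tail $\int_s^t |e'|^3 d\tau$ tends to $0$ as $s, t \uparrow T^*$, so $\{e(t)\}$ satisfies the Cauchy criterion and the limit $e^* := \lim_{t \uparrow T^*} e(t)$ exists. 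Consequently $e$ extends to a continuous function $\bar e$ on the compact interval $[0, T^*]$ with $\bar e(T^*) = e^*$.

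Next I would identify $e^*$. By (S1) we have $0 < e(t) < 1$ for all $t \in [0, T^*)$, hence $e^* \in [0,1]$. Because $T^* < \infty$ is assumed and the limit has been shown to exist, condition (b) is equivalent to $e^* = 0$ and condition (c) to $e^* = 1$; thus the failure of (b) forces $e^* \neq 0$ and the failure of (c) forces $e^* \neq 1$, so that $0 < e^* < 1$. Therefore $\bar e$ is a continuous function on $[0, T^*]$ taking values in the open interval $(0,1)$. Setting $m = \min_{[0,T^*]} \bar e$ and $M = \max_{[0,T^*]} \bar e$, compactness gives $m > 0$ and $M < 1$, so with $\delta_e := \min\{m,\, 1 - M\} > 0$ we obtain $\delta_e \leq e \leq 1 - \delta_e$ on $[0, T^*)$.

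The argument is essentially a continuation-and-compactness argument, and no genuinely hard estimate remains once Lemma \ref{lem6} is in hand; the only point requiring care is the passage to the limit at $T^*$. The key observation is that the integrability $e' \in L^3(0,T^*)$ is precisely what guarantees the uniform Hölder continuity of $e$ up to $T^*$, and hence the existence of the limit $e^*$, which is what converts the qualitative dichotomy ``neither (b) nor (c)'' into the quantitative two-sided bound.
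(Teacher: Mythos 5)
Your proof is correct, and it rests on exactly the same key ingredient as the paper's: the uniform bound $\int_0^t |e'|^3\,d\tau \leq C_1$ from Lemma \ref{lem6} together with H\"older's inequality, which yields the $2/3$-H\"older estimate $|e(t)-e(s)| \leq C_1^{1/3}(t-s)^{2/3}$. Where you differ is in the endgame. You first conclude from this estimate that $e$ is uniformly continuous on $[0,T^*)$, hence that $e^* := \lim_{t\uparrow T^*}e(t)$ exists; since $T^*<\infty$, the failure of (b) and (c) then directly pins $e^*$ in the open interval $(0,1)$, and compactness of $[0,T^*]$ finishes the argument. The paper instead does not invoke the existence of the limit at this stage: it extracts from the negation of (b) and (c) sequences $t_{1n}, t_{2n}\uparrow T^*$ with $e(t_{1n})\geq\varepsilon_0$ and $e(t_{2n})\leq 1-\varepsilon_0$, uses the intermediate value theorem to find $t_n$ with $\varepsilon_0^* \leq e(t_n)\leq 1-\varepsilon_0^*$, and then propagates this two-sided bound up to $T^*$ via the same H\"older estimate (equicontinuity near $T^*$), treating $[0,t_{n_0}]$ separately by compactness. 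Your organization is cleaner and slightly stronger in that it already delivers the convergence $e(t)\to e_*$, which the paper only establishes afterwards in Lemma \ref{lem7} using the very same Cauchy estimate; the paper's sequence-extraction route buys nothing extra here, so your version is a legitimate and arguably preferable rearrangement of the same proof.
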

\begin{proof}
From the assumption, there exist $\varepsilon_0 > 0$ and sequences $\{ t_{1n} \}$ and  $\{ t_{2n} \}$ such that $t_{1n} > T_* - \frac{1}{n}$, $t_{2n} > T_* - \frac{1}{n}$, $e(t_{1n}) \geq \varepsilon_0$ and $e(t_{2n}) < 1 - \varepsilon_0$ for any $n \in \mathbb{N}$.
Because of $e \in C([0,T^*))$, we can choose $t_n \in (t_{1n}, t_{2n}) \text{ or } (t_{2n}, t_{1n})$ such that
$\varepsilon_0^* \leq e(t_n) \leq 1- \varepsilon_0^*$, where $\varepsilon_0^* = \min \{ \frac{1}{4}, \varepsilon_0 \}$.
By Lemma \ref{lem6}, for $t_n < t < T^*$, we have
\begin{align}
|e(t) - e(t_n)| &\leq \int^t_{t_n}|e'|dt \leq C_1 (t-t_n)^{\frac{2}{3}}\label{cauchy},
\end{align}
where $C_1$ is a positive constant defined by Lemma \ref{lem6}. 
\par Thus, there exists $\delta_1 > 0$ such that for any $n \in \mathbb{N}$ and $t$ with $0 < t - t_n < \delta_1$, it holds $|e(t) - e(t_n)|  < \frac{\varepsilon_0^*}{2}$.
Hence, we have $e(t) \geq \frac{\varepsilon_0^*}{2}$ for $t_n< t < \min\{T^*, t_n + \delta_1\}$ and any $n \in \mathbb{N}$. Moreover, we can choose $n_0 \in \mathbb{N}$ such that $|t_n - T^*| < \delta_1$ for $n \geq n_0$. Accordingly, for $T^* > t \geq t_{n0}$, $e(t) \geq \frac{\varepsilon_0^*}{2}$ holds.  Since $e(t) > 0$ for $0 \leq t \leq t_{n0}$, there exists $\varepsilon_1 > 0$ such that $e \geq \varepsilon_1$ on $[0,t_{n0}]$. Let $\delta_e^{(1)} = \min \{  \frac{\varepsilon_0^*}{2}, \varepsilon_1 \}$. Then, it follows that
\[(t) \geq \delta_e^{(1)} \text{ for } 0 \leq t < T^*.\] 
Similarly, we obtain 
\[ e \leq 1 - \delta_e \text{ on } \left[0,T^*\right) \text{ for some }  \delta_e^{(2)} > 0.\]
Thus, Lemma \ref{lem8} has been proved.
\end{proof}
\begin{lemma}\label{lem7}
 Under the same assumption as in Lemma \ref{lem8}, there exist
 \[
 \begin{aligned}
 u_* &\in W^{1,2}(0,1),
 &w_* &\in W^{1,2}(0,1),
 &e_* &\in (0,1),
 &\varepsilon' &> 0,
 \end{aligned}
 \]
 such that
 \[
 \begin{aligned}
 \varepsilon' &\leq e_* \leq 1-\varepsilon',&&\\
 u_* &\in W^{2,2}(1-\varepsilon',1),
 &u_* \geq 0 \text{ on } [e_*,1],  & \quad u_* \leq 0 \text{ on } [0,e_*],\\
 -k_a u_{* x}(1) &= g(T^*, u_{*}(1)),
 &w_* \geq \delta_w \text{ on } [0,1],
 &\\
 u(t) &\to u_* \text{ in } L^2(0,1)
 & \text{weakly in } W^{2,2}(1-\varepsilon',1),
 &\\
 w(t) &\to w_* \text{ in } L^2(0,1),
 &e(t) \to e_* \text{ in } \mathbb{R} \text{ as } t \uparrow T^*.&
 \end{aligned}
\]
\end{lemma}
\begin{proof}
First, by Lemma \ref{lem6}, we see that
\begin{align*} 
\left|u(t) - u(t')\right|_{L^2(0,1)} &\leq \int^t_{t'} \left|u_{\tau}\right|_{L^2(0,1)} d\tau \\
&\leq C'(t-t')^{1/2} \text{ for } 0 < t' < t < T^*,
\end{align*}
where $C'$ is a positive constant. This shows that $\{ u(t) \}_{t \uparrow T^*}$ is a Cauchy sequence in $L^2(0,1)$. Similarly, \eqref{cauchy} implies that $\{ e(t) \}_{t \uparrow T^*}$ is also a Cauchy sequence in $\mathbb{R}$. Hence, there exist $e_* \in \mathbb{R}$, $\varepsilon > 0$ and $u_* \in L^2(0,1)$ such that
\begin{align*} 
e(t) \to e_* \text{ in } \mathbb{R} \text{ as } t \uparrow T^*, \quad \varepsilon \leq e_* \leq 1-\varepsilon,
\end{align*}
and
\begin{align*} 
u(t) \to u_* \text{ in } L^2(0,1) \text{ as } t \uparrow T^*.
\end{align*}
Here, we note that $\varepsilon > 0$, since neither (b) nor (c) does not occur.
Moreover, thanks to Lemmas \ref{lem2} and \ref{lem4}, we observe that  $\{ u(t)| 0 \leq t < T^* \}$ is bounded in $W^{1,2}(0,1)$ and $\varepsilon' \leq e \leq 1-\varepsilon'$ on $[0,T^*)$ for some $\varepsilon' > 0$. Hence, by applying Lemma \ref{lem4} and $u_{xx} = \frac{c_a}{k_a}u_t$,  we observe that $\{ u(t)| 0 \leq t < T^* \}$ is also bounded in $W^{2,2}(1-\varepsilon',1)$. Immediately, it holds that
\begin{align*} 
u(t) \to u_* \text{ in } C([0,1]),  \text{ and weakly in } W^{1,2}(0,1) \text{ and } W^{2,2}(1-\varepsilon',1) \text{ as } t \uparrow T^*.
\end{align*}
\par Next, by Lemma \ref{prop3-1}, we see that $u(t) \geq 0$ on $[e(t),1]$ for $t \in [0,T^*)$. Hence, it is easy to obtain $u_* \geq 0$ on $[e_*, 1]$. Similarly, we can prove $u_* \leq 0$ on $[0, e_*]$.
\par From now on, we show that $w_* \geq \delta_w$ on $[0,1]$. First, Lemma \ref{lem-w-1} guarantees that   $\{ w(t) \}_{t \uparrow T^*}$ is a Cauchy sequence in  $L^2(0,1)$. Accordingly, there exists $w_* \in L^2(0,1)$ such that
\begin{align*} 
w(t) \to w_* \text{ in } L^2(0,1) \text{ as } t \uparrow T^*.
\end{align*}
Moreover, the boundedness of  $\{ w(t)| 0 \leq t < T^* \}$ in $W^{1,2}(0,1)$ implies that
\begin{align*} 
w(t) \to w_* \text{ in } C([0,1]) \text{ and weakly in } W^{1,2}(0,1) \text{ as } t \uparrow T^*.
\end{align*}
 Since $w(t) \geq \delta_w$ on $(0,T^*) \times (0,1)$ by Lemma \ref{lem5}, we can show $w_* \geq \delta_w$ on $[0,1]$.
\par Finally, we show $-k_a u_{*  x}(1) = g(T^*, u_{*}(1))$. By Lemma \ref{lem4}, $\{ u_x(t)| 0 \leq t < T^* \}$ is bounded in $W^{1,2}(1-\varepsilon',1)$, $\{ u_x(t_n) \}$ is bounded in $W^{1,2}(1-\varepsilon',1)$. Hence, 
\begin{align*} 
u_x(t) \to u_{*x} \text{ in } C([1-\varepsilon',1]) \text{ as } t \uparrow T^*.
\end{align*}
It is also clear that
\begin{align} 
u(t) \to u_* \text{ in } C([1-\varepsilon',1]) \text{ as } t \uparrow T^*.\label{con8}
\end{align}
By using  \eqref{con8}, we obtain $-k_a u_{*  x}(1) = g(T^*, u_{*}(1))$.
\end{proof}
\begin{proof}[Proof of Theorem \ref{th2}]
Suppose that $T^* < \infty$ and either (b) or (c) does not hold. Then, by Lemma \ref{lem7}, there exists a triplet $(e_*, u_*, w_*)$ satisfying \Aref{1}--\Aref{5} as the initial time $T^*$. Consequently, Theorem \ref{th1} implies existence of a unique solution $(\hat{e}, \hat{u}, \hat{w})$ on $[T^*, \hat{T}]$ for some $\hat{T} > T^*$. Immediately, we can extend the solution $(e,u,w)$ beyond $T^*$ by using $(\hat{e}, \hat{u}, \hat{w})$. This contradicts the definition of $T^* < \infty$. Hence, $T^* = \infty$. This completes the proof of Theorem \ref{th2}.
\end{proof}

\section{Conclusion}
In this paper, we have established existence and uniqueness of a strong solution under high regularity for the initial data and shown the behavior of the free boundary. For future works, we consider the behavior of the free boundary. As a first step, we will prove that (c) in Theorem \ref{th2} does not occur.

{\small
    
}

\end{document}